\newcommand{\mc}[1]{\mathcal{ #1}}
\newcommand{\algf}{\mathbf{F}}
\newcommand{\ra}{\Rightarrow}
\newcommand{\algp}{\mathbf P}
\newcommand{\alga}{\mathbf A}
\newcommand{\algb}{\mathbf B}
\newcommand{\algc}{\mathbf C}
\newcommand{\algd}{\mathbf D}
\newcommand{\SPO}{\mathcal{SPO}}
\newcommand{\K}{\mathbf{K}}
\newcommand{\C}[1]{\mathrm{C}_{#1}}
\newtheorem{theorem}{Theorem}[section]
\newtheorem{definition}[theorem]{Definition}
\newtheorem{lemma}[theorem]{Lemma}
\newtheorem{proposition}[theorem]{Proposition}
\newtheorem{remark}[theorem]{Remark}
\newtheorem{example}[theorem]{Example}
\newtheorem{corollary}[theorem]{Corollary}
\title{On Contextuality and Unsharp Quantum Logic}
\date{}
\author{Davide~Fazio and Raffaele~Mascella}
\email{dfazio2@unite.it, rmascella@unite.it}
\address{\small{Dipartimento di Scienze della Comunicazione, Universit\`a degli Studi di Teramo, Campus ``Aurelio Saliceti'', Via R. Balzarini, 1, 64100, Teramo (TE), Italy}.}
\begin{document}
\maketitle
\begin{abstract}
In this paper we provide a preliminary investigation of subclasses of bounded posets with antitone involution which are ``pastings'' of their maximal Kleene sub-lattices. Specifically, we introduce super-paraorthomodular lattices, namely paraothomodular lattices whose order determines, and it is fully determined by, the order of their maximal Kleene sub-algebras. It will turn out that the (spectral) paraorthomodular lattice of effects over a separable Hilbert space can be considered as a prominent example of such. Therefore, it arguably provides an algebraic/order theoretical rendering of complementarity phenomena between \emph{unsharp} observables. A number of examples, properties and characterization theorems for structures we deal with will be outlined. For example, we prove a forbidden configuration theorem and we investigate the notion of commutativity for modular pseudo-Kleene lattices, examples of which are (spectral) paraorthomodular lattices of effects over \emph{finite-dimensional} Hilbert spaces. Finally, we show that structures introduced in this paper yield paraconsistent partial referential matrices, the latter being generalizations of J. Czelakowski's partial referential matrices. As a consequence, a link between some classes of posets with antitone involution and algebras of partial ``unsharp'' propositions is established.   
\end{abstract}

\textbf{Keywords:} effect algebras, super-paraorthomodular lattices, spectral lattice, pastings of Kleene lattices, partial referential matrices, forbidden configurations.
\section{Introduction}\label{sec:introduction}
G. Birkhoff and J. von Neumann' seminal paper \cite{BV36} recognized that projection operators over a Hilbert space, which may be regarded as yes-no answers to questions concerning measures of physical magnitudes over a quantum system, form an orthocomplemented bounded lattice that, unlike Boolean algebras, need not be distributive. Structures arising from the Birkhoff-von Neumann's approach, later called \emph{orthomodular lattices} (OMLs) \cite{Husimi}, have become the core of the logico-algebraic foundation of quantum mechanics (QM). This stream of research has provided in-depth analysis of logical novelties put in place by QM (think e.g. to Heisenberg's uncertainty principle, see \cite{Mittelstaedt} for an account) as well as solutions to, and explanations of, theoretical difficulties arising from the probabilistic interpretation of quantum theory. In fact, any (non-Boolean) OML $\alga$ provides a logical rendering of complementarity phenomena between observables, as $\alga$ can be regarded as a ``pasting'' of its maximal Boolean ``contexts''/sub-algebras (called \emph{blocks}) which, in turn, play the role of sub-algebras of propositions on measurements over observables. Furthermore, the order relation induced by lattice operations of $\alga$ determines, and it is fully determined by, the order relation induced by its Boolean sub-algebras. Once the order is interpreted as \emph{entailment}, it becomes clear that OMLs provide a logic which is locally (i.e. in a given context/with respect to a given observable) classical although, in general, it need not be so, once contexts and propositions over non-commuting observables are taken as a whole. In other words, OMLs framework provides a formal, logical account of the fact that ``Quantum Mechanics does not admit global, \emph{non-contextual} value attributions to \emph{all} physical quantities at once'' (cf. \cite{Bush2}).

In spite of its intuitive appeal, some doubts concerning the actual adequacy of OMLs' framework to deal with novelties put in place by QT arose in the history of its development quite soon. For at least two reasons. On the one hand, it was discovered that ``concrete'' orthocomplemented lattices of projection operators over separable Hilbert spaces do not generate the whole variety of OMLs (see e.g. \cite[p. 189]{DallaChiara}). Therefore, orthomodular lattices' theory seemed ``too broad'' for dealing with peculiarities of QT. On the other hand, OMLs are \emph{algebras}, i.e. they are endowed with everywhere defined operations. Thus, once regarded as the algebraic semantics of the logic of quantum experimental propositions, OMLs seemed not cope with the idea that conjunctions of quantum experimental propositions need not be defined (even be meaningful) unless they concern with \emph{commeasurable} observables. Thorough discussions on the topic can be found e.g. in \cite{DallaChiara,Suppes}.\\
In order to cope, at least partially, with the above theoretical difficulties, orthomodular posets (OMPs) were introduced. An OMP is nothing but an orthoposet (see e.g. \cite[p. 13]{DallaChiara} and \cite{FaLePa2}) in which the existence of joins is ensured at least for finite sets of mutually orthogonal elements. Moreover, comparable elements satisfy the orthomodular law (see below). Prominent examples of OMPs arise in Mackey's axiomatic foundation of orthodox QT (see e.g. \cite{MK}) as well as in seemingly far apart contexts (e.g. in the realm of transition systems \cite{Bernardinello}). 

The approach outlined above takes the cue from the idea that projection operators over a Hilbert space can be regarded as quantum experimental propositions concerning measurements of physical quantities. However, these measurements are assumed to fulfill heavy requirements. In fact, they are \emph{ideal measurements of first kind} or L\"uders measurements, i.e. they are \emph{repeatable} (first kind) and test nothing but the property they are supposed to (ideal). See e.g. \cite{DallaChiara} for an account. However, ``these \emph{idealizing} and \emph{simplifying assumptions} of considering only \emph{L\"uders measurements} lead to conflict with both \emph{experimental possibilities} and, even
worse, to \emph{theoretical inconsistencies}'' (\cite[p. 932]{GG05}). As a consequence, a generalized notion of a measurement in quantum mechanics has been introduced (see e.g. \cite{Ludwig}). Such a notion brings with it a generalized concept of measurable properties, called \emph{unsharp properties}, whose formal counterparts are \emph{effects}. An effect over a separable Hilbert space $\mathcal{H}$ is nothing but a bounded self-adjoint linear operator $A$ satisfying the following condition, for any $\rho\in\mathbf{D}(\mathcal{H})$, where $\mathbf{D}(\mathcal{H})$ is the set of \emph{density operators} over $\mathcal{H}$: 
$$0\leq\mathrm{Tr}(\rho A)\leq 1.$$
Upon denoting by $\mathcal{E}(\mathcal{H})$ the set of effects over $\mathcal{H}$, it is well known that  $\mathcal{E}(\mathcal{H})$ carries a quite natural (\emph{canonical}) order $\leq_{c}$, upon defining, for any $A,B\in \mathcal{E}(\mathcal{H})$:\[A\leq_{c} B\text{ iff for any }\rho\in\mathbf{D}(\mathcal{H}),\ \mathrm{Tr}(\rho A)\leq\mathrm{Tr}(\rho B).\]
Moreover, ($\mathcal{E}(\mathcal{H}), \leq_{c})$ has $\mathbb{O}$ resp. $\mathbb{I}$ (the null resp. the identity operator) as its least resp. top element. So, it can be regarded as a bounded poset.\\
Over the past years, unsharp properties have been the core of a large number of investigations both from a mathematical (see e.g. \cite{Kadison}) and philosophical perspective. For example, they are a central concept of the \emph{Unsharp Quantum Reality}'s theory (UQR), first introduced by P. Bush and G. Jaeger in \cite{Bush2}, where $\mathcal{E}(\mathcal{H})$ is considered as comprising ``the totality of the ways in which the system may act on its environment, specifically in measure-like interactions'' \cite[p. 1353]{Bush2}. \\    
Addressing a question posed by P. Bush in \cite{Bush1} concerning the possibility of providing a logical structure for unsharp properties, R. Giuntini and H. Greuling \cite{GG05} (and, in an independent manner, by \cite{FB}) introduced the concept of an \emph{effect algebra}. Effect algebras are partial algebras endowed with a (partial) commutative binary operation $\oplus$, indeed, a generalize orthogonal join, and a total unary operation ${}^{\perp}$, which find a ``concrete'' counterpart in the structure $$\mathbf{E}_{\mathcal{H}}=(\mathcal{E}(\mathcal{H}),\oplus,{}^{\perp},\mathbb{O},\mathbb{I}),$$ where, for any $A,B\in\mathcal{E}(\mathcal{H})$, $A^{\perp}=\mathbb{I}-A$, and $A\oplus B$ is defined, and equals $A+B$, provided that $A\leq_{c}B^{\perp}$.
Although their definition looks quite unimpressive, effect algebras enjoy interesting features which have been the subject of deep investigations. For an extensive account, the reader is referred to Dvurec\v enskij and Pulmannov\'a's monograph \cite{DP00}.\\
Among problems arising in effect algebras' theory, the question if effect algebras are indeed unions of their blocks, under a suitable definition of a block, has been one of the most fascinating challenges addressed by quantum logicians over the past years. However, to the best of our knowledge, the literature on the subject still offers partial solutions only. In \cite{Genca}, J. Gen\v{c}a proved that any \emph{homogeneous} effect algebra $\alga$ is the union of its blocks, where, by a block, it is meant a maximal sub-effect algebra with the \emph{Riesz decomposition property}. Moreover, in analogy with orthomodular lattices, every finite compatible set of a homogeneous effect algebra is contained in a block (cf. \cite[Corollary 3.14]{Genca}). As a consequence, since comparable elements are compatible, any pair of comparable elements is contained in a block. Therefore, the order relation induced by homogeneous effect algebras determines, and it is determined by, the order relation induced by its blocks. These results generalize analogous achievements already obtained for orthoalgebras \cite{Navara1} and lattice-effect algebras (see below and \cite{Riecanova1}). Unfortunately, although interesting from a mathematical perspective, homogeneous effect algebras' theory falls short of capturing salient traits of effects over Hilbert spaces, since it can be seen that, for a Hilbert space $\mathcal{H}$ with $\mathrm{dim}(\mathcal{H}) > 1$, $\mathbf{E}_{\mathcal{H}}$ is not homogeneous.\\
It is important to observe that effect algebras are, in general,  non lattice-ordered. In fact, any lattice-ordered effect algebra (i.e. whose underlying order is a lattice) are homogeneous (see \cite[Proposition 2.2]{Genca}). So, in view of the above discourse, examples of lattice-effect algebras (which are not orthomodular lattices) of a particular physical significance seem hard to be found.

Taking advantage of Olson's celebrated results \cite{Olson} (see also \cite{DeGroote}), R. Giuntini, A. Ledda, and F. Paoli introduced \emph{paraorthomodular lattices}, i.e.   non-distributive generalizations of Kalman's Kleene lattices \cite{Kalman, Cignoli, Cignoli1, Cignoli2} satisfying the paraorthomodular law, as a general framework which lends effects be amenable of smooth lattice theoretical investigations. In fact, it turns out that, for any separable Hilbert space $\mathcal{H}$, $\mathcal{E}(\mathcal{H})$ gives rise to a paraorthomodular lattice by  means of the so-called \emph{spectral ordering} which is, in turn, a generalization of the (lattice) ordering on closed subspaces of $\mathcal{H}$. 

A (bounded) spectral family on a separable Hilbert space $\mc H$ with set $\Pi(\mc H)$ of projection operators is a map $M: \mathbb{R}\rightarrow \Pi(\mc H)$ such that:
\begin{itemize}
\item[a.] For any $\lambda,\mu\in \mathbb{R}$, if $\lambda\leq\mu$, then $M(\lambda)\leq M(\mu)$ (monotonicity); 
\item[b.] For any $\lambda\in\mathbb{R}$, $M(\lambda)=\bigwedge_{\mu > \lambda} M(\mu)$ (right continuity);
\item[c.] There exist $\lambda,\mu\in\mathbb{R}$ ($\lambda \leq\mu$) such that, for any $\eta\in\mathbb{R}$, one has
\[
M(\eta)=\begin{cases} \mathbb{O}, & \mbox{if}\ \eta <\lambda \\ \mathbb{I}, & \mbox{if}\ \eta\geq\mu.
\end{cases}
\]
\end{itemize}
For any bounded self adjoint linear operator $A$ of $\mc H$, there exists a \emph{unique} spectral family $A_{(\cdot)}:\mathbb{R}\to\Pi(\mathcal{H})$ such that $$A=\int_{-\infty}^{\infty}x dA_{x},$$ where the integral is meant in the sense of Riemann-Stieltjes norm-converging sums (see \cite[Chap. 1]{Str01}). Moreover, any spectral family determines a \emph{unique} bounded self-adjoint operator according to the previous equality,. Now, we can introduce the spectral ordering on the set of effects $\mc E(\mc H)$ of $\mc H$ upon defining, for any $E,F\in \mc E(\mc H)$,  
\[
E\leq_{s} F\ \ \mbox{iff}\ \ {F}_{\lambda}\leq {E}_{\lambda},\ \mbox{for}\ \mbox{any}\ \lambda\in\mathbb{R}.\tag{SO}
\]
It can be proven that $\leq_{c}\ \subseteq\ \leq_{s}$. However, $\leq_{s}$ and $\leq_{c}$ do not share some important features. For example, $\leq_{s}$ does not satisfy the following property (see \cite{Olson}): $$E\leq  F\ \mbox{iff}\ F-E\geq  \mathbb{O}.$$ 
It has been shown by \cite{Olson} and \cite{DeGroote} that $\leq_{s}$ turns $\mathcal{E}(\mathcal{H})$ into a (boundedly) complete bounded lattice $(\mathcal{E}(\mathcal{H}),\land_{s},\lor_{s},\mathbb{O},\mathbb{I})$, with bottom resp. top element $\mathbb{O}$ resp. $\mathbb{I}$, upon defining lattice operations over $\mathcal{E}(\mathcal{H})$ componentwise as follows, for any $\lambda\in\mathbb{R}$ (see \cite[p.8]{DeGroote}): 
\begin{enumerate}
\item  $(A\lor_{s}B)_{\lambda}:=A_{\lambda}\land B_{\lambda}$;
\item $(A\land_{s}B)_{\lambda}:=\bigwedge_{\mu>\lambda}(A_{\mu}\lor B_{\mu})$.
\end{enumerate}

Therefore, under the spectral ordering, the whole set of effects is amenable of lattice theoretical analysis. Furthermore, it has been noticed in \cite[p. 13]{DeGroote} that, upon setting, for any $A\in\mathcal{E}(\mathcal{H})$, $A':=\mathbb{I}-A$, one has that $$\mathbf{E}(\mathcal{H})=(\mathcal{E}(\mathcal{H}),\land_{s},\lor_{s},{}',\mathbb{O},\mathbb{I})$$ is a pseudo-Kleene lattice satisfying the \emph{paraorthomodular law} (see below). Moreover, if $\mathcal{H}$ is finite-dimensional, $\mathbf{E}(\mathcal{H})$ is also modular (see Lemma \ref{lem:finitedimmodular}).  Note that, for any $A\in\mathcal{E}(\mathcal{H})$ and $\lambda\in\mathbb{R}$,\[A'_{\lambda}=(\mathbb{I}-A)_{\lambda}= \mathbb{I}-{\bigvee_{\mu< 1-\lambda}B_{\mu}}.\]
The interested reader is referred to \cite[p. 14]{DeGroote} for details. Also, we observe that the complete orthomodular lattice
\[\mathbf{P}(\mathcal{H})=(\Pi(\mathcal{H}),\land,\lor,',\mathbb{O},\mathbb{I})\] of projection  operators over $\mathcal{H}$ is a complete sub-algebra of $\mathbf{E}(\mathcal{H})$ (cf. \cite[Corollary 3.1]{DeGroote}).

In this paper we address the problem of characterizing generalizations of paraorthomodular lattices, here called \emph{unsharp orthogonal posets}, whose induced order determines, and it is determined by, the order of their Kleene-sublattices just as the order of an arbitrary OMP $\alga$ is completely inherited from its Boolean sub-algebras. In this venue, we confine ourselves to pseudo-Kleene lattices, by showing that the class of pseudo-Kleene lattices which are ``pastings'' of their Kleene blocks, here called \emph{super paraorthomodular lattices}, can be axiomatized by means of a rather streamlined equation. We provide several characterizations of the super paraorthomodular law of both order-theoretical and algebraic concern. It will turn out that $\mathbf{E}(\mathcal{H})$ is indeed super-paraorthomodular. As a consequence, $\mathbf{E}(\mathcal{H})$ provides a logico-algebraic account of complementarity phenomena between unsharp observables just as OMLs do in the sharp case. Since the class of OMLs form a sub-variety of super paraorthomodular lattices, we will generalize a number of results which hold for the latter to the wider framework of pseudo-Kleene lattices. For example, we provide a Dedekind-type (forbidden configuration) theorem generalizing well known results for orthomodular posets/lattices, as well as we introduce a notion of commutativity for modular pseudo-Kleene lattices. As it will be clear, unlike plain paraorthomodular lattices, super-paraorthomodular lattices theory captures important features of $\mathbf{E}(\mathcal{H})$. As an instance, it will be shown that sharp elements always form an orthomodular poset.\\
Taking advantage of well known J. Czelakowski's results \cite{Czela1981b}, in the last part of the paper we introduce paraconsistent partial referential matrices for the formal treatment of (partial) experimental quantum propositions taking truth values over the three element Kleene chain, which are \emph{meaningful}/\emph{defined} for a certain state of a physical system, and undefined otherwise. We show that any unsharp orthogonal poset $\alga$ always yield the algebraic reduct of a paraconsistent partial referential matrix. Moreover, if $\alga$ is tame, then it is also \emph{isomorphic} to an algebra of partial propositions. Consequently, due to our characterization results, a novel interpretation of the ``logic'' of super-paraorthomodular lattices, and so of pseudo-Kleene (spectral) lattices of effects over Hilbert spaces, obtains. 

The paper is organized as follows. In Section \ref{sec:preliminaries} we dispatch basic definitions and facts that will be employed in the sequel. Section \ref{sp-orthomodular} introduces super-paraorthomodular lattices. In the same venue, we show that, indeed, super-paraorthomodular lattices coincide with pseudo-Kleene lattices which are ``pastings'' of their maximal Kleene sub-lattices. Also, examples of super-paraorthomodular lattices of particular relevance for logic and the foundation of physics will be exhibited. Section \ref{sec:forbconfandcomm} is devoted to provide a forbidden configuration theorem for super-paraorthomodular lattices and basics of commutativity theory for modular pseudo-Kleene lattices. In Section \ref{sec:paraconspartrefmatr} we introduce paraconsistent partial referential matrices and we provide a rapresentation theorem for tame structures. We conclude in Section \ref{sec:conclusion}. 
\section{Preliminaries and basic facts}\label{sec:preliminaries}
In the sequel, we will assume the reader to be familiar with basics of universal algebra and lattice theory. We refer to \cite{BS} resp. \cite{Gratzer} for details. 

Let $\alga=(A,\land,\lor)$ be a lattice. Moreover, let $\{a,b\}$ be such that $A\cap\{a,b\}=\emptyset$. By $a\oplus\alga\oplus b$ we denote the bounded lattice $(A\cup\{a,b\},\land,\lor)$ such that, for any $x,y\in A\cup\{a,b\}$:\\

\begin{minipage}{3cm}
\begin{equation}
    x\lor y =
    \begin{cases}
      x\lor^{\alga}y & \text{if } x,y\in A \\
      b        &  \text{if }x=b\text{ or }y=b\\
      x        &  \text{if }y=a\\
      y        &  \text{if }x=a
    \end{cases}
\end{equation} 
\end{minipage}\hspace{5cm}
\begin{minipage}{3cm}
\begin{equation}
    x\land y =
    \begin{cases}
      x\land^{\alga}y & \text{if } x,y\in A \\
      a        &  \text{if }x=a\text{ or }y=a\\
      x        &  \text{if }y=b\\
      y        &  \text{if }x=b
    \end{cases}
\end{equation} 
\end{minipage}	\quad\\

The following remark is in order.
\begin{remark}\label{rem:ordsum}Let $\alga$ be a lattice and let $\{a,b\}$ be such that $\{a,b\}\cap A=\emptyset$. Then $\alga$ is distributive if and only if $a\oplus\alga\oplus b$ is.
\end{remark}
Given a (bounded) lattice-ordered algebra $\alga$, we will denote by $\alga^{\ell}$ its (bounded) lattice-reduct.

Let us introduce the main concepts we are deailing with in this paper.
\begin{definition}A \emph{bounded poset with antitone involution} is a structure $\alga=(A,\leq,{}',0,1)$ such that 
\begin{enumerate}
\item $(A,\leq,0,1)$ is a poset with bottom resp. top element $0$ resp. $1$;
\item ${}':A\to A$ is an antitone involution, namely, for any $x,y\in A$, the following hold:
\[x\leq y\text{ implies }y'\leq x',\text{ and }x''=x.\] 
\end{enumerate}
 \end{definition}
\begin{definition}An \emph{unsharp orthogonal poset} (\emph{UOP}, for short) is a structure $\alga=(A,\leq,{}',0,1)$ such that
\begin{enumerate}
\item $(A,\leq,',0,1)$ is a bounded poset with antitone involution;
\item For any $x,y\in A$:
\begin{enumerate}[a.]
\item If $x\leq y'$, then the l.u.b. $x\lor y$ of $x$ and $y$ exists in $\alga$;
\item $x\land x'\leq y\lor y'$ (\emph{Kleene condition}). 
\end{enumerate}
\end{enumerate}
\end{definition}
Given an UOP $\alga$, $a\in A$ is said to be \emph{sharp} provided that $a\land a' = 0$ (and so $a\lor a'=1$). Henceforth, the set of sharp elements of an UOP $\alga$ will be denoted by $\mathrm{Sh}(\alga)$.

Among prominent examples of UOPs already introduced in the literature, fuzzy quantum posets are some of the most studied ones (see e.g. \cite{DvurChov, LeBaLong,LeBaLong1}).

Let $\Omega$ be a set. A \emph{fuzzy set} over $\Omega$ is a map $a:\Omega\to[0,1]$. Let us denote by $F(\Omega)$ the family of fuzzy sets over $\Omega$. As customary, the (countable) union, intersection, and complementation operations over $F(\Omega)$ are defined as follows:
\[(\bigcup_{i=1}^{\infty}a_{i})(x)=\mathrm{sup}\{a_{i}(x):1\leq i<\infty\},\ (\bigcap_{i=1}^{\infty}a_{i})(x)=\mathrm{inf}\{a_{i}(x):1\leq i<\infty\},\ a^{c}(x)=1-a(x).\]
Let $a,b\in F(\Omega)$. We say that 
\begin{itemize}
\item $a$ is \emph{orthogonal} to $b$ ($a\perp b$) provided that $a+b(x)\leq 1$, or, equivalently, $a(x)\leq b^{c}(x)$ (for any $x\in \Omega$);
\item $a$ is \emph{fuzzy orthogonal} to $b$ ($a\perp_{F}b$) provided that $a\cap b(x)\leq\frac{1}{2}$ (for any $x\in \Omega$).
\end{itemize}
 
\begin{definition}Let $\Omega$ be a non empty set and let $M\subseteq [0,1]^{\Omega}$ be such that:
\begin{enumerate}
\item If $\mathbf{1}(x)=1$, for any $x\in\Omega$, then $\mathbf{1}\in M$:
\item If $\mathbf{\frac{1}{2}}(x)=\frac{1}{2}$. for any $x\in\Omega$, then $\mathbf{\frac{1}{2}}\notin M$
\item If $a\in M$, then $a^{c}\in M$.
\end{enumerate}
A couple $(\Omega, M)$ is called a type I (type II) \emph{fuzzy quantum poset} (FQP, for
short), if $M$ is closed with respect to the union of any sequence of fuzzy sets, which are mutually fuzzy orthogonal (orthogonal). 
\end{definition}
\begin{remark}Let $\Omega$ be a non-empty set. Any type II FQP $(\Omega,M)$ is an unsharp orthogonal poset once considered as a structure $(M,\leq,{}^{c},\mathbf{0},\mathbf{1})$ where $\leq$ is fuzzy sets inclusion and $\mathbf{0}=\mathbf{1}^{c}$. Moreover, the same holds for type I FQPs upon noticing that $\perp\subseteq\perp_{F}$.
\end{remark}
  
An UOP $\alga=(A,\leq,{}',0,1)$ such that ${}'$ is a \emph{complementation} is called an \emph{orthogonal poset} \cite{Chajdaorthogonal} (OP, in brief). In other words, an OP $\alga$ is nothing but a UOP such that $\mathrm{Sh}(\alga)=A$. A lattice-ordered UOP (OP) is called a \emph{pseudo-Kleene lattice} (\emph{ortholattice}), see e.g. \cite{Chajda2} (\cite{Beran}). From now on, we will denote the classes of unsharp orthogonal posets, orthogonal posets, pseudo-Kleene lattices and ortholattices by $\mathcal{UOP}$, $\mathcal{OP}$, $\mathcal{PKL}$ and $\mathcal{OL}$, respectively. As customary, any pseudo-Kleene lattice (ortholattice) $\alga$ will be treated as an algebra $(A,\land,\lor,{}',0,1)$ of type $(2,2,1,0,0)$. So, $\mathcal{PKL}$ and $\mathcal{OL}$ are indeed varieties of bounded lattices with antitone involution.\\ 
The variety of \emph{modular} pseudo-Kleene lattices will be denoted by $\mathcal{MPKL}$. It is well-known and worth mentioning that, among prominent examples of modular pseudo-Kleene lattices, we find $\mathbf{E}(\mathcal{H})$, once $\mathcal{H}$ is assumed to be finite-dimensional. For reader's convenience we state and prove this fact in the next lemma.
\begin{lemma}\label{lem:finitedimmodular}
If $\mathcal{H}$ is a finite-dimensional Hilbert space, then $\mathbf{E}(\mathcal{H})$ is a modular pseudo-Kleene lattice. 
\end{lemma}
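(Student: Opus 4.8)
The structure $\mathbf{E}(\mathcal{H})$ is already known to be a pseudo-Kleene lattice (as recalled above from \cite{DeGroote}), so the only thing left to establish is \emph{modularity}: for all effects $E,F,G$ with $E\leq_{s}G$ one must show that $E\lor_{s}(F\land_{s}G)=(E\lor_{s}F)\land_{s}G$. The plan is to verify this identity level by level in the spectral parameter $\lambda$, reducing it to the modular law of the projection lattice $\mathbf{P}(\mathcal{H})$. The single place where finite-dimensionality enters is precisely the classical fact that, when $\dim(\mathcal{H})<\infty$, the lattice $\Pi(\mathcal{H})$ of projections (equivalently, of subspaces of $\mathcal{H}$) is modular; in infinite dimensions this fails, and with it modularity of $\mathbf{E}(\mathcal{H})$.

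Concretely, I would fix $\lambda\in\mathbb{R}$ and compute both sides through their spectral families, using the componentwise formulas $(X\lor_{s}Y)_{\lambda}=X_{\lambda}\land Y_{\lambda}$ and $(X\land_{s}Y)_{\lambda}=\bigwedge_{\mu>\lambda}(X_{\mu}\lor Y_{\mu})$ recalled above. This gives, for the left-hand side,
\[
\bigl(E\lor_{s}(F\land_{s}G)\bigr)_{\lambda}=E_{\lambda}\land\bigwedge_{\mu>\lambda}(F_{\mu}\lor G_{\mu}),
\]
and, for the right-hand side,
\[
\bigl((E\lor_{s}F)\land_{s}G\bigr)_{\lambda}=\bigwedge_{\mu>\lambda}\bigl((E_{\mu}\land F_{\mu})\lor G_{\mu}\bigr).
\]
The hypothesis $E\leq_{s}G$ means exactly that $G_{\mu}\leq E_{\mu}$ for every $\mu\in\mathbb{R}$, so in the \emph{modular} lattice $\mathbf{P}(\mathcal{H})$ the modular law may be applied to $G_{\mu}\leq E_{\mu}$ together with $F_{\mu}$, yielding $(E_{\mu}\land F_{\mu})\lor G_{\mu}=E_{\mu}\land(F_{\mu}\lor G_{\mu})$ for each $\mu>\lambda$.

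It then remains to push the infimum over $\mu$ through. Since in any complete lattice meets distribute over meets, $\bigwedge_{\mu>\lambda}\bigl(E_{\mu}\land(F_{\mu}\lor G_{\mu})\bigr)=\bigl(\bigwedge_{\mu>\lambda}E_{\mu}\bigr)\land\bigwedge_{\mu>\lambda}(F_{\mu}\lor G_{\mu})$, and right continuity of the spectral family of $E$ (property b.\ of a spectral family) gives $\bigwedge_{\mu>\lambda}E_{\mu}=E_{\lambda}$. Combining these steps identifies the two displayed expressions at every $\lambda$, and since an effect is uniquely determined by its spectral family this delivers the operator identity. I expect no genuine obstacle in the calculation itself; the substantive input is the modularity of $\Pi(\mathcal{H})$, which is exactly what makes the statement true and what confines it to finite dimensions. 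The one point to handle with care is the regularizing infimum $\bigwedge_{\mu>\lambda}$ in the definition of $\land_{s}$: it is what permits right continuity to be invoked at the final step, and it is essential that the meet over $\mu$ be taken \emph{after}, not before, applying the pointwise modular law.
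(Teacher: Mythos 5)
Your proposal is correct and follows essentially the same route as the paper's proof: a componentwise verification through spectral families, applying the modular law of $\mathbf{P}(\mathcal{H})$ at each level $\mu>\lambda$, pulling the infimum apart, and invoking right continuity $\bigwedge_{\mu>\lambda}E_{\mu}=E_{\lambda}$ to close the computation. The only cosmetic difference is that you work from the right-hand side toward the left, whereas the paper runs the same chain of equalities in the opposite direction.
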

\begin{proof}
In fact, let $A,B,C\in\mathcal{E}(\mathcal{H})$ be such that $A\leq_{s}B$. Therefore, for any $\lambda\in\mathbb{R}$, $B_{\lambda}\leq A_{\lambda}$. Let $\lambda\in\mathbb{R}$. By the definition of lattice-operations, right monotonicity and,  upon noticing that the lattice $\mathbf{P}(\mathcal{H})$ of projection operators over $\mathcal{H}$ is modular, we compute:
\begin{align*}
[A\lor_{s}(B\land_{s}C)]_{\lambda}=& A_{\lambda}\land (B\land_{s}C)_{\lambda}\\
=& A_{\lambda}\land\bigwedge_{\mu>\lambda}(B_{\mu}\lor C_{\mu})\\
=& \bigwedge_{\mu>\lambda}A_{\mu}\land\bigwedge_{\mu>\lambda}(B_{\mu}\lor C_{\mu})\\
=& \bigwedge_{\mu>\lambda}(A_{\mu}\land(B_{\mu}\lor C_{\mu}))\\
=& \bigwedge_{\mu>\lambda}(B_{\mu}\lor(A_{\mu}\land C_{\mu}))\\
=& \bigwedge_{\mu>\lambda}(B_{\mu}\lor(A\lor_{s}C)_{\mu})\\
=& (B\land_{s}(A\lor_{s}C))_{\lambda}.
\end{align*}
\end{proof}

If a pseudo-Kleene lattice $\alga$ is \emph{distributive}, then it will be called a Kleene lattice. Henceforth, the variety of Kleene lattices will be denoted by $\mathcal{KL}$.\\ It is well known that $\mathcal{KL}$ is generated by the three-element Kleene chain $\K_{3}$ (see e.g. \cite{Balbes}):
\begin{equation}
\xymatrix{
1=0'\ar@{-}[d]\\
\frac{1}{2}=\frac{1}{2}'\ar@{-}[d]\\
0=1'
} \tag{$\K_{3}$}
\end{equation}
\begin{definition}Let $\alga\in\mathcal{UOP}$. $\alga$ is said to be \emph{paraorthomodular} if it satisfies the further condition
\[x\leq y \text{ and } x'\land y=0 \text{ imply } x=y.\] 
\end{definition}
From now on, paraorthomodular UOPs will be simply called  paraorthomodular posets\footnote{Note that our notion of a paraorthomodular poset coincides with the concept of a \emph{sharply paraorthomodular poset} already introduced in the literature, see e.g. \cite{Chajda}.}.
\begin{remark}
Type I or II fuzzy quantum posets are paraorthomodular. For let $(\Omega,M)$ be a (type I or II) fuzzy quantum poset and let $a,b\in M$ be such that $a\leq b$ and $a^{c}\cap b=\mathbf{0}$. One has $b\cap b^{c}=a\cap a^{c}=\mathbf{0}$. Therefore, we have $a,b\in\{0,1\}^{\Omega}$. Let $x\in\Omega$. If $b(x)=0$, then $a(x)=0$, since $a\leq b$. Conversely, if $a(x)=0$, then $0= a^{c}\cap b(x)=\mathrm{inf}\{1-a(x),b(x)\}=b(x)$. By the same reasoning, one has also that $b(x)=1$ iff $a(x)=1$. We conclude $a=b$.
\end{remark}
\begin{example}
Any atomic amalgam of Kleene lattices which does not contain loops of order 3 or 4 forms a paraorthomodular poset \cite{ChFazLeLaPa}.
\end{example}
Orthomodular posets were introduced as abstract counterparts of (partial) algebras of quantum experimental propositions. Such structures generalize the orthocomplemented lattice $\mathbf{P}(\mathcal{H})$ of projection operators over a separable Hilbert space $\mathcal{H}$ upon letting countable joins be defined provided that elements are pairwise orthogonal. Such a requirement ensures a ``logical'' treatment of complementarity between uncompatible observables (see e.g. \cite{Suppes}).

\begin{definition}\emph{(\cite[Theorem 11]{Ka83})}An \emph{orthomodular poset} is a paraorthomodular orthogonal poset.
\end{definition}
Recall that, for orthomodular posets, the paraorthomodularity condition is equivalent to the well-known \emph{orthomodular law}
\[x\leq y\text{ implies }y=x\lor(y\land x').\]
However, this does not hold for paraorthomodular posets in general \cite{GiuLePa}.

A lattice-ordered paraorthomodular (orthomodular) poset will be called a paraorthomodular (orthomodular) lattice. In the sequel,  we denote the class of paraorthomodular (orthomodular) posets and the proper\footnote{See \cite{GiuLePa}.} quasi-variety (variety) of paraorthomodular (orthomodular) lattices by $\mathcal{PMP}$ ($\mathcal{OMP}$) and $\mathcal{POML}$ ($\mathcal{OML}$), respectively. The inclusion relationships between classes of unsharp orthogonal posets encountered so far is summarized by the diagram below. We denote by $\mathcal{BA}$ and $\mathcal{MOL}$ the variety of Boolean algebras and modular ortholattices, respectively.
 
\begin{equation}
\xymatrix{
&\mathcal{UOP}\ar@{-}[dl]\ar@{-}[d]\ar@{-}[dr]&\\
\mathcal{PKL}\ar@{-}[d]\ar@{-}[dr]&\mathcal{PMP}\ar@{-}[dr]\ar@{-}[dl]&\mathcal{OP}\ar@{-}[d]\ar@{-}[dl]\\
\mathcal{POML}\ar@{-}[d]\ar@{-}[drr]&\mathcal{OL}\ar@{-}[dr]&\mathcal{OMP}\ar@{-}[d]\\
\mathcal{MPKL}\ar@{-}[d]\ar@{-}[drr]&&\mathcal{OML}\ar@{-}[d]\\
\mathcal{KL}\ar@{-}[dr]&&\mathcal{MOL}\ar@{-}[dl]\\
&\mathcal{BA}&
} 
\end{equation}

Obviously,  any paraorthomodular lattice is a pseudo-Kleene lattice, but there are pseudo-Kleene lattices which are not paraorthomodular, like e.g. the ortholattice $\algb_{6}$ (the Benzene ring) depicted below:
\begin{equation}
\xymatrix{
&1\ar@{-}[dl]\ar@{-}[dr]&\\
x'\ar@{-}[d]&&y\ar@{-}[d]\\
y'\ar@{-}[dr]&&x\ar@{-}[dl]\\
&0&
} 
\end{equation}
The following fact is an easy consequence of \cite[Theorem 2.5]{Chajda}.
\begin{lemma}\label{lem: paraorthom}Let $\alga\in\mathcal{UOP}$. Then $\alga$ is paraorthomodular if and only if it does not contain a subalgebra isomorphic to $\algb_{6}.$
 
\end{lemma}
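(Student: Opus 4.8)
The plan is to establish both directions by exploiting the order-theoretic content of the Benzene ring $\algb_{6}$ and relating its defining inequalities to a violation of the paraorthomodular law. The key observation is that in $\algb_{6}$ one has a comparable pair, say $x \leq x'$ (reading the diagram: $x$ sits below $x'$, since $x \leq y' \leq x'$ via the left branch), together with $x' \land x = x \land x' = 0$ while $x \neq x'$. This is precisely a failure of paraorthomodularity restricted to the pair $(x, x')$. So I would first record the internal structure of $\algb_{6}$ as a pseudo-Kleene lattice and verify that it is not paraorthomodular, which is the easy half and is already implicit in the remark immediately preceding the lemma.

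For the forward direction (paraorthomodular $\Rightarrow$ no $\algb_{6}$ subalgebra), I would argue contrapositively: if $\alga$ contains a subalgebra $\algb$ isomorphic to $\algb_{6}$, then $\algb$ itself is a UOP that is not paraorthomodular, and since paraorthomodularity is expressed purely by the first-order condition ``$x\leq y$ and $x'\land y=0$ imply $x=y$'', which is inherited by subalgebras (the order, the involution, and the relevant meets $x'\land y$ are computed inside the subalgebra), $\alga$ cannot be paraorthomodular either. The one point requiring care here is that $\algb$ is a \emph{sub}algebra, so meets and the involution agree with those of $\alga$ on $\algb$; this guarantees the witnessing equation $x'\land y = 0$ transfers, and the failure genuinely occurs in $\alga$.

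For the converse direction (no $\algb_{6}$ subalgebra $\Rightarrow$ paraorthomodular), which is where the real work lies, I would invoke the cited \cite[Theorem 2.5]{Chajda}. The strategy is to suppose $\alga$ is not paraorthomodular and produce a copy of $\algb_{6}$ inside it. Concretely, if paraorthomodularity fails there exist $a \leq b$ with $a' \land b = 0$ yet $a \neq b$. From such a pair one builds the six elements $\{0, 1, a, a', b, b'\}$: the hypotheses $a \leq b$, $a'\land b = 0$, together with the Kleene condition and antitonicity of ${}'$, should force the Hasse diagram of $\algb_{6}$, with $a$ playing the role of $x$ and $b'$ (or $b$) the role of $y$. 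The main obstacle — and the reason the lemma defers to \cite{Chajda} rather than proving this from scratch — is verifying that these six elements are genuinely \emph{distinct}, that they are \emph{closed} under the lattice operations and ${}'$ (so that they form a subalgebra rather than merely a subposet), and that no unwanted coincidences or extra joins/meets collapse the configuration. Establishing closure requires computing all pairwise meets and joins and checking each lands back in the six-element set, which is exactly the delicate case analysis Theorem 2.5 of \cite{Chajda} carries out; I would cite it to supply this embedding and thereby complete the contrapositive.
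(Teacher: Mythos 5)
Your overall strategy matches the paper's, which offers no proof at all beyond declaring the lemma ``an easy consequence of \cite[Theorem 2.5]{Chajda}'': you transfer the failure of paraorthomodularity from a $\algb_6$-subalgebra to $\alga$ for one direction, and defer the construction of a $\algb_6$-copy from a violating pair to the cited theorem for the other. Both halves are structured correctly, and your caveat that the meet $x'\land y$ must be computed the same way in the subalgebra and in $\alga$ is exactly the right point of care.

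However, your ``key observation'' about where paraorthomodularity fails inside $\algb_6$ is wrong, and the verification step as you describe it would not go through. Reading the Hasse diagram, the two chains of $\algb_6$ are $0\le x\le y\le 1$ and $0\le y'\le x'\le 1$; the elements $x$ and $y'$ lie on \emph{different} chains and are incomparable, so the inequality $x\le y'\le x'$ you assert is false, and $x\not\le x'$. Moreover, your claimed witness is internally inconsistent: if one had both $x\le x'$ and $x\land x'=0$, then $x=x\land x'=0$; and in any case the hypothesis of the paraorthomodular law for the pair $(x,x')$ is $x'\land x'=0$, i.e.\ $x'=0$, not $x\land x'=0$. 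The correct witness is the comparable pair on a single chain: $x\le y$, with $x'\land y=0$ (since $x'$ and $y$ sit on opposite chains, their only common lower bound is $0$) and $x\ne y$ --- this is precisely the instance the paper uses later in the proof of Theorem \ref{thm: forb config}. With that substitution your argument is sound; as written, the easy half of your proof checks a vacuous or false instance of the law and establishes nothing.
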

\begin{definition}
Let $\alga\in\mathcal{UOP}$. A sub-unsharp orthogonal poset $\algb$ of $\alga$ is said to be a Kleene-sublattice of $\alga$ provided that, for any $x,y\in B$, 
\begin{enumerate}
\item $x\land y$ exists in $\alga$ and $x\land y\in B$;
\item $\algb$, regarded as a pseudo-Kleene lattice, is distributive, i.e. it is a Kleene lattice.
\end{enumerate}
\end{definition}
For any $\alga\in\mathcal{UOP}$, we will denote by $\mathfrak{S}(\alga)$ the family of its Kleene sublattices.
\begin{definition}Let $\alga\in\mathcal{UOP}$. A \emph{Kleene block} of $\alga$ is a maximal Kleene sub-lattice of $\alga$.
\end{definition}

Given an unsharp orthogonal poset $\alga$, let us denote by $\mathfrak{B}(\alga)\subseteq \mathfrak{S}(\alga)$ the set of its Kleene blocks. Note that, for any $\alga\in\mathcal{UOP}$, $\mathfrak{B}(\alga)\neq\emptyset$, since $\{0,1\}\subseteq A$ is the universe of a Kleene sub-lattice of $\alga$ which, by Zorn's lemma, is contained in a maximal one. \begin{remark}
Any unsharp orthogonal poset $\alga$ is the union of its Kleene blocks. In fact, for any $x\in A$, the set $\{x,x',x\land x',x\lor x',0,1\}$ is the universe of a Kleene sub-lattice $\algb$ of $\alga$. By Zorn's Lemma, there exists $\algc\in\mathfrak{B}(\alga)$ containing $\algb$. So, any $x\in A$ is contained in a Kleene block. 
\end{remark}
It is not difficult to see that any Kleene sub-lattice of an OP $\alga$ is a Boolean algebra. Therefore, orthogonal posets are union of their Boolean sub-algebras.

\begin{definition}Let $\alga\in\mathcal{UOP}$. $\alga$ is said to be a \emph{pasting} of its blocks provided that, for any $x,y\in A$, $x\leq y$ iff there exists $\K\in\mathfrak{B}(\alga)$ such that $x\leq^{\K}y$.
\end{definition}

In other words, an unsharp orthogonal poset $\alga=(A,\leq,',0,1)$ is the pasting of its blocks if $\leq$ is completely determined by Kleene sublattices of $\alga$. Henceforth, UOPs which are pastings of their Kleene blocks will be called \emph{tame}.

\begin{example}Atomic amalgams of Kleene lattices which do not contain loops of order $3$ or $4$ are tame. This is a consequence of \cite[p. 10]{ChFazLeLaPa}.
\end{example}
The following result provides a characterization of orthomodular posets within the realm of orthogonal posets. Let $\alga=(A,\leq,',0,1)$ and $\algb=(B,\leq,',0,1)$ be bounded posets with antitone involution. Recall that a mapping $f:A\to B$ is an \emph{orthohomomorphism} provided that the following hold, for any $a,b\in A$:
\begin{enumerate}
\item  $a\leq b$ implies $f(a)\leq f(b)$;
\item $f(a')=f(a)'$;
\item $f(1^{\alga})=1^{\algb}$.
\end{enumerate}
$f$ is an \emph{orthoembedding} if it is an orthohomomorphism satisfying the further condition, for any $a,b\in A$, $f(a)\leq f(b)$ implies $a\leq b$, and it is an \emph{orthoisomorphism} if it is a surjective orthoembedding. If $\alga$ and $\algb$ are orthoisomorphic, we write $\alga\cong\algb$.

\begin{lemma}\label{lem: tameorthom}(See e.g. \cite{Beran,Ka83})Let $\alga\in\mathcal{OP}$. The following are equivalent:
\begin{enumerate}
\item $\alga$ is tame;
\item $\alga\in\mathcal{OMP}$;
\item $\alga$ does not contain a sub-orthogonal poset orthoisomorphic to $\algb_{6}$. 
\end{enumerate}
\end{lemma}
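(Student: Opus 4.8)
The plan is to read the three conditions through Lemma~\ref{lem: paraorthom}. First note that, since $\alga\in\mathcal{OP}$, the definition of an orthomodular poset makes $(2)$ equivalent to ``$\alga$ is paraorthomodular'', which by Lemma~\ref{lem: paraorthom} is equivalent to $\alga$ containing no \emph{subalgebra} isomorphic to $\algb_{6}$. To turn this into $(3)$ I would check that, for an orthogonal poset, admitting a subalgebra isomorphic to $\algb_{6}$ and admitting a sub-orthogonal poset orthoisomorphic to $\algb_{6}$ amount to the same thing. One direction is immediate. For the other, suppose six elements $0,1,a,a',b,b'$ inherit the order and involution of $\algb_{6}$, so that $a<b$ and $b'<a'$. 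Any common lower bound of $a$ and $b'$ lies below $a$ and below $b'\leq a'$, hence below $a\land a'=0$, so $a\land b'=0$ and dually $a'\lor b=1$; moreover $a\leq (b')'$ forces $a\lor b'$ to exist in $\alga$, and being the only common upper bound of $a,b'$ available inside the (operation-closed) substructure it equals $1$, whence $a'\land b=(a\lor b')'=0$. Thus the six elements are closed under the inherited operations and reproduce $\algb_{6}$, i.e. they already form a subalgebra. This settles $(2)\Leftrightarrow(3)$.

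For $(1)\Rightarrow(2)$ I would argue directly that tameness yields paraorthomodularity. Let $x\leq y$ with $x'\land y=0$. By tameness there is a block $\K\in\mathfrak{B}(\alga)$ with $x\leq^{\K}y$, so $x,y\in\K$; since $\alga\in\mathcal{OP}$, the Kleene block $\K$ is a \emph{Boolean} algebra, closed under ${}'$ and under the meets of its elements, these being computed already in $\alga$. Hence $x'\land y$ is evaluated inside $\K$ and equals $0$, so in the Boolean algebra $\K$ we get $y=(y\land x)\lor(y\land x')=x\lor 0=x$. Therefore $\alga$ is paraorthomodular, i.e. $\alga\in\mathcal{OMP}$.

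The substantial implication is $(2)\Rightarrow(1)$. The nontrivial half of tameness is to produce, for each $x\leq y$, a single block containing both. Put $d:=y\land x'$, which exists because $x\leq (y')'$ makes $x\lor y'$ exist and then $d=(x\lor y')'$. Orthomodularity supplies $y=x\lor d$ via the orthomodular law, and $x,d,y'$ are pairwise orthogonal (indeed $d\leq x'$, $x\leq (y')'$, and $d\leq y=(y')'$). Since an orthomodular poset admits joins of finite orthogonal families, the joins of all subsets of $\{x,d,y'\}$ exist and, I claim, constitute an eight-element Boolean subalgebra $\algb\cong 2^{3}$---a Kleene sublattice---containing $x$ and $y=x\lor d$. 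By Zorn's Lemma $\algb$ sits inside a block $\K\in\mathfrak{B}(\alga)$, and then $x\leq^{\K}y$, so $\alga$ is tame.

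The hard part is the claim just invoked: that the subset-joins of the pairwise orthogonal triple $\{x,d,y'\}$ genuinely form a Boolean subalgebra, with every required meet and join existing in $\alga$, agreeing with the $2^{3}$ pattern, and distributive. This is precisely the classical fact that a finite orthogonal family generates a Boolean subalgebra of an orthomodular poset. I would either invoke it from \cite[Ch.~1]{Ka83} (see also \cite{Beran}) or verify it by hand, using the finite-orthogonal-join axiom to list the eight elements and De Morgan duality to check closure under ${}'$ and $\land$.
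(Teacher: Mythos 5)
The paper offers no proof of this lemma at all---it is stated as a known result with a pointer to \cite{Beran,Ka83}---so there is no in-text argument to compare against; what you have written is a correct reconstruction of the standard one. Your reduction of $(2)\Leftrightarrow(3)$ to Lemma~\ref{lem: paraorthom} is sound, provided ``sub-orthogonal poset'' is read (as the paper's later definition of a sub-orthomodular poset suggests) so that the orthogonal joins of the six elements are the ones computed in $\alga$; this is exactly what your ``operation-closed'' parenthesis supplies, and without it the step $a'\land b=(a\lor b')'=1'=0$ would not go through, since a mere order-and-involution copy of $\algb_{6}$ could have $a\lor^{\alga}b'<1$. The implication $(1)\Rightarrow(2)$ is fine as written. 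For $(2)\Rightarrow(1)$, the decomposition $y=x\lor d$ with $d=(x\lor y')'$ and the observation that $\{x,d,y'\}$ is a pairwise orthogonal triple with join $1$ is the classical route; the one load-bearing step you do not carry out---that the subset-joins form a Boolean, hence Kleene, sublattice---is genuinely in \cite{Ka83} and can also be verified directly here, since orthomodularity gives $x'=d\lor y'$, $y''=x\lor d$ and $d'=x\lor y'$, so the eight elements are closed under $'$ and under the meets of $\alga$ and are ordered as $2^{3}$ (or a degenerate quotient thereof). The Zorn extension to a block is the same device the paper itself uses after the definition of a Kleene block and in Theorem~\ref{thm:SPOpastings}. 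In short: correct, and complete modulo one properly attributed classical fact.
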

As it is well known, the notion of \emph{commutativity} plays an essential role in the theory of orthomodular posets. In fact,  commuting elements of an OMP $\alga$ always generate a Kleene (Boolean) sub-lattice of $\alga$.   

\begin{definition}\emph{(See e.g. \cite[p. 306]{Beran})}Let $\alga\in\mathcal{OMP}$. Then $x,y\in A$ \emph{commute}, written $x\C{}y$, if $x\land y$, $x\land y'$ exist in $\alga$ and $x = (x\land y)\lor(x\land y')$;
\end{definition}

\begin{lemma}\emph{(See e.g. \cite[p. 392]{Pulman})}Let $\alga\in\mathcal{OMP}$ and $x,y\in A$. Then $\{x,y\}\subseteq B$, for some Boolean sub-lattice $\algb$ of $\alga$, if and only if $x\C{}y$.
\end{lemma}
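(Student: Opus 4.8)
The plan is to prove the two implications separately: the direction from a common Boolean subalgebra to commutativity is routine, while the converse is where the orthomodular structure does the real work.

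For the reading $\{x,y\}\subseteq B\Rightarrow x\C{}y$, assume $x,y$ lie in a Boolean subalgebra $\algb$ of $\alga$. Since $\algb$ is a Kleene-sublattice, the meets $x\land y$ and $x\land y'$ taken in $\algb$ coincide with those of $\alga$ (clause (1) of the definition of a Kleene-sublattice), and since $\algb$ is closed under $'$ the joins agree too: because $'$ is a dual order-isomorphism one has $u\lor^{\algb}v=(u'\land^{\algb}v')'=(u'\land^{\alga}v')'=u\lor^{\alga}v$ for $u,v\in B$. As $\algb$ is Boolean, $x=(x\land y)\lor(x\land y')$ holds in $\algb$, hence in $\alga$, so $x\C{}y$.

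For the converse, assume $x\C{}y$ and set $c:=x\land y$ and $a:=x\land y'$, so that both exist and $x=a\lor c$. The crux is the symmetry of commutativity, $x\C{}y\Rightarrow y\C{}x$. Here I would apply orthomodularity to $c\leq y$, obtaining $y=c\lor b_{0}$ with $b_{0}:=y\land c'$. I would then identify $b_{0}$ with $y\land x'$: from $a\leq y'$ we get $b_{0}\leq y\leq a'$, while $b_{0}\leq c'$ by definition, so $b_{0}\leq a'\land c'=(a\lor c)'=x'$ (using the De Morgan law for the antitone involution); conversely, any common lower bound $w$ of $y$ and $x'$ satisfies $w\leq x'\leq c'$ (since $c\leq x$) and $w\leq y$, whence $w\leq y\land c'=b_{0}$. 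Thus $y\land x'=b_{0}$ and $y=(y\land x)\lor(y\land x')$, i.e.\ $y\C{}x$. Since the definition of $\C{}$ is symmetric under replacing the second argument by its complement, iterating symmetry yields $x'\C{}y$ and $y'\C{}x$; consequently all four meets $a=x\land y'$, $b=x'\land y$, $c=x\land y$, $d=x'\land y'$ exist, are pairwise orthogonal, and satisfy $a\lor c=x$, $c\lor b=y$, $b\lor d=x'$, $a\lor d=y'$ and $a\lor b\lor c\lor d=x\lor x'=1$.

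It then remains to assemble the Boolean subalgebra. The elements $a,b,c,d$ form a pairwise orthogonal partition of $1$, so by the defining axiom of an $\mathcal{OMP}$ every join of a subset of $\{a,b,c,d\}$ exists; the collection $B$ of all such joins is closed under $\land$, $\lor$ and $'$ (with $(\bigvee_{i\in S}\cdot)'=\bigvee_{i\notin S}\cdot$, which is again an orthomodularity computation), its operations are inherited from $\alga$, and it is a Boolean algebra. As $x=a\lor c\in B$ and $y=c\lor b\in B$, the proof is complete. The main obstacle is the symmetry step together with the verification that $B$'s operations really coincide with those of $\alga$: both must be carried out in a structure that is only a poset, not a lattice, so each meet or join invoked has to be shown to exist and distributivity is unavailable. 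Orthomodularity — equivalently, paraorthomodularity plus sharpness of every element in an $\mathcal{OP}$ — is precisely what substitutes for distributivity at each of these points.
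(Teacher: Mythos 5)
Your proof is correct. The paper does not prove this lemma at all --- it is quoted from Pulmannov\'a's paper --- so there is no internal proof to compare against; your argument is the standard one from the orthomodular-poset literature (symmetry of $\C{}$ via the orthomodular law applied to $x\land y\leq y$, then the Boolean algebra of joins of subsets of the orthogonal partition $\{x\land y,\,x\land y',\,x'\land y,\,x'\land y'\}$ of $1$), and the steps you flag as needing care (existence of $y\land(x\land y)'$ from $x\land y\leq y$, iterated binary orthogonal joins, and $(\bigvee S)'=\bigvee S^{c}$ via orthomodularity) are all handled correctly.
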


Note that, in general, arbitrary families of mutually commuting elements in an OMP $\alga$ need not be contained in a common Boolean subalgebra (\cite[p. 393]{Pulman}). However, if we assume that $\alga$ is a \emph{quantum logic} (see e.g. \cite{Pulman}), then this holds. Upon recalling that any orthomodular lattice is a quantum logic, we have the following result.
\begin{lemma}Let $\alga\in\mathcal{OML}$. Then, for any $\{a_{i}\}_{i\in I}\subseteq A$, the following are equivalent:
\begin{enumerate}
\item There exists a Boolean subalgebra $\algb$ of $\alga$ such that, $\{a_{i}\}_{i\in I}\subseteq B$;
\item For any $i,j\in I$, $a_{i}\C{}a_{j}$.
\end{enumerate}
 \end{lemma}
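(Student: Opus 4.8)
The implication $(1)\Rightarrow(2)$ is immediate: if $\{a_{i}\}_{i\in I}\subseteq B$ for a Boolean subalgebra $\algb$, then every pair $a_{i},a_{j}$ lies in the Kleene (indeed Boolean) sub-lattice $\algb$, so $a_{i}\C{}a_{j}$ by the preceding lemma. The content of the statement thus lies in the converse $(2)\Rightarrow(1)$. The plan is to take for $\algb$ the subalgebra of $\alga$ generated by $\{a_{i}\}_{i\in I}$ under $\{\land,\lor,{}',0,1\}$ and to prove that it is Boolean.

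The crux is that, in an orthomodular lattice, commutativity is symmetric and propagates through the operations: if $x\C{}y$ and $x\C{}z$, then $x\C{}y'$, $x\C{}(y\land z)$ and $x\C{}(y\lor z)$. These are standard facts about the commutativity relation of an OML (see e.g. \cite{Beran}); the first is elementary, while the meet/join cases rest on the Foulis--Holland theorem, which asserts that a commuting triple generates a distributive sublattice. Granting them, I would argue as follows. Fix $b\in B$. Since $b$ is built from the generators by $\land,\lor,{}'$, the propagation facts and an induction on term complexity give $b\C{}a_{i}$ for every $i$; by symmetry, $a_{i}\C{}b$ for all $i$, so the whole family $\{a_{i}\}_{i\in I}$ lies in the set of elements commuting with $b$. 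As that set contains $0,1$ and is closed under $\land,\lor,{}'$ (again by propagation), it contains all of $B$. Hence every element of $B$ commutes with $b$; as $b\in B$ was arbitrary, all elements of $B$ pairwise commute.

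It remains to conclude that $\algb$ is Boolean. Any three elements of $B$ pairwise commute, so the Foulis--Holland theorem yields the distributive laws for every triple in $B$; thus $\algb$ is a distributive ortholattice containing $0,1$ and closed under $'$, i.e. a Boolean algebra, and it contains $\{a_{i}\}_{i\in I}$ by construction. Alternatively, and this is the route signalled above, one may simply observe that any $\alga\in\mathcal{OML}$ is a \emph{quantum logic}, whence the statement is a direct instance of the corresponding result for quantum logics recorded in \cite[p. 393]{Pulman}, condition (2) being precisely mutual commutativity of the family.

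The main obstacle is the propagation step: that commutativity is preserved under the lattice operations, so that the generated subalgebra remains inside the set of pairwise commuting elements. This is exactly the point that fails for general orthomodular \emph{posets}, where the relevant meets and joins need not exist and the Foulis--Holland machinery is unavailable, and it is what the lattice structure of $\alga$, equivalently its being a quantum logic, supplies.
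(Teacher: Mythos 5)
Your proposal is correct. The paper itself gives no argument at all for this lemma: it simply records, in the sentences immediately preceding the statement, that arbitrary mutually commuting families in an OMP need not lie in a common Boolean subalgebra, but that they do in a \emph{quantum logic}, and that every orthomodular lattice is a quantum logic; the lemma is then a direct instance of the result cited from Pulmannov\'a. This is exactly the ``alternative'' route you signal at the end, so on that score you match the paper. What you add is a self-contained proof of the nontrivial direction: the hypothesis that the commutant $C(b)=\{x:x\C{}b\}$ of each element is a subalgebra (commutativity propagates through $\land$, $\lor$, $'$, which in an OML follows from Foulis--Holland and elementary facts), a two-stage closure argument showing that the subalgebra generated by the family is pairwise commuting, and then Foulis--Holland again to get distributivity of every triple, whence the generated ortholattice is Boolean. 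This argument is sound --- the only points worth being explicit about are the symmetry and reflexivity of $\C{}$ in an OML (needed to start the induction) and the fact that elements of the generated subalgebra are terms in finitely many generators, so the induction on term complexity applies even when $I$ is infinite; both are standard. Your direct argument buys independence from the quantum-logic machinery of \cite{Pulman} at the cost of invoking the Foulis--Holland theorem, which the paper has in any case already recorded as Theorem \ref{fuli}; either route is acceptable.
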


An essential tool for practitioners in orthomodular lattices theory is given by the Foulis-Holland theorem.
\begin{theorem}\label{fuli}\emph{(\cite[Proposition 2.8]{BH00})} If $\mathbf{L}$ is an
orthomodular lattice and $a,b,c\in L$ are such that $a$ commutes both with $b$
and with $c$, then the set $\left\{  a,b,c\right\}  $ generates a distributive
sublattice of $\mathbf{L}$.
\end{theorem}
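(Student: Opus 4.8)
The statement is the classical Foulis--Holland theorem, and the plan is to bypass any attempt at a symmetric distributive law --- which would illegitimately require $b$ and $c$ to commute --- by splitting the generated sublattice through the element $a$. Write $\C{}$ for commutation and $C(a)=\{x\in L:x\C{}a\}$ for the commutant of $a$. First I would assemble the standard commutation calculus available in any orthomodular lattice: commutation is symmetric, $a\C{}x$ holds iff $a\C{}x'$ holds, comparable elements commute, and commutation admits the equivalent form
\[
a\C{}x\quad\Longleftrightarrow\quad a\land(a'\lor x)=a\land x .
\]
All of these are single-relation facts requiring only routine use of orthomodularity (see e.g. \cite{Beran}), and none of them presupposes a distributive law.

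The crux is the \emph{partial distributive law}: if $a\C{}x$ and $a\C{}y$, then $a\land(x\lor y)=(a\land x)\lor(a\land y)$. The inequality $\geq$ is automatic, so writing $u=(a\land x)\lor(a\land y)$ and $w=a\land(x\lor y)$, with $u\leq w$, the orthomodular law reduces the claim to $w\land u'=0$. Here I would argue as follows: $w\land u'\leq a$ and $u'\leq(a\land x)'=a'\lor x'$, whence $w\land u'\leq a\land(a'\lor x')=a\land x'\leq x'$ by the commutation form above (using $a\C{}x'$); symmetrically $w\land u'\leq y'$; and trivially $w\land u'\leq w\leq x\lor y$. Therefore $w\land u'\leq x'\land y'\land(x\lor y)=(x\lor y)'\land(x\lor y)=0$. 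This computation, and the correct sequencing of the three bounds so as not to smuggle in a distributive law, is the step I expect to be the main obstacle. From the partial law (and its dual, obtained by orthocomplementation) one then deduces that $C(a)$ is closed under $\land$ and $\lor$: for instance $(x\lor y)\land a$ and $(x\lor y)\land a'$ recombine to $x\lor y$, so $x\lor y\in C(a)$.

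Granting this, every element of the sublattice $S$ generated by $\{a,b,c\}$ lies in $C(a)$, since $a,b,c\in C(a)$ and $C(a)$ is closed under the lattice operations. Hence each $s\in S$ satisfies $s=(s\land a)\lor(s\land a')$, so the map $\phi\colon S\to[0,a]\times[0,a']$ given by $\phi(s)=(s\land a,\,s\land a')$ is well defined; it is injective by this very decomposition, it preserves meets trivially, and it preserves joins exactly by the partial distributive law applied to $a$ and to $a'$. Thus $\phi$ is a lattice embedding. Finally, the image of $S$ under the first projection is the sublattice of $[0,a]$ generated by $a,\,b\land a,\,c\land a$; since $a$ is its greatest element this is generated by two elements below a top, hence distributive, and symmetrically the second projection, generated by $b\land a',\,c\land a'$ with bottom $0$, is distributive. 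Consequently $\phi(S)$ is a sublattice of a product of two distributive lattices and so is distributive, whence $S\cong\phi(S)$ is distributive. The whole point is that the possibly non-distributive interaction of $b$ and $c$ is severed by $a$ into two two-generated intervals, each distributive for free.
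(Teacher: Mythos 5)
Your proof is correct. Note first that the paper itself offers no proof of this statement: it is quoted verbatim as \cite[Proposition 2.8]{BH00} (the classical Foulis--Holland theorem), so there is nothing internal to compare against. Your argument is sound at every step: the reduction of the partial distributive law $a\land(x\lor y)=(a\land x)\lor(a\land y)$ to $w\land u'=0$ via orthomodularity is the standard one, and the three bounds $w\land u'\leq x'$, $w\land u'\leq y'$, $w\land u'\leq x\lor y$ are each obtained without circularity (the only nontrivial one uses $a\C{}x'$ and the identity $a\land(a'\lor x')=a\land x'$, which is a legitimate single-pair consequence of commutation in an OML). Where you depart slightly from the most common textbook presentation is the finish: the usual route verifies all six distributive identities for the triple $\{a,b,c\}$ by repeated use of the partial law and then invokes a separate lattice-theoretic fact to pass from ``the generators satisfy the distributive laws'' to ``the generated sublattice is distributive''. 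Your embedding $s\mapsto(s\land a,\,s\land a')$ of the generated sublattice into $[0,a]\times[0,a']$ sidesteps that last step entirely: injectivity comes from $s=(s\land a)\lor(s\land a')$, join-preservation is exactly the partial law for $a$ and $a'$, and each projection is generated by two elements together with a bound, hence distributive. This is cleaner and more self-contained, at the small cost of having to observe that the image is a sublattice of the product of its (distributive) projections.
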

As recalled by Lemma \ref{lem: tameorthom}, an orthogonal poset is tame if and only if it is paraorthomodular, i.e. it is an orthomodular poset. However, once we consider the wider framework of unsharp orthogonal posets, this no longer holds.
\begin{example}
Consider the pseudo-Kleene lattice $\algb_{8}$ depicted below. $\algb_{8}$ is paraorthomodular. However, one has $x\leq y$ but $x$ and $y$ generate a non-distributive (indeed, non-modular) subalgebra of $\algb_{8}$.
\begin{equation}\label{fig:1}
{\tiny\xymatrix{
&1\ar@{-}[d]&\\
&z\ar@{-}[dl]\ar@{-}[dr]&\\
x'\ar@{-}[d]&&y\ar@{-}[d]\\
y'\ar@{-}[dr]&&x\ar@{-}[dl]\\
&z'\ar@{-}[d]&\\
&0&}}\tag{$\algb_{8}$}
\end{equation} 
\end{example}
In the light of the above discussion, it naturally raises the question if a characterization of tame unsharp orthogonal posets generalizing Lemma \ref{lem: tameorthom} can be provided. Indeed, the next section gives a positive answer for pseudo-Kleene lattices. Even more, we will show that the class of pseudo-Kleene lattices which are pastings of their blocks can be axiomatized by means of a rather simple equation which is, indeed, a generalization of the orthomodular law. 
\section{Super paraorthomodular lattices}\label{sp-orthomodular}
In this section we introduce the variety $\mathcal{SPO}$ of super-paraorthomodular lattices which turn out to play for pseudo-Kleene lattices the same role played by orthomodular lattices for ortholattices. In fact, we will show that a pseudo-Kleene lattice is tame if and only if is super-paraorthomodular (Theorem \ref{thm:SPOpastings}). Also, we will provide a number of results clarifying the order-theoretical and algebraic consequences of super-paraorthomodularity (e.g. Theorem \ref{thm: forb config} and Lemma \ref{lem: gigino}). Furthermore, we show that the boundedly complete lattice of effects $\mathbf{E}(\mathcal{H})$ over a separable Hilbert space provides a concrete example of a super paraorthomodular lattice (Theorem \ref{thm: spectrallattice}). As a consequence, algebras we are deailing with might be ranked as genuine quantum structures. Further constructions yielding super-paraorthomodular lattices will be outlined and discussed. Moreover, we will show that sharp elements of structures here introduced always form an orthomodular poset. Indeed, this shows that our framework captures some prominent features of pseudo-Kleene lattices of effects with a certain degree of accuracy. In Section \ref{sec:forbconfandcomm} we will generalize some well known results for orthomodular (modular ortho-)lattices obtained over the past years by providing a forbidden configuration theorem and some remarks on the theory of commutativity.

\begin{definition}A pseudo-Kleene lattice $\alga$ is said to be \emph{super paraorthomodular} (\emph{sp-orthomodular}, for short), provided it satisfies the following conditions:
\begin{enumerate}[{SP}1]
\item $x\leq y$ and $x'\land y=(x\land x')\lor(y\land y')$ imply $y\land(x\lor x')=x\lor(y\land y')$;
\item $x\leq y$ implies $(x\land x')\lor(y\land y')=(x'\land y)\land(x'\land y)'.$ 
\end{enumerate}
\end{definition}
Given a partially ordered set $(P,\leq)$ and $x,y\in P$,  $x\parallel y$ will be short for $x\nleq y$ and $y\nleq x$.
\begin{theorem}\label{thm: forb config}
Let $\alga\in\mathcal{PKL}$. Then $\alga$ satisfies (SP1) iff $\alga$ does not contain a subalgebra isomorphic neither to $\algb_{6}$ nor to $\algb_{8}$.
\end{theorem}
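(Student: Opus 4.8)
The plan is to prove both directions, the forward one being routine and the converse an explicit construction. For the implication ``(SP1) implies no forbidden subalgebra'' I would first note that (SP1) is a quasi-identity: both premises ($x\leq y$, which abbreviates $x\land y=x$, and $x'\land y=(x\land x')\lor(y\land y')$) and the conclusion are equations, and quasi-identities pass to subalgebras of an algebra. Hence it suffices to verify by direct computation that neither $\algb_{6}$ nor $\algb_{8}$ satisfies (SP1). Using the comparable pair $x\leq y$ labelled in each diagram, the premise holds on the nose (both sides equal $0$ in $\algb_{6}$, and both equal the fuzzy bottom $z'$ in $\algb_{8}$) while the conclusion collapses to $x=y$, which is false. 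Therefore no algebra satisfying (SP1) can contain a subalgebra isomorphic to $\algb_{6}$ or $\algb_{8}$.

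For the converse I would argue contrapositively. Suppose (SP1) fails, witnessed by $x\leq y$ with $x'\land y=m$, where I write $m:=(x\land x')\lor(y\land y')$ and $m':=(x\lor x')\land(y\lor y')$; note that $m'=(m)'$. Put $p:=x\lor(y\land y')$ and $q:=y\land(x\lor x')$, together with their involutes $p'=x'\land(y\lor y')$ and $q'=y'\lor(x\land x')$. A short absorption argument gives $p\leq q$ in every case, so the failure of (SP1) is precisely the strict inequality $p<q$. The computation that tames the whole construction is $p\lor q'=x\lor y'$ (again by absorption of $x\land x'$ under $x$ and of $y\land y'$ under $y'$), whence $p'\land q=(p\lor q')'=x'\land y=m$ by the hypothesis, and dually $p\lor q'=m'$.

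From this single identity $p'\land q=m$, together with $m\leq p,q,q',p'\leq m'$ and the chains $p\leq q$, $q'\leq p'$, every ``crossing'' meet collapses to $m$: for instance $p\land p'\leq q\land p'=m\leq p\land p'$, and likewise $q\land q'=p\land q'=m$, with the dual joins all equal to $m'$. Strictness then rules out the degeneracies. If $p=m$, then $m'=p\lor q'=q'$ forces $q=m$, contradicting $p<q$; symmetrically, if $q=m'$, then $p'\land m'=p'=m$ forces $p=m'$, again contradicting $p<q$. The same two identities yield the incomparabilities $p\parallel p'$, $q\parallel q'$, $p\parallel q'$, $q\parallel p'$, so that $m<p<q<m'$ and $m<q'<p'<m'$ genuinely hold.

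Finally, since $m'=(m)'$, one has $m=0$ if and only if $m'=1$, which splits the argument into exactly two shapes. When $m=0$ (hence $m'=1$) the six distinct elements $\{0,p,q,q',p',1\}$ are closed under $\land,\lor,{}'$ and form a subalgebra isomorphic to $\algb_{6}$; otherwise $0<m<m'<1$ and the eight distinct elements $\{0,m,p,q,q',p',m',1\}$ form a subalgebra isomorphic to $\algb_{8}$. I expect the main obstacle to be this degenerate-case bookkeeping---verifying the strictness $m<p<q<m'$, all four incomparabilities, and closure under the operations---rather than any hard lattice manipulation, since the entire algebraic content is absorbed into the identity $p\lor q'=x\lor y'$, which converts the hypothesis directly into $p'\land q=m$. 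Notably, this route produces the forbidden configuration from the witness alone, without separately invoking paraorthomodularity or Lemma~\ref{lem: paraorthom}.
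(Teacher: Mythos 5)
Your proof is correct, and the converse direction builds exactly the same forbidden configuration as the paper: the eight elements $0$, $(x\land x')\lor(y\land y')$, $x\lor(y\land y')$, $y\land(x\lor x')$, $y'\lor(x\land x')$, $x'\land(y\lor y')$, $(x\lor x')\land(y\lor y')$, $1$. The difference lies in how the configuration is verified. The paper first rules out the sub-cases $x\leq x'$ and $y'\leq y$ by separate computations, treats the sharp case $x\land x'=0=y\land y'$ by deferring to Beran's argument for ortholattices, and then reads off the incomparabilities; your version extracts everything from the single absorption identity $p\lor q'=x\lor y'$, which turns the hypothesis $x'\land y=(x\land x')\lor(y\land y')$ directly into $p'\land q=m$ and $p\lor q'=m'$. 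Together with the trivial bounds $m\leq p,q,q',p'\leq m'$, the chain $p\leq q$ (which uses the Kleene condition $y\land y'\leq x\lor x'$), and the involution facts $(p)'=p'$, $(q)'=q'$, $(m)'=m'$, this forces all four crossing meets to equal $m$ and all four crossing joins to equal $m'$, after which the degeneracy checks and incomparabilities follow mechanically. This buys two things: the sub-case analysis disappears (e.g.\ $x\leq x'$ would give $q=x'\land y=m$ and hence $p=q$, which your degeneracy argument already excludes), and the $\algb_6$ case is no longer outsourced to Beran but falls out as the specialization $m=0$, $m'=1$ of the same eight-element scheme. The only point worth making explicit in a final write-up is the closure computation: the four incomparable pairs $\{p,p'\}$, $\{p,q'\}$, $\{q,p'\}$, $\{q,q'\}$ all meet to $m$ and join to $m'$, and the set is closed under $'$, so the displayed elements really do form a subalgebra.
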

\begin{proof}
As regards the left-to-right direction, if $\alga$ contains a subalgebra isomorphic to $\algb_{6}$, there are $x,y\in A$ such that $x < y$ and $x'\land y=x\land x'=0=y\land y'$. So $x'\land y=(x\land x')\lor(y\land y')$ but $y\land(x\lor x')=y\ne x= x\lor(y\land y')$. If $\alga$ contains a subalgebra  isomorphic to $\algb_8$, we argue similarly, upon noticing that, there are $x,y\in A$ such that $x< y$ and $x'\land y=x\land x'=y\land y'$.\\ Concerning the right-to-left direction, let us assume that there are $x,y\in A$ such that $x\leq y$ and $x'\land y=(x\land x')\lor(y\land y')$ but $y\land (x \lor x')\neq x\lor(y\land y')$. If $x\land x'=0=y\land y'$, then one can argue by means of the same proof employed to show the analogous direction in \cite[Theorem II.5.4]{Beran} (through \cite[Theorem II.3.1]{Beran}), i.e. that $\alga$ contains a subalgebra isomorphic to $\algb_6$. Therefore, we assume w.l.o.g. that $x\land x'\ne 0$ or $y\land y'\ne 0$. We confine ourselves to consider the case $x\land x'\ne 0\ne y\land y'$, since the remaining ones can be treated similarly. One has that $(x'\land x)\lor(y'\land y)=x'\land y=x'\land(y\lor y')\land y\land(x\lor x')\geq (x\lor(y\land y'))\land(y'\lor(x\land x'))\geq (x\land x')\lor(y\land y')$. If $x\leq x'$, then $y\land(x\lor x')=y\land x'=(x\land x')\lor(y\land y')=x\lor (y\land y')$, a contradiction. Analogously, if $y'\leq y$, then $x'\land(y\lor y')=x'\land y=(x\land x')\lor (y\land y')=(x\land x')\lor y'$, again a contradiction. Therefore, we can assume that $x\nleq x'$ and $y'\nleq y$. So, we have $x\lor(y\land y')\parallel y'\lor(x\land x')$, $x\lor(y\land y')\nleq x'\land(y\lor y')$ and $y'\lor(x\land x')\nleq y\land(x\lor x')$, as well as $x\lor(y\land y')\ne(x\land x')\lor(y\land y')\ne y'\lor(x\land x')$. Hence, $\alga$ contains the following subalgebra which is indeed isomorphic to $\algb_{8}$:
\begin{equation}
\xymatrix@C=1pc @R=.8pc {
&1\ar@{-}[d]&\\
&(x\lor x')\land (y\lor y')\ar@{-}[dl]\ar@{-}[dr]&\\
x'\land (y\lor y')\ar@{-}[d]&&y\land (x\lor x')\ar@{-}[d]\\
y'\lor(x\land x')\ar@{-}[dr]&&x\lor(y\land y')\ar@{-}[dl]\\
&(x\land x')\lor(y\land y')\ar@{-}[d]&\\
&0&} 
\end{equation}
\end{proof}
An obvious consequence of Theorem \ref{thm: forb config}, together with Lemma \ref{lem: paraorthom}, is that any sp-orthomodular lattice is paraorthomodular. So, due to characterization results already provided by \cite[Theorem 10.5]{GiuMurPa}, (SP1) turns out to be equivalent to the less cumbersome quasi-equation:
\[x\leq y'\text{ and }x'\land y'\leq x\land y\text{ imply }x=y'.\tag{@}\label{gmp}\]
\begin{lemma}\label{lem: aux1}Any sp-orthomodular lattice satisfies the following condition
\[x\leq y\quad\text{implies}\quad y\land(y'\lor(x\land x'))=(y\land y')\lor(x\land x').\]
\end{lemma}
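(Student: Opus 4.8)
The plan is to establish the identity by proving the two inclusions separately. Write $L := y\land(y'\lor(x\land x'))$ for the left-hand side and $R := (y\land y')\lor(x\land x')$ for the right-hand side. Throughout I would use only the pseudo-Kleene structure (antitonicity of ${}'$, involutivity $x''=x$, and the de Morgan laws) together with the hypothesis $x\leq y$, which yields $y'\leq x'$. As it will turn out, (SP1) and the Kleene inequality are not needed here, and the whole weight of the argument rests on (SP2).

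First I would dispatch the easy inclusion $R\leq L$. Since $x\land x'\leq x\leq y$ and $y\land y'\leq y$, we get $R\leq y$; and since $y\land y'\leq y'$ while $x\land x'\leq x\land x'$, we get $R\leq y'\lor(x\land x')$. Taking the meet of these two bounds gives $R\leq L$.

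The substance is the reverse inclusion $L\leq R$, and the key move is to rewrite $R$ using (SP2) before attempting any estimate. Since $x\leq y$, (SP2) gives $R=(x'\land y)\land(x'\land y)'$, and de Morgan together with $x''=x$ turns the second factor into $(x'\land y)'=x\lor y'$. Hence it suffices to bound $L$ below both factors, i.e. to check $L\leq x'\land y$ and $L\leq x\lor y'$. Both reduce to the single observation that $L\leq y'\lor(x\land x')$: for the first, $L\leq y$ is immediate and $y'\lor(x\land x')\leq x'$ because $y'\leq x'$ and $x\land x'\leq x'$, whence $L\leq x'\land y$; for the second, $y'\lor(x\land x')\leq y'\lor x=x\lor y'$ since $x\land x'\leq x$. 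Combining, $L\leq(x'\land y)\land(x\lor y')=R$, which closes the argument.

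I expect the only genuinely non-mechanical step to be recognising that the correct upper bound for $L$ is the right-hand side of (SP2) presented in the unsimplified form $(x'\land y)\land(x'\land y)'$: once $R$ is rewritten this way, the two required inequalities both collapse onto the trivial bound $L\leq y'\lor(x\land x')$ read off from the definition of $L$, and everything else is routine monotonicity of meets and joins. Thus the hard part will be purely one of spotting the right target rather than any delicate computation.
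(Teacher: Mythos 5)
Your proof is correct and takes essentially the same route as the paper's: both handle the easy inclusion directly and reduce the substantive one to the bound $y\land(y'\lor(x\land x'))\leq(x'\land y)\land(x\lor y')$, which (SP2) identifies with $(x\land x')\lor(y\land y')$. The paper packages this as $L=L\land x'\leq(y\land x')\land(y'\lor x)$, while you split it into the two factor inequalities, but the computation is the same.
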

\begin{proof}
Suppose that  $x\leq y$. $(x\land x')\lor(y\land y')\leq y\land(y'\lor(x\land x'))$ is clear. Moreover, by SP2 one has
\begin{align*}
y\land(y'\lor(x\land x'))=& y\land(y'\lor(x\land x'))\land x'\\
\leq& (y\land x')\land(y'\lor x)\\
=& (x\land x')\lor (y\land y').
\end{align*}
\end{proof}
The next theorem shows that, indeed, (SP1) and (SP2) can be replaced by a single quasi-equation. 
\begin{theorem}\label{eqn: charact}Let $\alga$ be a pseudo-Kleene lattice. Then $\alga$ is sp-orthomodular if and only if the following quasi-equation holds:
\[x\leq y\quad\text{implies}\quad y\land(x\lor x') = x\lor(x'\land y).\label{sp}\tag{sp}\]
\end{theorem}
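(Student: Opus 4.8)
The plan is to establish the two implications separately, viewing \eqref{sp} as a single packaging of (SP1) and (SP2). Throughout write $u=x'\land y$, so that $u'=x\lor y'$, and abbreviate $p=x\land x'$, $q=y\land y'$; observe that \eqref{sp} is the paraorthomodular counterpart of the orthomodular law $y=x\lor(y\land x')$, to which it specialises when $x\lor x'=1$.

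\textbf{From \eqref{sp} to (SP1) and (SP2).} (SP1) is immediate: if $x\leq y$ and $x'\land y=p\lor q$, then \eqref{sp} gives $y\land(x\lor x')=x\lor(x'\land y)=x\lor p\lor q=x\lor q$, because $p=x\land x'\leq x$; this is exactly the conclusion of (SP1). For (SP2) I would use \eqref{sp} twice. Applying it to the comparable pair $x\leq y\lor y'$ yields $(y\lor y')\land(x\lor x')=x\lor\bigl(x'\land(y\lor y')\bigr)$, and applying it to $y'\leq x'$ rewrites the inner meet as $x'\land(y\lor y')=y'\lor(y\land x')=y'\lor u$. Substituting and using $x\lor y'=u'$ collapses the right-hand side to $u\lor u'$, whence $(y\lor y')\land(x\lor x')=u\lor u'$. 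Passing to complements through the antitone involution (De Morgan) turns this into $p\lor q=u\land u'$, which is precisely (SP2).

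\textbf{From (SP1) and (SP2) to \eqref{sp}.} Fix $x\leq y$ and set $R=x\lor(x'\land y)$ and $L=y\land(x\lor x')$, the two sides of \eqref{sp}; one checks $R\leq L$ in any lattice. The idea is to measure the ``gap'' between $R$ and $L$ and to recognise it as the common unsharpness. Since $L\land x'=y\land x'=u$ and $R'=x'\land(x\lor y')=x'\land u'$, we get $R'\land L=(x'\land L)\land u'=u\land u'$, which by (SP2) equals $p\lor q$; on the other hand $p\lor q\leq R\land L'$ by the elementary inequalities $p\leq x\leq R$, $q\leq u\leq R$, $p\leq L'=y'\lor p$ and $q\leq y'\leq L'$. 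Hence $R'\land L\leq R\land L'$. Because (SP1) is equivalent to the quasi-equation \eqref{gmp} (the characterization recalled right after Theorem \ref{thm: forb config}), applying \eqref{gmp} to the comparable pair $R\leq L$ with the verified hypothesis $R'\land L\leq R\land L'$ forces $R=L$, which is \eqref{sp}.

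\textbf{Expected obstacle.} The delicate point is that the forward direction cannot re-prove \eqref{sp} by direct lattice manipulation, since that is the goal itself; the trick is to read \eqref{sp} as the equality $R=L$ and to supply the paraorthomodular hypothesis $R'\land L\leq R\land L'$ for that particular pair, which is exactly the content of (SP2). Dually, for the converse the only real creativity lies in selecting the auxiliary pair $x\leq y\lor y'$ (together with $y'\leq x'$), after which (SP2) drops out of \eqref{sp} by De Morgan.
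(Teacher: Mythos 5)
Your proof is correct, and both implications run on the same engine as the paper's, but the forward direction is executed differently enough to be worth noting. The direction from \eqref{sp} to (SP1) and (SP2) coincides exactly with the paper's argument: (SP1) falls out by substitution, and (SP2) is obtained by applying \eqref{sp} to the auxiliary pairs $x\leq y\lor y'$ and $y'\leq x'$ and then dualizing through the antitone involution. In the converse direction you and the paper share the decisive computation $(x\lor(x'\land y))'\land(y\land(x\lor x'))=(x'\land y)\land(x'\land y)'=(x\land x')\lor(y\land y')$ via (SP2); after that you diverge. The paper applies (SP1) in its original form to the pair $R=x\lor(x'\land y)\leq L=y\land(x\lor x')$, which obliges it to verify the exact identity $R'\land L=(R\land R')\lor(L\land L')$ and hence to compute $L\land L'$ through the auxiliary Lemma \ref{lem: aux1}. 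You instead invoke the reformulation \eqref{gmp} of (SP1), for which only the inequality $R'\land L\leq R\land L'$ is needed, and this you obtain from the elementary lower bounds $x\land x'\leq x\leq R$, $y\land y'\leq x'\land y\leq R$, and $x\land x',\,y\land y'\leq L'=y'\lor(x\land x')$; all of these check out. Your route is shorter and bypasses Lemma \ref{lem: aux1} entirely, at the price of resting on the equivalence between (SP1) and \eqref{gmp}, which the paper asserts only by appeal to Theorem \ref{thm: forb config}, Lemma \ref{lem: paraorthom} and an external citation, whereas the paper's version is self-contained modulo Lemma \ref{lem: aux1}. Both arguments are sound.
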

\begin{proof}Concerning, the left-to-right direction, suppose that $x\leq y$ (and so $y'\leq x'$). Then $x\lor(x'\land y)\leq y\land(x\lor x')$. Now, we have that $(x\lor(x'\land y))'\land(y\land(x\lor x'))=x'\land(x\lor y')\land y\land(x\lor x')=(x'\land y)\land(x\lor y')=(x\land x')\lor(y\land y')$, by SP2. Moreover, note that $(x'\land x)\lor(y\land y')\leq(x'\land(x\lor y'))\land(x\lor(x'\land y))\leq (x\lor(x'\land y))'\land(y\land(x\lor x'))=(x'\land x)\lor(y\land y')$. also, by Lemma \ref{lem: aux1}, one shows that $(y\land(x\lor x'))\land(y'\lor(x\land x'))= y\land(y'\lor(x\land x'))=(y\land y')\lor(x\land x')$. Therefore, one obtains
\begin{align*}
(x\lor(x'\land y))'\land(y\land(x\lor x'))=&(y\land y')\lor(x\land x')\\
=&[(x'\land(x\lor y'))\land(x\lor(x'\land y))]\lor[(y\land(x\lor x'))\land(y'\lor(x\land x'))].
\end{align*}
By an application of SP1, we have:
\begin{align*}
y\land(x\lor x')=& [y\land(x\lor x')]\land[(x\lor x')\land(y\lor y')]\\
=& [y\land(x\lor x')]\land[(x'\land(x\lor y'))\land(x\lor(x'\land y))]'\\
=& (x\lor(x'\land y))\lor[(y\land(x\lor x'))\land(y'\lor(x\land x'))]\\
=& (x\lor(x'\land y))\lor (y\land y')\lor(x\land x')\\
=& x\lor(x'\land y).
\end{align*}
Concerning the right-to left direction, let us first prove SP1. To this aim, suppose that $x\leq y$ and $x'\land y=(x\land x')\lor(y\land y')$. By \eqref{sp}, one has $y\land(x\lor x')=x\lor(x'\land y)=x\lor(x\land x')\lor(y\land y')=x\lor(y\land y')$. As regards SP2, we have that $x\leq y\leq y\lor y'$. So, by \eqref{sp}:
$(y\lor y')\land(x\lor x')=x\lor(x'\land(y\lor y')).$ Moreover, since $y'\leq x'$, again by \eqref{sp}  we have $x'\land (y\lor y')=y'\lor(x'\land y)$. Therefore we have
\[(y\lor y')\land(x\lor x')=(x\lor y')\lor(x'\land y).\] Applying $'$ and De Morgan's laws to both sides of the above equation, the desired result obtains.
\end{proof}
A direct consequence of Theorem \ref{eqn: charact} is that super-paraorthomodularity is equational.
\begin{theorem}\label{rem: sp-equational}A pseudo-Kleene lattice $\alga$ is sp-orthomodular iff the following equation holds: \[(x\lor y)\land(x\lor x')\approx x\lor ((x\lor y)\land x').\]
\end{theorem}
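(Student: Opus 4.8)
The plan is to prove the equational characterization by showing that the proposed equation is equivalent to the quasi-equation \eqref{sp} established in Theorem \ref{eqn: charact}, thereby transferring sp-orthomodularity into a purely equational form. The key observation is that the quasi-equation \eqref{sp} has the hypothesis $x\leq y$, and the standard trick for eliminating such an order-hypothesis is to substitute $y\mapsto x\lor y$, since $x\leq x\lor y$ holds unconditionally in any lattice. So first I would verify that the displayed equation $(x\lor y)\land(x\lor x')\approx x\lor((x\lor y)\land x')$ is exactly what \eqref{sp} yields under this substitution.

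Concretely, I would argue both directions as follows. For the forward direction, assume $\alga$ is sp-orthomodular, hence satisfies \eqref{sp}. Given arbitrary $x,y\in A$, set $z:=x\lor y$. Since $x\leq z$, applying \eqref{sp} with $y$ replaced by $z$ gives $z\land(x\lor x')=x\lor(x'\land z)$, which upon unfolding $z$ is precisely $(x\lor y)\land(x\lor x')=x\lor((x\lor y)\land x')$. This shows the equation holds for all $x,y$. For the converse, assume the equation holds identically and suppose $x\leq y$. Then $x\lor y=y$, so substituting this particular $y$ into the equation collapses the left-hand side to $y\land(x\lor x')$ and the right-hand side to $x\lor(y\land x')=x\lor(x'\land y)$, which is exactly the conclusion of \eqref{sp}. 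Hence \eqref{sp} holds, and by Theorem \ref{eqn: charact} the lattice is sp-orthomodular.

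The main obstacle, such as it is, lies entirely in the converse direction and is quite mild: one must make sure that under the hypothesis $x\leq y$ the term $x\lor y$ genuinely simplifies to $y$, which is immediate from the definition of join, and that no hidden side-conditions are lost in passing from the universally quantified equation to the restricted quasi-equation. Since the substitution $y\mapsto x\lor y$ is a term-substitution into a universally valid identity, both implications are honest equivalences requiring only elementary lattice manipulations together with the already-proven Theorem \ref{eqn: charact}; no appeal to the Kleene condition, antitone involution laws, or SP2 is needed beyond what is packaged inside \eqref{sp}. The whole argument is therefore a short two-line substitution in each direction, and the only care required is bookkeeping to confirm that the simplified forms match the two sides of \eqref{sp} syntactically.
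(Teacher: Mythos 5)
Your proof is correct and is exactly the argument the paper intends: the paper states Theorem \ref{rem: sp-equational} as a direct consequence of Theorem \ref{eqn: charact}, and the intended mechanism is precisely your substitution $y\mapsto x\lor y$ to eliminate the order hypothesis, together with the observation that $x\lor y=y$ when $x\leq y$ recovers \eqref{sp}. Nothing is missing.
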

Therefore, sp-orthomodular lattices form a variety that will be denoted by $\mathcal{SPO}$. Note that $\mathcal{OML}\subsetneq\mathcal{SPO}\subsetneq\mathcal{POML}$.

The proof of the following lemma is straightforward. So it is left to the reader.
\begin{lemma}Let $\alga$ be a pseudo-Kleene lattice. Then $\alga$ is sp-orthomodular iff it satisfies the identity \[x\lor((x\lor y)\land (x\lor y)')\approx(x\lor y)\land(x\lor(x\lor y)').\]
\end{lemma}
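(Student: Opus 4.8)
The plan is to reduce the statement to the quasi-equation \eqref{sp} of Theorem \ref{eqn: charact}, which is already known to be equivalent to sp-orthomodularity; it therefore suffices to show that, over $\mathcal{PKL}$, the displayed identity is interderivable with \eqref{sp}. The key observation I would exploit is that, after setting $z:=x\lor y$ (which forces $x\leq z$, and hence $z'\leq x'$), the displayed identity is nothing but an instance of \eqref{sp} read through the involution: one applies ${}'$ to both sides of a suitable instance of \eqref{sp} and invokes De Morgan's laws, and conversely.

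For the left-to-right direction I would assume $\alga$ sp-orthomodular, so that \eqref{sp} holds, and fix arbitrary $x,y$. Putting $z:=x\lor y$ gives $z'\leq x'$, so applying \eqref{sp} to the comparable pair $z'\leq x'$ yields $x'\land(z\lor z')=z'\lor(z\land x')$. Taking complements of both sides and simplifying by $''=\mathrm{id}$ and De Morgan's laws produces $x\lor(z\land z')=z\land(x\lor z')$, which is exactly the asserted identity once $z$ is re-expanded as $x\lor y$.

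For the converse I would assume the identity holds for all $x,y$ and fix a comparable pair $u\leq v$, instantiating the identity at $x:=v'$ and $y:=u'$. Since $u\leq v$ gives $v'\leq u'$, the term $x\lor y=v'\lor u'$ collapses to $u'$, whence $(x\lor y)'=u$ and the identity becomes $v'\lor(u\land u')=u'\land(u\lor v')$. Complementing both sides and applying De Morgan's laws returns precisely $v\land(u\lor u')=u\lor(u'\land v)$, i.e. \eqref{sp} for the pair $u\leq v$; Theorem \ref{eqn: charact} then gives that $\alga$ is sp-orthomodular.

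The individual computations are routine De Morgan manipulations, so I do not expect a genuine obstacle. The only point that requires care---and the reason the substitutions are chosen as above---is to guarantee that in each direction the join $x\lor y$ collapses to the intended element (to $z$ one way, to $u'$ the other), so that complementation lands on \eqref{sp} (resp. the displayed identity) exactly, with no residual meet or join left over. Once the substitutions are pinned down, the two identities are interchanged by a single use of the involution together with De Morgan's laws.
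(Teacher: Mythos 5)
Your proof is correct: both substitutions are chosen so that the join $x\lor y$ collapses as claimed (to $z$ with $z'\leq x'$ in one direction, to $u'$ in the other), and a single complementation plus De Morgan then carries \eqref{sp} for the reversed comparable pair onto the displayed identity and back, after which Theorem \ref{eqn: charact} finishes the argument. The paper explicitly leaves this proof to the reader, and your routine reduction to \eqref{sp} via the involution is exactly the intended argument, so there is nothing further to compare.
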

The next result shows that, indeed, the paraorthomodular  (spectral) lattice $\mathbf{E}(\mathcal{H})$ of effects over a separable Hilbert space $\mathcal{H}$ is a prominent, concrete  example of a sp-orthomodular lattice.
\begin{theorem}\label{thm: spectrallattice}Let $\mathcal{H}$ be a separable Hilbert space. Then $\mathbf{E}(\mathcal{H})=(\mathcal{E}(\mathcal{H}),\land_{s},\lor_{s},{}',\mathbb{O},\mathbb{I})$ is a boundedly complete sp-orthomodular lattice.
\end{theorem}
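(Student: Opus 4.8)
The plan is to reduce everything to the characterization of super-paraorthomodularity supplied by Theorem \ref{eqn: charact}. Since the results recalled in the Introduction already guarantee that $\mathbf{E}(\mathcal{H})=(\mathcal{E}(\mathcal{H}),\land_{s},\lor_{s},{}',\mathbb{O},\mathbb{I})$ is a boundedly complete pseudo-Kleene lattice, it suffices to verify the quasi-equation \eqref{sp}: for $A,B\in\mathcal{E}(\mathcal{H})$ with $A\leq_{s}B$, one must show $B\land_{s}(A\lor_{s}A')=A\lor_{s}(A'\land_{s}B)$. By the definition (SO) of the spectral order, two effects coincide as soon as their spectral families agree at every $\lambda\in\mathbb{R}$; hence the whole problem transfers to computations inside the complete orthomodular lattice $\mathbf{P}(\mathcal{H})$ of projections, exactly in the spirit of the proof of Lemma \ref{lem:finitedimmodular}.

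First I would unwind the componentwise definitions of $\land_{s}$ and $\lor_{s}$, using that $A\leq_{s}B$ means $B_{\mu}\leq A_{\mu}$ for every $\mu$. This yields
\[
[B\land_{s}(A\lor_{s}A')]_{\lambda}=\bigwedge_{\mu>\lambda}\bigl(B_{\mu}\lor(A_{\mu}\land A'_{\mu})\bigr),\qquad
[A\lor_{s}(A'\land_{s}B)]_{\lambda}=A_{\lambda}\land\bigwedge_{\mu>\lambda}(A'_{\mu}\lor B_{\mu}),
\]
so the task becomes proving these two projections are equal for each $\lambda$.

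The crux is the fixed-$\mu$ identity in $\mathbf{P}(\mathcal{H})$, namely $B_{\mu}\lor(A_{\mu}\land A'_{\mu})=A_{\mu}\land(A'_{\mu}\lor B_{\mu})$. The key observation is that the spectral family $\{A_{\nu}\}_{\nu\in\mathbb{R}}$ of $A$ is a chain, hence a family of pairwise commuting projections; since $A'_{\mu}=\bigl(\bigvee_{\nu<1-\mu}A_{\nu}\bigr)'$ is built from this chain, $A_{\mu}$ commutes with $A'_{\mu}$. Moreover $B_{\mu}\leq A_{\mu}$ gives $A_{\mu}\C{}B_{\mu}$, comparable elements being always compatible. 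Thus $A_{\mu}$ commutes with both $B_{\mu}$ and $A'_{\mu}$, so by the Foulis--Holland Theorem \ref{fuli} the set $\{A_{\mu},B_{\mu},A'_{\mu}\}$ generates a distributive sublattice, inside which
\[
B_{\mu}\lor(A_{\mu}\land A'_{\mu})=(B_{\mu}\lor A_{\mu})\land(B_{\mu}\lor A'_{\mu})=A_{\mu}\land(A'_{\mu}\lor B_{\mu}),
\]
the last step using $B_{\mu}\leq A_{\mu}$.

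Substituting this identity into the first spectral family and distributing the infimum over the meet gives
\[
[B\land_{s}(A\lor_{s}A')]_{\lambda}=\Bigl(\bigwedge_{\mu>\lambda}A_{\mu}\Bigr)\land\bigwedge_{\mu>\lambda}(A'_{\mu}\lor B_{\mu}).
\]
Right continuity (axiom b.\ of spectral families), $\bigwedge_{\mu>\lambda}A_{\mu}=A_{\lambda}$, then collapses the first factor to $A_{\lambda}$, matching $[A\lor_{s}(A'\land_{s}B)]_{\lambda}$; as $\lambda$ is arbitrary, \eqref{sp} holds and $\mathbf{E}(\mathcal{H})$ is sp-orthomodular. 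I expect the only delicate point to be the justification that $A_{\mu}$ commutes with $A'_{\mu}$, i.e.\ that compatibility in $\mathbf{P}(\mathcal{H})$ survives the infinite join concealed in the complement formula; here one leans on the Hilbert-space fact that an operator commuting with every member of a family commutes with the projection onto the join of their ranges, and with its orthocomplement. Everything else is bookkeeping with componentwise lattice operations and right continuity.
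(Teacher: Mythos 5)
Your proposal is correct and follows essentially the same route as the paper's proof: reduce \eqref{sp} to a componentwise identity among projections via the definitions of $\land_{s}$ and $\lor_{s}$, use the fact that the spectral family is a chain (so its members commute with one another, with $A'_{\mu}$ via Beran's theorem on commutation with arbitrary joins, and with the comparable $B_{\mu}$), apply Foulis--Holland to distribute at each fixed $\mu$, and finish with right continuity. The only cosmetic differences are that you start from the other side of the identity and keep $A'_{\mu}$ packaged rather than unfolding it as $\bigwedge_{\nu<1-\mu}(\mathbb{I}-A_{\nu})$ as the paper does.
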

\begin{proof}It is essentially shown in \cite{DeGroote} (cf. \cite{GiuLePa}) that $\mathbf{E}(\mathcal{H})$ is a boundedly complete pseudo-Kleene lattice. As regards \eqref{sp}, let $A,B\in\mathcal{E}(\mathcal{H})$ with $B\leq_{s}A$. This means that, for any $\lambda\in\mathbb{R}$, one has $A_{\lambda}\leq B_{\lambda}$. Fix $\lambda\in\mathbb{R}$. One has:
\begin{align*}
[B\lor_{s}((\mathbb{I}-B)\land_{s}A)]_{\lambda}=& B_{\lambda}\land [(\mathbb{I}-B)\land_{s}A)]_{\lambda}&\\
=& B_{\lambda}\land\bigwedge_{\mu>\lambda}((\mathbb{I}-B)_{\mu}\lor A_{\mu})&\\
=& B_{\lambda}\land\bigwedge_{\mu>\lambda}[(\mathbb{I}-\bigvee_{\nu<1-\mu}B_{\nu})\lor A_{\mu}]&\\
=& B_{\lambda}\land\bigwedge_{\mu>\lambda}[(\bigwedge_{\nu<1-\mu}\mathbb{I}-B_{\nu})\lor A_{\mu}]&\\
=& \bigwedge_{\mu>\lambda}B_{\mu}\land\bigwedge_{\mu>\lambda}[(\bigwedge_{\nu<1-\mu}(\mathbb{I}-B_{\nu}))\lor A_{\mu}]&\\
=& \bigwedge_{\mu>\lambda}(B_{\mu}\land[(\bigwedge_{\nu<1-\mu}(\mathbb{I}-B_{\nu}))\lor A_{\mu}]).&\\
\end{align*}
Now, let us observe that, due to the monotonicity of a spectral family, projections in its image are comparable. Therefore, they commute, by basic properties of orthomodular lattices. So, one has that $B_{\mu}$ commutes with $B_{\nu}$, for any $\nu<1-\mu$. By elementary properties of commutativity in OMLs (see e.g. \cite[Proposition 2.2]{BH00}), one has that $B_{\mu}$ commutes with $\mathbb{I}-B_{\nu}$, for any $\nu<1-\mu$. By \cite[Theorem 2.14, p. 311]{Beran}, it follows that $B_{\mu}$ commutes with $\bigwedge_{\nu<1-\mu}(\mathbb{I}-B_{\nu})$. Moreover, since $A_{\mu}\leq B_{\mu}$, $B_{\mu}$ commutes with $A_{\mu}$ as well. Therefore, by the Foulis-Holland Theorem, we have $\bigwedge_{\mu>\lambda}(B_{\mu}\land[(\bigwedge_{\nu<1-\mu}(\mathbb{I}-B_{\nu}))\lor A_{\mu}])=\bigwedge_{\mu>\lambda}((B_{\mu}\land(\bigwedge_{\nu<1-\mu}(\mathbb{I}-B_{\nu})))\lor (B_{\mu}\land A_{\mu}))=\bigwedge_{\mu>\lambda}((B_{\mu}\land(\bigwedge_{\nu<1-\mu}(\mathbb{I}-B_{\nu})))\lor A_{\mu})=\bigwedge_{\mu>\lambda}((B_{\mu}\land(\mathbb{I}-B)_{\mu})\lor A_{\mu})=\bigwedge_{\mu>\lambda}[(B\lor_{s}(\mathbb{I}-B))_{\mu}\lor A_{\mu}]=[(B\lor_{s}(\mathbb{I}-B))\land_{s}A]_{\lambda}$. We conclude that, for any $\lambda\in\mathbb{R}$, $[B\lor_{s}((\mathbb{I}-B)\land_{s}A)]_{\lambda}=[(B\lor_{s}(\mathbb{I}-B))\land_{s}A]_{\lambda}$, i.e. $B\lor_{s}((\mathbb{I}-B)\land_{s}A)=(B\lor_{s}(\mathbb{I}-B))\land_{s}A$.
\end{proof}
Examples of sp-orthomodular lattices can be provided by means of a slight modification of  Moisil’s construction of a 3-valued LM algebra (which
is, in particular, a Kleene lattice) from a Boolean algebra, see \cite{Boicescu,Cignoli}. Let $\alga\in\mathcal{OML}$. Consider the set \[A^{[2]}=\{(x,y)\in A^{2}:x\leq y\}.\]
\begin{proposition}Let $\alga=(A,\land,\lor,',0,1)$ be an orthomodular lattice. Then 
$\alga^{[2]} := (A^{[2]},\lor,\land,\sim,(0,0),(1,1))$ is a sp-orthomodular lattice where, for $(a, b), (c, d)\in A^{[2]}$:

\[(a, b) \lor (c, d) := (a \lor c, b \lor d);\ 
(a, b) \land (c, d) := (a \land c, b \land d);\
\sim{(a, b)} := (b', a').\]

\end{proposition}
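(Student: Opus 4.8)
The plan is to show first that $\alga^{[2]}$ is a pseudo-Kleene lattice, and then to establish super-paraorthomodularity through the quasi-equation \eqref{sp} of Theorem \ref{eqn: charact}.

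First I would confirm that the three operations are well defined on $A^{[2]}$: if $a\leq b$ and $c\leq d$ then $a\lor c\leq b\lor d$, $a\land c\leq b\land d$, and $b'\leq a'$, so $A^{[2]}$ is closed under $\lor$, $\land$ and $\sim$. Consequently $\alga^{[2]}$ is a sublattice of the product $\alga^{\ell}\times\alga^{\ell}$, hence a bounded lattice with least element $(0,0)$ and greatest element $(1,1)$, and a routine check shows that $\sim$ is an antitone involution satisfying De Morgan's laws. For the Kleene condition I would compute, for $(a,b)\in A^{[2]}$, that $(a,b)\land\sim(a,b)=(a\land b',\,b\land a')=(0,\,b\land a')$, using that $a\leq b$ forces $a\land b'=0$ in the ortholattice $\alga$, and dually that $(c,d)\lor\sim(c,d)=(c\lor d',\,1)$. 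Since the former always has first coordinate $0$ and the latter always has second coordinate $1$, the inequality $(a,b)\land\sim(a,b)\leq(c,d)\lor\sim(c,d)$ holds for all elements, so $\alga^{[2]}\in\mathcal{PKL}$.

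Next, by Theorem \ref{eqn: charact} it suffices to verify \eqref{sp}. I would take $(a,b)\leq(c,d)$ in $A^{[2]}$, so that $a\leq c$ and $b\leq d$, together with the standing constraints $a\leq b$ and $c\leq d$. Writing $\sim(a,b)=(b',a')$ and expanding both sides of \eqref{sp} coordinatewise reduces the claim to the two identities in $\alga$
\begin{align*}
c\land(a\lor b')&=a\lor(b'\land c),\\
d&=b\lor(a'\land d).
\end{align*}
The second identity follows from orthomodularity: the orthomodular law applied to $b\leq d$ gives $d=b\lor(b'\land d)$, and since $a\leq b$ yields $b'\leq a'$ we obtain $d=b\lor(b'\land d)\leq b\lor(a'\land d)\leq d$, forcing equality.

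The heart of the argument, and the step I expect to be the main obstacle, is the first identity. Here the inequality $a\lor(b'\land c)\leq c\land(a\lor b')$ is immediate from $a\leq c$, but the reverse requires a distributivity not available in $\alga$ in general. The plan is to obtain it from commutativity: since $a\leq c$ the comparable elements $a$ and $c$ commute, and since $a\leq b$ we have $a\,\C{}\,b$, whence $a\,\C{}\,b'$. Thus $a$ commutes with both $b'$ and $c$, so by the Foulis--Holland Theorem \ref{fuli} the set $\{a,b',c\}$ generates a distributive sublattice of $\alga$; within it $c\land(a\lor b')=(c\land a)\lor(c\land b')=a\lor(b'\land c)$, using $a\leq c$. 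With both coordinate identities in hand, \eqref{sp} holds, and therefore $\alga^{[2]}$ is sp-orthomodular.
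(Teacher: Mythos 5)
Your proposal is correct and follows essentially the same route as the paper: verify the pseudo-Kleene axioms (with the same computation of $(a,b)\land\sim(a,b)=(0,b\land a')$ and its dual), then check \eqref{sp} coordinatewise, using Foulis--Holland on $\{a,b',c\}$ for the first coordinate. The only cosmetic difference is that for the second coordinate you invoke the orthomodular law on $b\leq d$ directly, whereas the paper uses Foulis--Holland again via $b\,\C{}\,d$ and $b\,\C{}\,a'$; both are valid.
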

\begin{proof}Showing that $\alga^{[2]}\in\mathcal{PKL}$ is customary and so it is left to the reader. For example, we show that $\alga^{[2]}$ satisfies regularity. Let $(a,b), (c,d)\in A^{[2]}$. Note that, since $a\leq b$ and $c\leq d$, one has $a\land b'=0$ and $d\lor c'=1$. Therefore, we have $(a,b)\land\sim(a,b)=(a,b)\land(b',a')=(0,b\land a')\leq(c\lor d',1)=(c\lor d', d\lor c')=(c,d)\lor (d',c')=(c,d)\lor\sim(c,d)$. Let us prove \eqref{sp}. Suppose that $(a,b)\leq (c,d)$. This means that $a\leq c\leq d$ as well as $a\leq b\leq d$. We have
\begin{align*}
(c,d)\land((a,b)\lor\sim(a,b)) &= (c,d)\land (a\lor b',b\lor a')\\
&= (c,d)\land (a\lor b',1)\\
&= (c\land(a\lor b'),d)\\
&= ((c\land a)\lor (c\land b'),d)\\
&= (a\lor(c\land b'),(d\lor b)\land (a'\lor b))\\
&= (a\lor(c\land b'),b\lor (a'\land d))\\
&= (a,b)\lor(c\land b',a'\land d)\\
&= (a,b)\lor((c,d)\land(b',a'))\\
&= (a,b)\lor((c,d)\land\sim(a,b)).
\end{align*}
Note that the distributivity applied in the proof follows directly by Theorem \ref{fuli}, upon noticing that $a\mathrm{C}c$, $a\mathrm{C}b'$, $b\mathrm{C}d$  and $b\mathrm{C}a'$. 
\end{proof}
It is well known that, given $\alga\in\mathcal{OML}$, upon setting 
\begin{equation}
 x\odot y:=y\land(x\lor y')\text{ and }x\to y:=x'\lor(x\land y),
\label{defsasak}
\end{equation}
one has that $\odot$ and $\to$ form a \emph{left-residuated} pair, namely, for any $x,y,z\in A$, one has
\begin{equation}
x\odot y\leq z\text{ iff }x\leq y\to z.\label{ressasaki} 
\end{equation}
See \cite{Finch,Chajda1,FaLePa1} for details. Moreover, for any $\alga\in\mathcal{OL}$, upon defining $\odot$ and $\to$ as in \eqref{defsasak}, $\alga$ satisfies \eqref{ressasaki} if and only if $\alga\in\mathcal{OML}$, see e.g. \cite{StJohn}. Now, it naturally raises the question if an analogous characterization holds once $\mathcal{OL}$ is replaced by $\mathcal{PKL}$. And the answer is positive, and very easy to achieve.
\begin{remark}Let $\alga\in\mathcal{PKL}$. Then, upon defining $\odot$ and $\to$ as in \eqref{defsasak}, then $\odot$ and $\to$ satisfy \eqref{ressasaki} if and only if $\alga\in\mathcal{OML}$. In fact, the right-to-left direction follows by \cite{StJohn}. Conversely, one has that $0\leq x\to y$ implies $x\land x' = 0\odot x\leq y$. So $x\land x'=0$, i.e. $\alga\in\mathcal{OL}$. So, our conclusion follows again by \cite{StJohn}.
\end{remark}
However, the same result no longer holds once the following operations are considered:
\begin{equation}\label{defresoperat1}
x\odot y=\begin{cases} 0, & \mbox{if}\ x\leq y' \\ y\land(x\lor y'), & \mbox{otherwise}.
\end{cases}
\end{equation}

\begin{equation}
x\rightarrow y=\begin{cases} 1, & \mbox{if}\ x\leq y \\ x'\lor(x\land y), & \mbox{otherwise}.
\end{cases}\label{defresoperat2}
\end{equation}

In \cite{FaCha}, the problem of providing sufficient (and, possibly, necessary) conditions for a (modular) paraorthomodular lattice $\alga$ to be organized into a \emph{left-residuated $\ell$-groupoid} was considered. 
\begin{definition}
An algebra $\alga=(A,\land,\lor,\odot,\rightarrow,0,1)$ of type $(2,2,2,2,0,0)$ is called a \emph{left-residuated $\ell$-groupoid} if:
\begin{itemize}
\item[(i)] $(A,\land,\lor,0,1)$ is a bounded lattice;
\item[(ii)] $x\odot 1=x=1\odot x$, for any $x\in A$;
\item[(iii)] $x\odot y\leq z$ if and only if $x\leq y\rightarrow z$ (left-residuation), for any $x,y,z\in A$.
\end{itemize}
\end{definition}
In particular, it was shown (\cite[Theorem 3.6]{FaCha}) that any $\alga\in\mathcal{MPKL}$ satisfying the condition \[x\nleq y\text{ and }y\nleq x\text{ imply }x\land x' = y\land y'\] can be turned into a left-residuated $\ell$-groupoid once operations in \eqref{defresoperat1} and \eqref{defresoperat2} are defined. Such a condition becomes also necessary once Kleene lattices, and Boolean-like variants of \eqref{defresoperat1} and \eqref{defresoperat2} over them, are considered (cf. \cite[Theorem 5.3]{FaCha}).\\
However, \cite{FaCha} left open the problem of providing sufficient and necessary conditions for an \emph{arbitrary} pseudo-Kleene lattice to be converted into a left-residuated $\ell$-groupoid upon setting operations as in \eqref{defresoperat1} and \eqref{defresoperat2}. The next result shows that, indeed, the notion of an sp-orthomodular lattice allows to answer the above question in the positive.
\begin{theorem}\label{thm: resleft}Let $\alga\in\mathcal{PKL}$. The following are equivalent:
\begin{enumerate}
\item Upon setting $x\odot y$ and $x\rightarrow y$ as in \eqref{defresoperat1} resp. \eqref{defresoperat2}, the structure $R(\alga)=(A,\land,\lor,\odot,\rightarrow,0,1)$ is a left-residuated $\ell$-groupoid;
\item The following hold:
\begin{enumerate}[(i)]
 \item $\alga\in\SPO$;
 \item For any $x,y\in A$, if $x\nleq y$, then $x\leq y\lor y'$ or $y'\leq x\lor y$.
\end{enumerate}
\end{enumerate}
\end{theorem}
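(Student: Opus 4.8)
The plan is to first dispatch conditions (i) and (ii) of the definition of a left-residuated $\ell$-groupoid, which are immediate: $(A,\land,\lor,0,1)$ is a bounded lattice because $\alga\in\mathcal{PKL}$, and a one-line inspection of \eqref{defresoperat1}--\eqref{defresoperat2} gives $x\odot 1=x=1\odot x$ (the case $x=0$ handled by the first branch, the case $x\neq 0$ by the second). Hence (1) is equivalent to the single left-residuation law $x\odot y\leq z\iff x\leq y\rightarrow z$. I would then reduce this biconditional by a routine check of the four branch-combinations of \eqref{defresoperat1}--\eqref{defresoperat2}: if $x\leq y'$ then $x\odot y=0\leq z$ and $x\leq y'\leq y\rightarrow z$, while if $y\leq z$ then $y\rightarrow z=1\geq x$ and $x\odot y\leq y\leq z$, so in three of the four combinations both sides hold and the biconditional is trivial. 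The only substantive case is $x\nleq y'$ and $y\nleq z$, where $\odot$ and $\rightarrow$ are the Sasaki terms and residuation reads \[ y\land(x\lor y')\leq z\quad\Longleftrightarrow\quad x\leq y'\lor(y\land z). \] Everything below concerns this case.

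For (2)$\Rightarrow$(1), assume $\alga\in\SPO$ and (ii) and establish the displayed adjunction. For the forward implication, suppose $y\land(x\lor y')\leq z$. Applying (ii) to $x\nleq y'$ gives $x\leq y\lor y'$ or $y\leq x\lor y'$; the latter forces $y\land(x\lor y')=y\leq z$, contradicting $y\nleq z$, so $x\leq y\lor y'$. Since $y\land(x\lor y')\leq y\land z$, we get $y'\lor(y\land(x\lor y'))\leq y'\lor(y\land z)$; rewriting the left-hand side by \eqref{sp} (applied to $y'\leq x\lor y'$) as $(x\lor y')\land(y\lor y')$ and using $x\leq(x\lor y')\land(y\lor y')$ yields $x\leq y'\lor(y\land z)$. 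For the converse, monotonicity reduces the claim to $y\land(y'\lor(y\land z))\leq z$. Taking complements and using \eqref{sp} (on $y'\leq y'\lor z'$) I would show the complement of the left-hand side equals $(y'\lor z')\land(y\lor y')$, so the inequality is equivalent to $z'\leq y\lor y'$. Here $x\nleq y'$ together with $x\leq y'\lor(y\land z)$ forces $y\land z\nleq y'$, so applying (ii) to $z'\nleq y'$ (equivalently $y\nleq z$) yields $z'\leq y\lor y'$ outright: the alternative $y\leq z'\lor y'$ would give, on complementing, $y\land z\leq y'$, a contradiction.

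For (1)$\Rightarrow$(2), assume residuation. Condition (ii) comes from the instance $y=b'$, $z=a'$: with $x=b\lor(a'\land b')=b'\rightarrow a'$, residuation gives $x\odot b'\leq a'$, and when $a'\land b'\nleq b$ this reads $b'\land(b\lor(a'\land b'))\leq a'$, equivalent by the same complement computation to $a\leq b\lor b'$; when instead $a'\land b'\leq b$, complementing gives $b'\leq a\lor b$. Either way (ii) holds. To recover $\SPO$ I would fix $a\leq b$ (assuming $a\neq b$, the equal case being trivial) and run two residuation instances with $y=b$ and $z=a\lor(a'\land b)$, for which $b\rightarrow z=b'\lor(b\land z)=a\lor b'\lor(a'\land b)$ (branch (d), since $b\leq z$ would give $b=z$ and \eqref{sp} directly). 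First, with $x=a'$: as $a'\odot b=a'\land b\leq z$ trivially, residuation yields $a'\leq a\lor b'\lor(a'\land b)$, whence $a\lor a'\leq b\rightarrow z$. Second, with $x=a\lor a'$: this last inequality is the right-hand side of residuation, so its left-hand side $b\land(a\lor a')\leq a\lor(a'\land b)$ follows, which is exactly \eqref{sp}. By Theorem~\ref{eqn: charact}, $\alga\in\SPO$.

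The main obstacle is the single nontrivial branch $x\nleq y'$, $y\nleq z$: away from the orthomodular case the correction terms $y\land y'$ and $y\lor y'$ no longer collapse to $0$ and $1$, and controlling them is precisely where (ii) enters, invariably through the De Morgan identities that convert a disjunct such as $y\leq z'\lor y'$ into the usable $y\land z\leq y'$. Super-paraorthomodularity, in the guise of \eqref{sp} (cf. also Lemma~\ref{lem: aux1}), supplies the rewritings of the Sasaki composites, e.g. $y'\lor(y\land(x\lor y'))=(x\lor y')\land(y\lor y')$. The remaining labour is branch bookkeeping, i.e. checking that every degenerate sub-case (where one side of residuation collapses to $0$ or $1$, or where $a=b$) is consistent; this is routine but must be carried out explicitly.
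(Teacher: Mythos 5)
Your proposal is correct, and in the direction (2)$\Rightarrow$(1) it is essentially the paper's argument: the same reduction to the single branch $x\nleq y'$, $y\nleq z$, the same use of (ii) to rule out $y\leq x\lor y'$ and obtain $x\leq y\lor y'$, and the same \eqref{sp}-rewriting $y'\lor(y\land(x\lor y'))=(x\lor y')\land(y\lor y')$; your complementation trick reducing $y\land(y'\lor(y\land z))\leq z$ to $z'\leq y\lor y'$ is a slightly cleaner packaging of the paper's identity $y\land(y'\lor(y\land z))=(y\land y')\lor(y\land z)$ followed by the case split on $z'$. Where you genuinely diverge is in (1)$\Rightarrow$(2), through the choice of residuation instances. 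The paper proves (i) first, via $(y\land(x\lor x'))\odot x'=x'\land y\leq y$, and then derives (ii) from $x\odot y'\leq y\lor x$, \emph{using (i)} to evaluate $y'\to(y\lor x)$. You instead prove (ii) from $(b'\to a')\odot b'\leq a'$ — an argument independent of (i), which is a small structural gain — and recover \eqref{sp} from two instances aimed at $z=a\lor(a'\land b)$ (first $x=a'$ to get $a\lor a'\leq b\to z$, then $x=a\lor a'$ to close the loop), whereas the paper needs only one. Both routes are valid; yours trades one extra residuation call for a cleaner dependency structure. One caveat: in your proof of (ii), the phrase ``equivalent by the same complement computation'' overstates what you may use, since at that stage \eqref{sp} is not yet available; fortunately only the implication $b'\land(b\lor(a'\land b'))\leq a'\Rightarrow a\leq b\lor b'$ is needed, and that follows sp-free from $a\leq b\lor(b'\land(a\lor b))\leq b\lor b'$, so the argument stands once this is said explicitly.
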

\begin{proof}(1)$\Rightarrow$(2). As regards (i), suppose that $x\leq y$. We show that $y\land (x\lor x')\leq x\lor(y\land x')$. Let us assume w.l.o.g. that $y\land(x\lor x')\nleq x$. One has that $(y\land(x\lor x'))\odot x'=x'\land(x\lor(y\land(x\lor x')))=x'\land(y\land(x\lor x'))=x'\land y\leq y$. By left-residuation, one has $y\land(x\lor x')\leq x'\to y$. Now, if $x'\leq y$, then $y\land(x\lor x')=x\lor x'=x\lor (x'\land y)$. Otherwise, one has that $y\land(x\lor x')\leq x\lor(x'\land y)$. Concerning (ii), suppose that $x\nleq y$. Therefore $x\odot y' = y'\land(y\lor x)\leq y\lor x$. So $x\leq y'\to(y\lor x)$. If $y'\nleq x\lor y$, then $y'\to(y\lor x)=y\lor(y'\land(x\lor y))=(x\lor y)\land(y\lor y')$ (by (i) and \eqref{sp}). We conclude $x\leq y\lor y'$.\quad\\
(2)$\Rightarrow$(1). If $x=0$, then $x\odot 1=x=1\odot x$. If $x\ne 0$, then $x\odot 1=1\land(x\lor 0)=x=x\land(1\lor x')=1\odot x$. Let us prove left-residuation. Let $x\odot y\leq z$. We can assume w.l.o.g. that $y\nleq z$ (otherwise $x\leq y\to z=1$) as well as $x\nleq y'$ (otherwise $x\leq y'\lor(y\land z)$ would be trivially satisfied). We have $y\land(x\lor y')\leq z$ and so, by \eqref{sp}, $(y\lor y')\land(x\lor y')=y'\lor(y\land(y'\lor x))\leq y'\lor(y\land z)$. Now, by (ii), since $x\nleq y'$, two cases are possible. (a) $x\leq y\lor y'$; (b) $y\leq x\lor y'$. If (a), then one has that $x\leq x\lor y'\leq y'\lor (y\land z)=y\to z$. If (b), one would have $y\leq z$, which is impossible by hypothesis. Now, assume that $x\leq y\to z$. If $y\leq z$ or $x\leq y'$, then we are done. So let us consider the case $x\nleq y'$ and $y\nleq z$. We have
\begin{align*}
x\odot y =& y\land(x\lor y')\\
\leq & y\land(y'\lor(y\land z))\\
= & (y\land y')\lor(y\land z).  
\end{align*}
Since $y\nleq z$, one has also $z'\nleq y'$. Two cases are possible: $z'\leq y\lor y'$. So $y\land y'\leq z$, and so $x\odot y\leq y\land z\leq z$. If $y\leq z'\lor y'$, then $z\land y\leq y'$. Therefore, by hypothesis, $x\leq y'$, a contradiction. 
\end{proof}
Furthermore, we observe that the class $\mathcal{RSP}$ of sp-orthomodular lattices satisfying  2(ii) of Theorem \ref{thm: resleft} does not form a quasi-variety.
\begin{example}Consider an isomorphic copy of the sp-orthomodular (indeed, Kleene) lattice $\K_{3}$. A routine check shows that it satisfies condition 2(ii) of Theorem \ref{thm: resleft}. However, upon taking the direct product $\K_{3}\times\algb_{2}$ (where $\algb_{2}$ is the two-element Boolean algebra), this is no longer the case. In fact, if we consider the pairs $a=(1,0),b=(\frac{1}{2},0)$, we have $a\nleq b$ but neither $a\leq b\lor b'=b'=(\frac{1}{2},1)$, nor $b'\leq a\lor b=a$. Therefore, $\mathcal{RSP}$ is not closed under direct products.
\end{example}
The next result shows that, indeed, sp-orthomodular lattices coincide with the class of tame pseudo-Kleene lattices. 
 
\begin{theorem}\label{thm:SPOpastings}Let $\alga\in\mathcal{PKL}$. Then $\alga\in\mathcal{SPO}$ iff $\alga$ is a pasting of $\mathfrak{B}(\alga)=\{\K_{i}\}_{i\in I}$.
\end{theorem}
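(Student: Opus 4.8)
The plan is to establish both directions, deferring the real work to the forward implication. For \emph{pasting $\Rightarrow$ sp-orthomodular} I would invoke Theorem~\ref{eqn: charact} and verify only the quasi-equation \eqref{sp}. The first point is that on a Kleene block $\K$ the lattice operations agree with those of $\alga$: closure under $\land$ is part of the definition of a Kleene sublattice, and closure under $\lor$ follows from De Morgan, since $x\lor y=(x'\land y')'$ and $x',y'\in K$. Now if $x\leq y$ in $\alga$, tameness yields a block $\K$ with $x\leq^{\K}y$; distributivity of $\K$ then gives $y\land(x\lor x')=(y\land x)\lor(y\land x')=x\lor(x'\land y)$, i.e.\ \eqref{sp}. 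Since \eqref{sp} constrains only comparable pairs and every such pair lives in a block, $\alga$ satisfies \eqref{sp} throughout, so $\alga\in\SPO$.

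For the converse \emph{sp-orthomodular $\Rightarrow$ pasting}, recall that the inclusion $x\leq^{\K}y\Rightarrow x\leq y$ is automatic, so the substantive claim is that every comparable pair sits inside a common block. Fixing $x\leq y$, I would form the sub-pseudo-Kleene lattice $\K_{0}$ generated by $\{x,y\}$ under $\land,\lor,{}'$; being closed under the operations, $\K_{0}$ inherits the antitone involution and the Kleene condition from $\alga$ and automatically satisfies condition~(1) of a Kleene sublattice, so the only real content is that $\K_{0}$ is \emph{distributive}. An application of Zorn's Lemma then extends $\K_{0}$ to a maximal Kleene sublattice $\K\in\mathfrak{B}(\alga)$ with $x\leq^{\K}y$, as required. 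The identities available in $\SPO$ pin down the order on $\K_{0}$: writing $m_{x}=x\land x'$, $M_{x}=x\lor x'=m_{x}'$, $m_{y}=y\land y'$, $M_{y}=y\lor y'=m_{y}'$, $c=x'\land y$ and $c'=x\lor y'$, one records $x\land y'=m_{x}\land m_{y}$ together with its dual $x'\lor y=M_{x}\lor M_{y}$, the bounds $m_{x}\lor m_{y}\leq c,c'\leq M_{x}\land M_{y}$, and ---via SP2--- the equality $c\land c'=m_{x}\lor m_{y}$, whence $c\lor c'=(c\land c')'=M_{x}\land M_{y}$ by involution; thus $\{m_{x}\lor m_{y},c,c',M_{x}\land M_{y}\}$ is a four-element Boolean algebra. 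The ``mixed'' elements are governed by \eqref{sp} and its order-dual, which give $x\lor c=y\land M_{x}$ and $y'\lor c=x'\land M_{y}$, while Lemma~\ref{lem: aux1} controls the remaining meets, e.g.\ $y\land(y'\lor m_{x})=m_{x}\lor m_{y}$, and their duals.

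The main obstacle is precisely the verification that this finite collection ---the displayed elements together with their pairwise joins and meets--- is closed under $\land,\lor,{}'$ and carries a distributive order. The cleanest finish is to check that $\K_{0}$ contains no copy of the diamond $M_{3}$ or the pentagon $N_{5}$, so that distributivity follows from Birkhoff's criterion; the relations recorded above are designed exactly to collapse any would-be $N_{5}$ or $M_{3}$. Equivalently, and more in the spirit of the paper, one may argue by contraposition through Theorem~\ref{thm: forb config}: were $\K_{0}$ non-distributive, then applying $'$ and the sp-identities along the lines of the proof of that theorem would extract a subalgebra of $\alga$ isomorphic to $\algb_{6}$ or $\algb_{8}$, contradicting sp-orthomodularity. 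As a sanity check this is tight: in $\algb_{8}$ the pair $x\leq y$ already generates all of $\algb_{8}$, and $\{x\land x',x,y,x',x\lor x'\}\cong N_{5}$, which is exactly why $\algb_{8}$ is excluded. Either route reduces everything to \eqref{sp}, SP2, and Lemma~\ref{lem: aux1}, so the difficulty is bookkeeping rather than conceptual.
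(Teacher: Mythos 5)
Your right-to-left direction (tame $\Rightarrow$ \eqref{sp}) is correct and is exactly the paper's argument. For the converse, your skeleton also matches the paper's: fix $x\leq y$, show the subalgebra generated by $\{x,y\}$ is a Kleene sublattice, and extend it to a block by Zorn's Lemma; the identities you record ($x\land y'=(x\land x')\land(y\land y')$, $c\land c'=m_{x}\lor m_{y}$ via SP2, $y\land M_{x}=x\lor c$ via \eqref{sp}, Lemma~\ref{lem: aux1}) are precisely the ones the paper uses. But the step you label ``the main obstacle'' --- closure of the generated set under $\land,\lor,{}'$ and its distributivity --- is the entire content of the paper's proof (an explicit $19$-element sublattice with every meet and join computed), and neither of your two proposed shortcuts actually discharges it. The $M_{3}/N_{5}$ route presupposes that you already know the elements of $\K_{0}$, i.e.\ that the closure computation has been done; without it, $\K_{0}$ could a priori be much larger than the displayed elements, and you have nothing to test Birkhoff's criterion against.

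The second route is worse than incomplete: it is unsound as stated. Theorem~\ref{thm: forb config} characterizes only (SP1), and a failure of distributivity in the subalgebra generated by a comparable pair does \emph{not} in general produce a subalgebra isomorphic to $\algb_{6}$ or $\algb_{8}$. This is exactly why Theorem~\ref{thm: forbiddenconfigur} must additionally forbid \emph{quotients of subalgebras} isomorphic to $\algb_{8}^{*}$ and $\algb_{10}$: the paper even proves that $\mathcal{V}(\algb_{8}^{*})$ contains no member with a $\algb_{6}$ or $\algb_{8}$ subalgebra although $\algb_{8}^{*}\notin\SPO$, and in $\algb_{8}^{*}$ and $\algb_{10}$ a comparable pair generates a non-distributive subalgebra. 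So ``non-distributive $\K_{0}$ $\Rightarrow$ $\algb_{6}$ or $\algb_{8}$ inside $\alga$'' is false in $\mathcal{PKL}$, and repairing it would require the full case analysis of Theorem~\ref{thm: forbiddenconfigur} (which is proved later and is itself long). A smaller omission: the paper also handles the degenerate case in which the listed elements are not pairwise distinct, observing that the generated subalgebra is then a homomorphic image of the $19$-element lattice and hence still distributive; your sketch does not address this. In short, the approach is the right one, but the proof as written has a genuine gap where the paper does its real work.
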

\begin{proof}
Concerning the right-to-left direction, let $x,y\in A$ be such 
that $x\leq y$. Then, for some $i\in I$, $x\leq^{\K_{i}} y$. 
Therefore $y\land(x\lor x')=(y\land x)\lor(y\land 
x')=x\lor(y\land x')$, since $\K_{i}$ is a distributive 
subalgebra of $\alga$. Let us prove the converse direction. 
Suppose that $a\leq b$. We show that the set ${K}=\{b'\land a,a\land 
a', b\land b', (a\land a')\lor(b\land b'),a,b',a\lor(b\land 
b'),b'\lor(a\land a'),a'\land b,a\lor b',a'\land(b\lor 
b'),b\land(a\lor a'),a',b,(a\lor a')\land(b\lor b'),a\lor 
a',b\lor b', a'\lor b, 0, 1\}$ is the universe of a subalgebra $\K$ of $\alga$ which is distributive. First, by a direct inspection, we observe that $K$ is closed under ${}',0$ and $1$. To show that, indeed, $K$ is closed under lattice operations, we compute $x\lor^{\alga}y$ and $x\land^{\alga}y$, for any pair $\{x,y\}\subseteq K$. We start from the assumption that ($\star$) elements in $K$ are pairwise distinct. Therefore, one must have:
\begin{enumerate}[(a.)]
\item $a\parallel b',(a\land a')\lor(b\land b'),b\land b',b'\lor(a\land a'),a'\land b,a'\land(b\lor b'),a'$ and $a'\parallel b,(a\lor a')\land(b\lor b'),b\lor b',b\land(a\lor a'),a\lor b',a\lor(b\land b')$;
\item $b'\parallel a\land a',(a\land a')\lor(b\land b'),a\lor(b\land b'),a'\land b,b\land(a\lor a'),b$ and $b\parallel a\lor a',(a\lor a')\land(b\lor b'),a'\lor(b\land b'),a\lor b',b'\lor(a\land a')$;
\item $a\land a'\parallel b\land b'$ and $a\lor a'\parallel b\lor b'$;
\item $b'\lor(a\land a')\parallel a'\land b,a\lor(b\land b'),b\land(a\lor a')$ and $b\land(a\lor a')\parallel a\lor b',a'\land(b\lor b')$;
\item $a\lor(b\land b')\parallel a'\land b,a'\land(b\lor b')$ and $a'\land(b\lor b')\parallel a\lor b'$;
\item $a'\land b\parallel a\lor b'$.
\end{enumerate}
Let us prove that ${K}$ forms a subalgebra having the shape depicted in Figure \eqref{fig: kln}:
\begin{equation}\label{fig: kln}
{\tiny\xymatrix @C=1pc @R=.5pc{
&1\ar@{-}[d]&\\
&a'\lor b\ar@{-}[dl]\ar@{-}[dr]&\\
a\lor a'\ar@{-}[dr]\ar@{-}[d]&&b\lor b'\ar@{-}[dl]\ar@{-}[d]\\
a'\ar@{-}[d]&(a\lor a')\land(b\lor b')\ar@{-}[dd]\ar@{-}[dr]
\ar@{-}[dl]&b\ar@{-}[d]\\
a'\land(b\lor b')\ar@{-}[ddd]\ar@{-}[ddr]&&b\land(a\lor a')
\ar@{-}[ddd]\ar@{-}[ddl]\\
&a\lor b'\ar@{-}[ddl]\ar@{-}[ddr]&\\
&a'\land b\ar@{-}[dd]&\\
b'\lor(a\land a')\ar@{-}[d]\ar@{-}[dr]&&a\lor(b\land b')\ar@{-}
[d]\ar@{-}[dl]\\
b'\ar@{-}[d]&(a\land a')\lor(b\land b')\ar@{-}[dl]\ar@{-}
[dr]&a\ar@{-}[d]\\
b\land b'\ar@{-}[dr]&&a\land a'\ar@{-}[dl]\\
&b'\land a\ar@{-}[d]&\\
&0&
}}
\end{equation}
To this aim, we confine ourselves to show all and only non-trivial identities.\\
One has $(a\land a')\land(b\land b')=(a\land b)\land(a'\land b')=a\land b'$, since $'$ is antitone. Moreover, $(a\land a')\lor(b\land b')\lor a=a\lor(b\land b')$ holds by absorption. Similarly, one has $b'\lor(a\land a')\lor(b\land b')=b'\lor(a\land a')$.\\
Also, $a\land a'\leq((a\land a')\lor(b\land b'))\land a\leq a\land (a'\land b)=a\land a'$. Analogously, one proves $b'\land((a\land a')\lor(b\land b'))=b\land b'$. Since De Morgan's laws hold, we have also $a'\land((a\lor a')\land(b\lor b'))=a'\land(b\lor b')$, $b\land((a\lor a')\land(b\lor b'))=b\land(a\lor a')$, $a'\lor((a\lor a')\land(b\lor b'))=a\lor a'$, $b\lor((a\lor a')\land(b\lor b'))=b\lor b'$, $(b\lor b')\lor(a\lor a')=b\lor a'$.\\
Now, $b'\lor a\lor(b\land b')=b'\lor a$ follows by absorption, while $b'\lor(a'\land b)=a'\land(b\lor b')$ and $b'\lor(b\land(a\lor a'))=(a\lor a')\land (b\lor b')$ follow from \eqref{sp} and the fact that $b'\lor(b\land(a\lor a'))=b'\lor a\lor (a'\land b)=(a\lor a')\land (b\lor b')$, by SP2. The proof of $a\lor(b'\lor(a\land a'))=b'\lor a$, $a\lor(a'\land b)=b\land(a\lor a')$ and  $a\lor(a'\land(b\lor b'))=(a\lor a')\land(b\lor b')$ is similar.\\ 
By De Morgan's laws, we obtain $a'\land(a\lor b')=b'\lor(a\land a'), a'\land(a\lor(b\land b'))=(a'\land a')\lor(b\land b'), a'\land b\land(a\lor a')=a'\land b$, and $b\land(a\lor b')=a\lor(b\land b'), b\land(b'\lor(a\land a'))=(b\land b')\lor(a\land a'), b\land a'\land(b\lor b')=a'\land b$.\\
Finally, let us focus on the interval $[(a\land a')\lor(b\land b'),(a\lor a')\land(b\lor b')]$. $(b'\lor(a\land a'))\land(a'\land b)=a'\land(b'\lor a)\land b=(b\land b')\lor(a\land a')$ follows by \eqref{sp} and SP2. Furthermore, $b'\lor(a\land a')\lor(a'\land b)=b'\lor(a'\land b)=a'\land (b\lor b')$. Similarly, we prove $a\lor(b\land b')\lor(a'\land b)=b\land(a\lor a')$ and $(a\lor(b\land b'))\land(a'\land b)=(a\land a')\lor(b\land b').$ Also, $b'\lor(a\land a')\lor(b\land(a\lor a'))=b'\lor(b\land(a\lor a'))=(b'\lor b)\land(a'\lor a)$ follows again by \eqref{sp} and SP2.\\ 
By De Morgan's laws we have $(b'\lor(a\land a'))\land b\land(a\lor a')=(a\land a')\lor(b\land b')$. An analogous proof yields $a\lor(b\land b')\lor(a'\land(b\lor b'))=(a\lor a')\land(b\lor b')$ and $(a\lor(b\land b'))\land(a'\land(b\lor b'))= (a\land a')\lor(b\land b')$. Furthermore, we have $(a\land a')\lor(b\land b')\leq(b'\lor(a\land a'))\land(a\lor(b\land b'))\leq b'\lor(a\land a'))\land b\land(a\lor a')=(a\land a')\lor(b\land b')$ and so, by De Morgan's laws, $(a'\land(b\lor b'))\lor(b\land (a\lor a'))=(a\lor a')\land(b\lor b')$. $b'\lor(a\land a')\lor a\lor(b\land b')=b'\lor a$ and $b\land(a\lor a')\land a'\land(b\lor b')=a'\land b$ follow by absorption.\\ Finally, $(a'\land b)\land(a\lor b')=(a\land a')\lor(b\land b')$ holds by SP2 and so its De Morgan's dual holds as well.\\ Therefore, $K$ is the universe of a sub-algebra $\K$ of $\alga$ which is distributive. By Zorn's lemma, $\K$ can be extended to some $\K_{i}\in\mathfrak{B}(\alga)$ such that $a\leq^{\K_{i}}b$.\\ 
By virtue of the above proof, if $(\star)$ does not hold, then our conclusion follows as well, since it can be seen that $K$ is the universe of a sub-algebra of $\alga$ which is a \emph{homomorphic image} of the pseudo-Kleene lattice depicted in Fig. \eqref{fig: kln} and so again distributive. 
\end{proof} 

It is well known that, given a separable Hilbert space $\mathcal{H}$, $\mathrm{Sh}(\mathbf{E}(\mathcal{H}))$ coincides with the set $\Pi(\mathcal{H})$ of projection operators over $\mathcal{H}$ \cite[Corollary 4.2]{DeGroote}. Therefore, in view of Section \ref{sec:introduction}, $\mathrm{Sh}(\mathbf{E}(\mathcal{H}))$ forms the universe of a complete orthomodular sub-algebra of $\mathbf{E}(\mathcal{H})$. Now, it naturally raises the question if this still holds once we consider an arbitrary sp-orthomodular lattice. Unfortunately, the answer is negative, as witnessed by the next example (see also \cite[p. 1150]{GiuLePa}).
\begin{example}\label{example:psklnotshsubalg}Consider the pseudo-Kleene lattice $\algb$ depicted below.
\begin{equation}
\xymatrix{
&1=0'\ar@{-}[d]\ar@{-}[dr]\ar@{-}[dl]&\\
a\ar@{-}[dr]&b\ar@{-}[d]&c\ar@{-}[dll]\ar@{-}[dl]\ar@{-}[d]\\
b'\ar@{-}[dr]&c'\ar@{-}[d]&a'\ar@{-}[dl]\\
&0=0'&
}\tag{$\algb$} 
\end{equation}
A routine check shows that $\algb$ is sp-orthomodular. However,  $a,b\in\mathrm{Sh}(\algb)$ but $a\land b\notin\mathrm{Sh}(\algb)$.
\end{example}
Of course, the class $\mathcal{C}$ of pseudo-Kleene lattices whose sharp members form a sub-algebra is a sub-quasi-variety of $\mathcal{PKL}$, since it can be axiomatised by the following quasi-equation:
\[x\land x'=0\text{ and }y\land y'=0\text{ imply }(x\land y)\land(x\land y)'=0.\label{ses}\tag{A}\]
However, $\mathcal{C}$ is \emph{proper}, since \eqref{ses} cannot be expressed equationally. For let us consider the following pseudo-Kleene (sp-orthomodular) lattice $\algc$:
\begin{equation}
\xymatrix{
&1=0'\ar@{-}[d]&\\
&d\ar@{-}[dr]\ar@{-}[dl]\ar@{-}[d]&\\
a\ar@{-}[dr]&b\ar@{-}[d]&c\ar@{-}[dll]\ar@{-}[dl]\ar@{-}[d]\\
b'\ar@{-}[dr]&c'\ar@{-}[d]&a'\ar@{-}[dl]\\
&d'\ar@{-}[d]&\\
&0=0'&
}\tag{$\algc$} 
\end{equation}
Clearly, $\algc$ vacuously satisfies \eqref{ses}. However, if one considers the equivalence $\theta\subseteq B^{2}$ such that $[0]_{\theta}=\{0,d'\}$, $[1]_{\theta}=\{1,d\}$, and $[x]_{\theta}=\{x\}$, for any other $x\notin\{0,1,d,d'\}$, it is easily seen that $\theta$ is a congruence. Moreover, the quotient $\algb/\theta$ does not satisfy \eqref{ses} (cf. Example \ref{example:psklnotshsubalg}). Therefore, $\mathcal{C}$ is not closed under quotients. Also, it can be proven that the variety $\mathcal{V}(\mathcal{C})$ generated by $\mathcal{C}$ in $\mathcal{PKL}$ coincides with $\mathcal{PKL}$ itself.
\begin{lemma}$\mathcal{V}(\mathcal{C})=\mathcal{PKL}$.
\end{lemma}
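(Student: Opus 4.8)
The plan is to prove the two inclusions separately. Since $\mathcal{C}\subseteq\mathcal{PKL}$ and $\mathcal{PKL}$ is a variety, we get $\mathcal{V}(\mathcal{C})\subseteq\mathcal{PKL}$ for free; all the content lies in the reverse inclusion $\mathcal{PKL}\subseteq\mathcal{V}(\mathcal{C})$. I would establish it by realizing an arbitrary pseudo-Kleene lattice as a homomorphic image of a member of $\mathcal{C}$, generalizing precisely the passage from $\algc$ to $\algb$ discussed just above.

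First I would fix $\alga\in\mathcal{PKL}$ and, choosing two fresh symbols $p,q\notin A$, form the ordinal sum $\alga^{+}:=p\oplus\alga^{\ell}\oplus q$, adjoining a new bottom $p$ and a new top $q$ to the lattice reduct of $\alga$. I extend the involution to $\alga^{+}$ by setting $p':=q$, $q':=p$, and leaving $x'$ unchanged for $x\in A$. A routine check shows that $'$ is still an antitone involution, and that $\alga^{+}$ satisfies the Kleene condition: for $x,y\in A$ it is inherited from $\alga$, while if either $x$ or $y$ is one of the new bounds the inequality $x\land x'\le y\lor y'$ is trivial, since $p\land p'=q\land q'=p$ is the least element and $p\lor p'=q\lor q'=q$ is the greatest. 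Hence $\alga^{+}\in\mathcal{PKL}$.

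Next I would verify that $\alga^{+}\in\mathcal{C}$. The decisive point is that for every $x\in A$ the meet $x\land x'$ is computed inside $\alga$, so $x\land x'\ge 0^{\alga}>p$; thus no element of $A$ is sharp in $\alga^{+}$, and $\mathrm{Sh}(\alga^{+})=\{p,q\}$, the universe of the two-element Boolean subalgebra. In particular the hypothesis of \eqref{ses} can only be met by $p$ and $q$, for which the conclusion plainly holds, so $\alga^{+}$ satisfies \eqref{ses} vacuously --- exactly as $\algc$ does --- and therefore belongs to $\mathcal{C}$.

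Finally I would display $\alga$ as a quotient of $\alga^{+}$. The map $\varphi\colon\alga^{+}\to\alga$ that fixes $A$ pointwise and sends $p\mapsto 0^{\alga}$, $q\mapsto 1^{\alga}$ is a surjective homomorphism of pseudo-Kleene lattices; it collapses the new bottom $p$ onto $0^{\alga}$ and the new top $q$ onto $1^{\alga}$, and is the direct analogue of the congruence $\theta$ used to recover $\algb$ from $\algc$. Consequently $\alga$ is a homomorphic image of the member $\alga^{+}$ of $\mathcal{C}$, whence $\alga\in\mathcal{V}(\mathcal{C})$; together with the trivial inclusion this yields $\mathcal{V}(\mathcal{C})=\mathcal{PKL}$. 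I expect the only delicate part to be the bookkeeping: checking that $\varphi$ respects $\land,\lor,{}'$ in all the mixed cases involving $p,q,0^{\alga},1^{\alga}$, and keeping track of the strict inequality $0^{\alga}>p$ (not equality), which is exactly what forces every $x\in A$ to be non-sharp and thereby lands $\alga^{+}$ in $\mathcal{C}$.
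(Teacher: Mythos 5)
Your proposal is correct and follows essentially the same route as the paper, which simply invokes the proof strategy of \cite[Theorem 3.2(ii)]{GiuLePa} and leaves the details to the reader: namely, realizing an arbitrary $\alga\in\mathcal{PKL}$ as a quotient of the ordinal sum $p\oplus\alga^{\ell}\oplus q$ (with $p'=q$), whose only sharp elements are the new bounds, exactly as in the passage from $\algc$ to $\algb$ preceding the lemma. Your write-up in fact supplies the verifications the paper omits, and all the steps (antitone involution, Kleene condition, vacuous satisfaction of \eqref{ses}, and the collapsing homomorphism) check out.
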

\begin{proof}To show that $\mathcal{PKL}\subseteq\mathcal{V}(\mathcal{C})$ we prove that any $\alga\in\mathcal{PKL}$ is a quotient of some $\algb\in\mathcal{C}$. This can be accomplished by mimicking the same proof strategy of \cite[Theorem 3.2(ii)]{GiuLePa}. Details are left to the reader.
\end{proof}
A similar proof yields the following 
\begin{lemma}$\mathcal{V}(\mathcal{C}\cap\SPO)=\SPO$.
\end{lemma}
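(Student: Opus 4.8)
The plan is to prove the two inclusions of $\mathcal{V}(\mathcal{C}\cap\SPO)=\SPO$ separately. The inclusion $\mathcal{V}(\mathcal{C}\cap\SPO)\subseteq\SPO$ is immediate: since $\mathcal{C}\cap\SPO\subseteq\SPO$ and $\SPO$ is a variety by Theorem \ref{rem: sp-equational}, applying the variety operator yields $\mathcal{V}(\mathcal{C}\cap\SPO)\subseteq\mathcal{V}(\SPO)=\SPO$. Hence the real content lies in the converse inclusion $\SPO\subseteq\mathcal{V}(\mathcal{C}\cap\SPO)$. Exactly as in the previous lemma, by the $\mathrm{HSP}$ theorem it suffices to exhibit every $\alga\in\SPO$ as a homomorphic image of a member of $\mathcal{C}\cap\SPO$.

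The construction I would use is the order-sum $a\oplus\alga^{\ell}\oplus b$ from the Preliminaries, where $a,b\notin A$ are a fresh bottom and top; I turn it into a UOP by declaring $a':=b$ (so $b'=a$) and leaving $'$ unchanged on $A$. Call the resulting structure $\algb$. First I would verify that $\algb$ is a pseudo-Kleene lattice: $'$ is clearly an antitone involution on $A\cup\{a,b\}$, and the Kleene condition $x\land x'\leq y\lor y'$ holds because whenever one of $x,y$ is a fresh element either the left-hand side collapses to $a$ or the right-hand side to $b$, while for $x,y\in A$ it is inherited from $\alga$.

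The heart of the argument is the pair of claims $\algb\in\mathcal{C}$ and $\algb\in\SPO$. For membership in $\mathcal{C}$ I would note that, for $x\in A$, one has $x\land^{\algb}x'=x\land^{\alga}x'\in A$, which is therefore distinct from the fresh bottom $a$; hence no original element is sharp in $\algb$, so $\mathrm{Sh}(\algb)=\{a,b\}$, which is plainly the universe of a (two-element Boolean) subalgebra. For membership in $\SPO$ I would check the quasi-equation \eqref{sp} of Theorem \ref{eqn: charact}: if $x\leq y$ with $x,y\in A$, then $y\land(x\lor x')$ and $x\lor(x'\land y)$ are computed entirely inside $A$ and coincide by sp-orthomodularity of $\alga$; if instead one of $x,y$ is a fresh element, a short case analysis (according to whether $x\in\{a,b\}$, or $y=b$ with $x\in A$) shows both sides of \eqref{sp} reduce to the same element, so the identity holds trivially. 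I expect this preservation of sp-orthomodularity under the doubling to be the only mildly delicate point.

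Finally I would realise $\alga$ as a quotient of $\algb$. The relation $\theta$ that merges $a$ with $0$ and $b$ with $1$ while fixing every other element is compatible with $',\land,\lor$ — a direct check exploiting that $a,b$ are the extremal elements of $\algb$ — hence it is a congruence, and the quotient $\algb/\theta$ is isomorphic to $\alga$. Consequently $\alga\in H(\mathcal{C}\cap\SPO)\subseteq\mathcal{V}(\mathcal{C}\cap\SPO)$, which establishes the remaining inclusion. Everything apart from the $\SPO$-membership of the cover (the Kleene condition, the sharp-element computation, and the congruence check) is routine verification, entirely analogous to Remark \ref{rem:ordsum} and to the proof of the preceding lemma.
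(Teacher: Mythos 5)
Your proposal is correct and takes essentially the same route the paper does, namely the one it delegates to \cite[Theorem 3.2(ii)]{GiuLePa}: exhibit each $\alga\in\SPO$ as a quotient of a bound-doubled cover lying in $\mathcal{C}\cap\SPO$, where your $a\oplus\alga^{\ell}\oplus b$ with $a'=b$ is, up to relabelling, exactly the construction the paper illustrates with the algebra $\algc$ (new extremal elements congruent to the old bounds, so that only the new bounds are sharp). Your write-up simply supplies the verifications (Kleene condition, preservation of \eqref{sp}, the congruence check) that the paper leaves to the reader.
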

However, it can be shown that sharp elements of a sp-orthomodular lattice $\alga$ always form a sub-orthomodular poset $(\mathrm{Sh}(\alga),\leq,{}',0,1)$ of $\alga$, where $\leq$ is the order inherited from $\alga$.\\
\begin{definition}Let $\alga\in\mathcal{PKL}$ and let $\algb=(A,\leq,{}',0,1)$ be a sub-bounded poset with antitone involution of $\alga$. $\algb$ is said to be a \emph{sub-orthomodular poset} of $\alga$ provided that $\algb\in\mathcal{OMP}$ and, for any $x,y\in B$, if $x\leq y'$, then $x\lor^{\algb}y=x\lor^{\alga}y$.
\end{definition}

\begin{lemma}\label{lem:shortomdpos}Let $\alga\in\mathcal{SPO}$. Then $(\mathrm{Sh}(\alga),\leq,{}',0,1)$ is a sub-orthomodular poset of $\alga$.  
\end{lemma}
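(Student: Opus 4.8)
The plan is to establish, in order, that $\mathbf{S}=(\mathrm{Sh}(\alga),\leq,{}',0,1)$ is a bounded poset with antitone involution, that it is an orthogonal poset whose orthogonal joins are inherited from $\alga$, and that it is paraorthomodular; the last two facts together are exactly the assertion that $\mathbf{S}$ is a sub-orthomodular poset of $\alga$. First I would check closure: from $0\wedge 0'=0$ and $1\wedge 1'=0$ we get $0,1\in\mathrm{Sh}(\alga)$, and from $a\wedge a'=0$ we get $a'\wedge a''=a\wedge a'=0$, so $\mathrm{Sh}(\alga)$ is closed under ${}'$. Since the order and the involution are inherited from $\alga$, this makes $\mathbf{S}$ a bounded poset with antitone involution.

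The technical heart is a closure lemma that I would isolate first: if two sharp elements $a,b$ lie in a common Kleene block $\K$, then every combination of $a,a',b,b'$ under $\wedge^{\alga}$, $\vee^{\alga}$ and ${}'$ is again sharp and is computed as in $\K$ (equivalently, the complemented elements of the distributive lattice $\K$ form a Boolean subalgebra). Indeed, on a block meets agree with those of $\alga$ by definition, and joins agree as well because $\alga$ and $\K$ both obey De Morgan's laws, so $u\vee v=(u'\wedge v')'$; and inside the distributive $\K$, using $a\wedge a'=b\wedge b'=0$, a distributive expansion such as $(a\vee b)\wedge(a\vee b)'=(a\vee b)\wedge a'\wedge b'=(a\wedge a'\wedge b')\vee(b\wedge a'\wedge b')=0$ shows sharpness (the computations for $a\wedge b$, $a'\wedge b$, etc.\ are identical). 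The point at which this becomes available is tameness: by Theorem \ref{thm:SPOpastings}, $\alga\in\mathcal{SPO}$ is a pasting of $\mathfrak{B}(\alga)$, so $a\leq b$ (resp.\ $a\leq b'$) forces $a\leq^{\K}b$ (resp.\ $a\leq^{\K}b'$) for some block $\K$, and since blocks are closed under ${}'$ all four elements $a,a',b,b'$ lie in $\K$.

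Granting the lemma, I would assemble the structure as follows. If $x,y\in\mathrm{Sh}(\alga)$ and $x\leq y'$, then $x,y$ share a block, so $x\vee^{\alga}y$ is sharp and equals the join inside $\mathbf{S}$; this simultaneously verifies the existence of orthogonal joins in $\mathbf{S}$ and the required agreement $x\vee^{\mathbf{S}}y=x\vee^{\alga}y$. The Kleene condition holds trivially since $x\wedge x'=0\leq 1=y\vee y'$, and ${}'$ is a complementation because $a\wedge a'=0$ and $a\vee a'=1$ for sharp $a$; hence $\mathbf{S}$ is an orthogonal poset. For paraorthomodularity, take sharp $x\leq y$: by \eqref{sp} and $x\vee x'=1$ one obtains the orthomodular identity $y=y\wedge(x\vee x')=x\vee(x'\wedge y)$. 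Since $x',y$ lie in a common block, the lemma gives that $x'\wedge y$ is sharp, hence it is the meet of $x'$ and $y$ taken inside $\mathbf{S}$; so if $x'\wedge y=0$ then $y=x\vee 0=x$. Thus $\mathbf{S}$ satisfies the paraorthomodular law, is an orthomodular poset, and therefore a sub-orthomodular poset of $\alga$.

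The main obstacle is precisely the closure lemma: meets of arbitrary sharp elements need not be sharp (Example \ref{example:psklnotshsubalg}), so sharpness of $x'\wedge y$ and of orthogonal joins cannot be argued locally but only after the two elements are forced into a common distributive block. The whole argument therefore hinges on using tameness to convert the hypotheses $x\leq y$ and $x\leq y'$ into block-membership, after which distributivity and sharpness of the generators do the rest.
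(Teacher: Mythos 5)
Your proof is correct, but it takes a noticeably different route from the paper's. The paper's own argument is a two-line equational verification: closure of $\mathrm{Sh}(\alga)$ under ${}'$, $0$, $1$ is immediate, and for $x\leq y'$ with $x,y$ sharp it computes directly $(x\lor y)\land x'\land y'=((x\land x')\lor y)\land y'=(0\lor y)\land y'=0$ (an instance of the sp-identity), then leaves the orthomodular law to the reader. You instead invoke Theorem \ref{thm:SPOpastings} to force any pair of comparable (or orthogonal) sharp elements into a common Kleene block, and then let distributivity of the block do all the work: sharpness of $x\lor y$, of $x'\land y$, and the paraorthomodular implication all fall out of the same closure lemma. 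Your approach uses heavier machinery --- the pasting theorem is the deepest result of the section, whereas the paper's computation only needs \eqref{sp}/SP2 --- but it is conceptually uniform and it makes explicit the step the paper glosses over, namely why $x'\land y$ is sharp and why its vanishing in $\alga$ is the same as its vanishing in $\mathbf{Sh}(\alga)$, which is exactly what the ``straightforward'' orthomodular-law claim needs in light of Example \ref{example:psklnotshsubalg}. There is no circularity, since Theorem \ref{thm:SPOpastings} precedes the lemma. One could shorten your argument by replacing the appeal to tameness with the paper's direct computation, but as written your proof is complete and sound.
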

\begin{proof}
To see this, just note that, $0,1\in\mathrm{Sh}(\alga)$ and, for any $x\in \mathrm{Sh}(\alga)$, $x'\in \mathrm{Sh}(\alga)$. Moreover, if $x,y\in\mathrm{Sh}(\alga)$ are such that $x\leq y'$, then one has that $(x\lor y)\land x'\land y'=((x\land x')\lor y)\land y'=(0\lor y)\land y'=0.$ So, $x\lor y\in\mathcal{S}(\alga)$. Proving that the orthomodular law holds is straightforward, and so it is left to the reader.
\end{proof}
From now on, given $\alga\in\SPO$, we denote by $\mathbf{Sh}(\alga)$ the orthomodular poset $(\mathrm{Sh}(\alga),\leq,',0,1)$.

In what follows we introduce an important concept that will be useful for investigating properties of sp-orthomodular lattice: localizers.

Let $\alga\in\mathcal{PKL}$. For any $x,y\in A$, we set
\[0_{x,y}:=(x\land x')\lor(y\land y'),\ 1_{x,y}:=(x\lor x')\land(y\lor y')\quad\text{and}\quad \pi_{x}(y)=(y\land(x\lor x'))\lor(x\land x').\]
Given a poset $(P,\leq)$, let us denote, for any $x,y\in P$ such that $x\leq y$, \[[x,y]=\{z\in P:x\leq z\leq y\}.\] The following lemma is immediate.
\begin{lemma}Let $\alga\in\mathcal{PKL}$. Then \[\mathbf{Local}^{\alga}(x,y)=([0_{x,y},1_{x,y}],\land,\lor,{}',0_{x,y},1_{x,y}),\] where $\land,\lor,'$ are inherited from $\alga$, is a pseudo-Kleene lattice. Moreover, $\pi_{x}(y)=\pi_{0_{x,y}}(x)$ and $\pi_{x}(y)=\pi_{0_{x,y}}(y)$. 
\end{lemma}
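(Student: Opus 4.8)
The plan is to verify the three defining features of a pseudo-Kleene lattice for the interval $[0_{x,y},1_{x,y}]$, and then to reduce both projection identities to a single computation.

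First I would observe that $\mathbf{Local}^{\alga}(x,y)$ is a sublattice of $\alga^{\ell}$: any interval $[c,d]$ of a lattice is closed under $\land$ and $\lor$, and it is bounded, with least element $0_{x,y}$ and greatest element $1_{x,y}$. The only substantive point is closure under $'$. Here I would use De Morgan's laws to compute $0_{x,y}'=((x\land x')\lor(y\land y'))'=(x\lor x')\land(y\lor y')=1_{x,y}$, whence also $1_{x,y}'=0_{x,y}$. Since $'$ is antitone on $\alga$, applying it to $0_{x,y}\leq z\leq 1_{x,y}$ yields $0_{x,y}\leq z'\leq 1_{x,y}$, so the interval is closed under $'$. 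The involution laws ($z''=z$ and antitonicity) are then inherited verbatim from $\alga$, and so is the Kleene condition $u\land u'\leq v\lor v'$, which holds for all $u,v\in A$ and in particular for $u,v$ in the (involution-closed) interval. This establishes that $\mathbf{Local}^{\alga}(x,y)$ is a pseudo-Kleene lattice.

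For the projection identities, the key preliminary fact is $0_{x,y}\leq 1_{x,y}$. This is where I would invoke the Kleene condition: $x\land x'\leq y\lor y'$ and $y\land y'\leq x\lor x'$, together with the trivial bounds $x\land x'\leq x\lor x'$ and $y\land y'\leq y\lor y'$, give $(x\land x')\lor(y\land y')\leq(x\lor x')\land(y\lor y')$. Consequently $0_{x,y}\lor 0_{x,y}'=0_{x,y}\lor 1_{x,y}=1_{x,y}$ and $0_{x,y}\land 0_{x,y}'=0_{x,y}$, so that the defining expression for $\pi_{0_{x,y}}$ collapses to
\[\pi_{0_{x,y}}(z)=(z\land(0_{x,y}\lor 0_{x,y}'))\lor(0_{x,y}\land 0_{x,y}')=(z\land 1_{x,y})\lor 0_{x,y}\]
for every $z\in A$.

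Finally I would specialise this to $z\in\{x,y\}$. By absorption, $y\land 1_{x,y}=y\land(x\lor x')\land(y\lor y')=y\land(x\lor x')$, so $\pi_{0_{x,y}}(y)=(y\land(x\lor x'))\lor(x\land x')\lor(y\land y')$; the redundant summand $y\land y'$ is then absorbed using the Kleene inequality $y\land y'\leq x\lor x'$ (hence $y\land y'\leq y\land(x\lor x')$), leaving exactly $\pi_x(y)$. The companion identity is the mirror image obtained by interchanging the roles of $x$ and $y$ (note that $0_{x,y}$ and $1_{x,y}$ are themselves symmetric in $x,y$), which yields the corresponding formula at $z=x$. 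The only real subtlety throughout — and the point I would be most careful about — is that both the inequality $0_{x,y}\leq 1_{x,y}$ and the final absorption step genuinely require the Kleene condition; they would fail for an arbitrary bounded lattice with antitone involution, which is precisely why the hypothesis $\alga\in\mathcal{PKL}$ is needed.
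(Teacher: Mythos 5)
Your argument is correct and complete; the paper offers no proof at all (it declares the lemma immediate), and every step you supply --- closure of the interval under $'$ via $0_{x,y}'=1_{x,y}$, the inequality $0_{x,y}\leq 1_{x,y}$ extracted from the Kleene condition, the collapse $\pi_{0_{x,y}}(z)=(z\land 1_{x,y})\lor 0_{x,y}$, and the two absorption steps --- is exactly the computation being suppressed. One point deserves to be made explicit rather than left implicit in the phrase ``the corresponding formula at $z=x$'': your mirror-image step establishes $\pi_{0_{x,y}}(x)=\pi_{y}(x)$, \emph{not} the identity $\pi_{x}(y)=\pi_{0_{x,y}}(x)$ as literally printed in the lemma. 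That is in fact the right thing to prove: the printed identity would force $\pi_{0_{x,y}}(x)=\pi_{0_{x,y}}(y)$, which already fails in $\algb_{8}$ (for the comparable pair $x\leq y$ there one computes $0_{x,y}=z'$, $1_{x,y}=z$, hence $\pi_{0_{x,y}}(x)=x$ while $\pi_{x}(y)=\pi_{0_{x,y}}(y)=y$). The statement the authors evidently intend, and the one consistent with the later use of localizers in the characterization of (SP2), is $\pi_{y}(x)=\pi_{0_{x,y}}(x)$ and $\pi_{x}(y)=\pi_{0_{x,y}}(y)$, which is precisely what your symmetric argument delivers; you should flag the typo rather than silently correcting it.
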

From now on, for any $\alga\in\mathcal{PKL}$, and $x,y\in A$, $\mathbf{Local}^{\alga}(x,y)$ will be called the \emph{localizer} of $x$ and $y$ in $\alga$. Whenever the structure $\alga$ with respect to which the localizer is considered will be clear from the context, we will write simply $\mathbf{Local}(x,y)$ in place of $\mathbf{Local}^{\alga}(x,y)$.
\begin{lemma}\label{lem: gigino}Let $\alga\in\mathcal{PKL}$. Then $\alga$ satisfies (SP2) if and only if, for any $x,y\in A$ such that $x\leq y$, the following hold:
\begin{enumerate}
\item  $\pi_{0_{x,y}}(x),\pi_{0_{x,y}}(y)\in\mathrm{Sh}(\mathbf{Local}(x,y))$;
\item $(\mathrm{Sh}(\mathbf{Local}(x,y)),\leq,{}',0_{x,y},1_{x,y})$ is an orthogonal poset.
\end{enumerate}
\end{lemma}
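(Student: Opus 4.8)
The plan is to reduce (SP2) to a single transparent statement about the localizer and then read off both listed conditions from it. Throughout fix $x\le y$, write $L=\mathbf{Local}(x,y)$ with bottom $0_{x,y}$ and top $1_{x,y}$, and set $t:=x'\land y$, so that $t'=x\lor y'$. First I would check that $t$ and $t'$ both lie in the interval $[0_{x,y},1_{x,y}]$ (using $y\land y'\le x\lor x'$, $x\land x'\le y\lor y'$ and $x\le y$), so they are genuine elements of $L$; since $0_{x,y}\le t\land t'$ holds in every pseudo-Kleene lattice, (SP2) for $(x,y)$ is \emph{equivalent} to $t\land t'=0_{x,y}$, i.e. to the assertion that $x'\land y$ is sharp in $L$. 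Next, using $x\le y$, the Kleene inequalities and the preceding lemma, I would simplify the projections to the closed forms $\pi_{0_{x,y}}(x)=x\lor(y\land y')$ and $\pi_{0_{x,y}}(y)=y\land(x\lor x')$; then $\pi_{0_{x,y}}(x)\le\pi_{0_{x,y}}(y)$, $\pi_{0_{x,y}}(x)'=x'\land(y\lor y')$, $\pi_{0_{x,y}}(y)'=y'\lor(x\land x')$, and a short absorption computation gives the two \emph{recovery identities} $\pi_{0_{x,y}}(x)'\land\pi_{0_{x,y}}(y)=x'\land y=t$ and $\pi_{0_{x,y}}(x)\lor\pi_{0_{x,y}}(y)'=x\lor y'=t'$, which link the projections to $t$.

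For the forward direction, assume (SP2). Since the proof of Lemma \ref{lem: aux1} uses only (SP2), I may invoke it, together with its order-dual applied to $y'\le x'$, to obtain $y\land(y'\lor(x\land x'))=0_{x,y}$ and $x'\land(x\lor(y\land y'))=0_{x,y}$. Factoring $\pi_{0_{x,y}}(x)\land\pi_{0_{x,y}}(x)'=\big((x\lor(y\land y'))\land x'\big)\land(y\lor y')$ and $\pi_{0_{x,y}}(y)\land\pi_{0_{x,y}}(y)'=(x\lor x')\land\big(y\land(y'\lor(x\land x'))\big)$ and substituting these two equalities collapses both to $0_{x,y}$, giving (1). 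For (2), the only nontrivial orthogonal-poset requirement is closure under orthogonal joins: given $a,b\in\mathrm{Sh}(L)$ with $a\le b'$ (hence $b\le a'$), I would apply (SP2) to the comparable pair $(b,a')$; its left side is $(b\land b')\lor(a\land a')=0_{x,y}$ by sharpness, while its right side equals $(a'\land b')\land(a\lor b)=(a\lor b)\land(a\lor b)'$, so $a\lor b$ is sharp and is the join in $\mathrm{Sh}(L)$. The remaining clauses ($0_{x,y},1_{x,y}$ sharp, closure under ${}'$, ${}'$ a complementation on sharps, the Kleene condition) are immediate, so $\mathrm{Sh}(L)$ is an orthogonal poset.

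For the backward direction, assume (1) and (2). By (1) the elements $\pi_{0_{x,y}}(x)$ and $\pi_{0_{x,y}}(y)$, hence $\pi_{0_{x,y}}(y)'$, are sharp; and $\pi_{0_{x,y}}(x)\le\pi_{0_{x,y}}(y)=\big(\pi_{0_{x,y}}(y)'\big)'$ says that the sharp elements $\pi_{0_{x,y}}(x)$ and $\pi_{0_{x,y}}(y)'$ are orthogonal in the orthogonal poset $\mathrm{Sh}(L)$ provided by (2). Their orthogonal join therefore exists in $\mathrm{Sh}(L)$ and, being computed by the inherited lattice operation, equals the recovery value $\pi_{0_{x,y}}(x)\lor\pi_{0_{x,y}}(y)'=x\lor y'=t'$; thus $t'$ is sharp, so is its dual $t=x'\land y$, and by the reduction above this is exactly (SP2).

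The delicate point, and the hinge of the whole equivalence, is the identification in the last step of the join taken \emph{inside} $\mathrm{Sh}(L)$ with the ambient join $x\lor y'$ of $L$ — equivalently, that the sharp elements of the localizer are closed under orthogonal joins. In the forward direction (SP2) supplies precisely this closure through the pair $(b,a')$, whereas in the backward direction it is what the orthogonal-poset hypothesis (2) must deliver (read, as with the paper's notion of a sub-orthogonal poset, with joins inherited from $L$): without it the internal least sharp upper bound could a priori overshoot $x\lor y'$, and one could not conclude that $x'\land y$ is sharp. I expect verifying this coincidence — that no non-sharp element of $L$ sits strictly between the orthogonal pair $\pi_{0_{x,y}}(x),\pi_{0_{x,y}}(y)'$ and their sharp join — to be the only step requiring genuine care; everything else reduces to absorption, De Morgan's laws, and the two displayed consequences of Lemma \ref{lem: aux1}.
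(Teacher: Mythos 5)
Your proof is correct and follows essentially the same route as the paper's: the projections simplify to $x\lor(y\land y')$ and $y\land(x\lor x')$, their sharpness in the localizer is obtained from Lemma \ref{lem: aux1} (whose proof indeed only uses (SP2)), closure of sharp elements under orthogonal joins comes from applying (SP2) to the comparable pair, and the converse recovers (SP2) from the orthogonal join of $\pi_{0_{x,y}}(x)$ and $\pi_{0_{x,y}}(y)'$, exactly as in the paper. The ``delicate point'' you flag --- that the l.u.b. in $\mathrm{Sh}(\mathbf{Local}(x,y))$ must coincide with the join inherited from the localizer --- is handled in the same (implicit) way in the paper's own proof, so your reading is the intended one.
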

\begin{proof}
Concerning the only-if direction, note that if $x\leq y$, then $\pi_{0_{x,y}}(x)\land\pi_{0_{x,y}}(x)'= (x\lor(y\land y'))\land(x'\land(y\lor y'))=((x\land x')\lor((y\land y')\land(y\lor y')))=(x\land x')\lor(y\land y')$. Moreover, $\pi_{0_{x,y}}(y)\land\pi_{0_{x,y}}(y)'=y\land(x\lor x')\land(y'\lor(x\land x'))=(y\land y')\land((x\land x')\land(x\lor x'))=0_{x,y}$, since $x\leq y$ implies $y'\leq x\lor x'$. Now, let $z,u\in\mathrm{Sh}(\mathbf{Local}(x,y))$. If $z\leq u'$, then $(z \lor u)\land(z'\land u')=(z \lor u'')\land(z'\land u')=(z\land z')\lor(u\land u')=(x\land x')\lor(y\land y').$ So $z\lor u\in \mathrm{Sh}(\mathbf{Local}(x,y))$. Conversely, if $x\leq y$, by (1) one has that $x\lor(y\land y'),y'\lor(x\land x')\in\mathrm{Sh}(\mathbf{Local}(x,y))$. By (2), since $x\lor(y\land y')\leq (y'\lor(x\land x'))'$, it follows that $x\lor(y\land y')\lor y'\lor(x\land x')\in\mathrm{Sh}(\mathbf{Local}(x,y)).$ Therefore, one has $x\lor y'\in\mathrm{Sh}(\mathbf{Local}(x,y))$, namely $(x\lor y')\land(x'\land y)=0_{x,y}$.
\end{proof}


\section{On forbidden configurations and the theory of commutativity}\label{sec:forbconfandcomm}
In this section we provide a forbidden configuration theorem for sp-orthomodular lattices with the aim of providing an order-theoretical explanation of the sp-orthomodular law. Furthermore, in the last part of the section we will deal with the theory of commutativity for \emph{modular} pseudo-Kleene lattices to provide (equational) sufficient and necessary conditions under which a pair of elements generate a Kleene-subalgebra. As it will be clear, although they share certain important features, orthomodular lattices and sp-orthomodular lattices yield theories which are different in some of their most crucial aspects. 
\subsection{A forbidden configuration theorem}
There is a long and time-honoured tradition that aims at characterising subvarieties
of varieties of ordered algebras in terms of ``forbidden configurations'', which dates back 
to Dedekind’s celebrated result to the effect that the distributive sub-variety of the variety of lattices is the one whose members do not contain as sub-algebras $\mathbf{M}_{3}$ or $\mathbf{N}_{5}$, while the modular sub-variety is the one whose members do not contain $\mathbf{N}_{5}$. Analogous results  appear in the theory of ortholattices. In fact, in view of Lemma \ref{lem: tameorthom}, it is easily seen that $\mathcal{OML}$ coincides with the class of ortholattices which do not contain a sub-algebra isomorphic to $\algb_{6}$. Now, due to the apparent ``resemblances'' between $\mathcal{SPO}$ and $\mathcal{OML}$, it naturally raises the question if the former class may be provided with a similar characterization.

In what follows we prove a forbidden configuration theorem to characterize sp-orthomodular lattices as pseudo-Kleene lattices in which certain (quotients of) subalgebras cannot occur. Specifically, we will show that sp-orthomodular lattices coincide with the class of pseudo-Kleene lattices whose members cannot contain a subalgebra $\algb$ orthoisomorphic to $\algb_{6}$, or to $\algb_{8}$, or such that, for some $\theta\in\mathrm{Con}\algb$, $\algb/\theta$ is orthoisomorphic either to $\algb_{8}^{*}$, or to $\algb_{10}$ depicted below:

\begin{minipage}{.4\linewidth}
\begin{equation}
\tiny{\xymatrix @C=1pc @R=.5pc{
&1\ar@{-}[dl]\ar@{-}[dd]\\
x'\ar@{-}[dd]&\\
&y'\ar@{-}[dd]\\
z'\ar@{-}[dd]\ar@{-}[dr]&\\
&z\ar@{-}[dd]\\
y\ar@{-}[dd]&\\
&x\ar@{-}[dl]\\
0&
}}\tag{$\algb_{8}^{*}$}
\end{equation}
\end{minipage}
\begin{minipage}{.5\linewidth}
\begin{equation}
\tiny{\xymatrix @C=1pc @R=.5pc{
&&1\ar@{-}[dll]\ar@{-}[d]\ar@{-}[drr]&&\\
x'\ar@{-}[dr]&&x\lor y'\ar@{-}[dr]\ar@{-}[dl]&&y\ar@{-}[dl]\\
&y'\lor(x'\land y)=x'\land(x\lor y')\ar@{-}[dl]\ar@{-}[dr]&&x\lor(x'\land y)=y\land(y'\lor x)\ar@{-}[dl]\ar@{-}[dr]&\\
y'\ar@{-}[drr]&&x'\land y\ar@{-}[d]&&x\ar@{-}[dll]\\
&&0&&
}}\tag{$\algb_{10}$}
\end{equation}
\end{minipage}
\begin{theorem}\label{thm: forbiddenconfigur}Let $\alga\in\mathcal{PKL}$. Then $\alga\in\mathcal{SPO}$ if and only if it does not contain a sub-algebra $\algb$ isomorphic either to $\algb_{6}$, or to $\algb_{8}$, or such that, for some $\theta\in\mathrm{Con}\algb$,    $\algb/\theta\cong\algb_{8}^{*}$ or $\algb/\theta\cong\algb_{10}$.
\end{theorem}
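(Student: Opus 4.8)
The plan is to split the equivalence according to the two defining laws (SP1) and (SP2), exploiting that $\mathcal{SPO}$ is a variety (Theorem \ref{rem: sp-equational}) and that failures of (SP1) are already captured by $\algb_{6}$ and $\algb_{8}$ through Theorem \ref{thm: forb config}. Thus the forbidden subalgebras $\algb_{6},\algb_{8}$ will witness failures of (SP1), while the two forbidden quotients $\algb_{8}^{*},\algb_{10}$ will witness failures of (SP2).

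For the left-to-right direction I would argue purely by closure. Since $\mathcal{SPO}$ is equational it is closed under subalgebras ($S$) and homomorphic images ($H$), so it suffices to check that none of the four displayed algebras is sp-orthomodular; then a subalgebra $\algb\cong\algb_{6},\algb_{8}$, or a quotient $\algb/\theta\cong\algb_{8}^{*},\algb_{10}$ of a subalgebra, of some $\alga\in\mathcal{SPO}$ would itself lie in $\mathcal{SPO}$, a contradiction. That $\algb_{6},\algb_{8}\notin\mathcal{SPO}$ is immediate from Theorem \ref{thm: forb config} (each contains itself, so (SP1) fails). For the remaining two I would exhibit a single comparable pair violating \eqref{sp}: in $\algb_{10}$ the pair $x\le y$ gives $y\land(x\lor x')=y\ne x\lor(x'\land y)$, and in $\algb_{8}^{*}$ the pair $y\le x'$ gives $x'\land(y\lor y')=x'\ne y\lor(y'\land x')$.

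The real work is the right-to-left direction, which I would prove contrapositively: if $\alga\notin\mathcal{SPO}$ then (SP1) or (SP2) fails. If (SP1) fails, Theorem \ref{thm: forb config} hands us a subalgebra isomorphic to $\algb_{6}$ or $\algb_{8}$ and we are done. So suppose (SP1) holds but (SP2) fails, and fix witnesses $a\le b$ with $(a\lor b')\land(a'\land b)>0_{a,b}$. I would pass to the localizer $\mathbf{Local}(a,b)$ and invoke Lemma \ref{lem: gigino}, whose two clauses give a clean dichotomy. If clause (1) fails, some projection is not sharp in the localizer; reducing by the $'$-duality $(a,b)\mapsto(b',a')$ to the case where $\pi_{0_{a,b}}(b)$ is not sharp, I would generate the subalgebra of $\alga$ on $\{a,b\}$ and collapse the boundary intervals $[0,0_{a,b}]$ and $[1_{a,b},1]$ by a congruence $\theta$; modulo $\theta$ the localizer becomes the four-chain $x<z<z'<x'$ of $\algb_{8}^{*}$, flanked by the sharp pair arising from $b',b$, so that $\algb/\theta\cong\algb_{8}^{*}$. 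If clause (1) holds but clause (2) fails, the sharps $\pi_{0_{a,b}}(a)$ and $\pi_{0_{a,b}}(b)'$ are orthogonal yet their join $a\lor b'$ is not sharp; here the sharp part of the localizer forms an incipient benzene obstructed by the non-sharp $a\lor b'$, and the same generate-and-collapse recipe produces $\algb/\theta\cong\algb_{10}$.

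The hard part will be the two element-chases realizing these quotient isomorphisms, which I expect to run exactly along the lines of the proof of Theorem \ref{thm:SPOpastings}: there the subalgebra generated by a comparable pair is forced to be distributive precisely because \eqref{sp} collapses its middle interval, whereas here the failure of (SP2) keeps that interval open and leaves behind $\algb_{8}^{*}$ or $\algb_{10}$. The two delicate points will be (i) pinning down the correct congruence $\theta$, namely the one collapsing $[0,0_{a,b}]$ and $[1_{a,b},1]$ and identifying each non-sharp projection with its adjacent localizer element, and (ii) verifying meet-by-meet and join-by-join that the quotient matches the Hasse diagrams of $\algb_{8}^{*}$ and $\algb_{10}$, using (SP1) and the localizer identities of Lemma \ref{lem: gigino}. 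As in Theorem \ref{thm:SPOpastings}, I would finally have to treat the degenerate cases in which some of the listed elements coincide, observing that the generated algebra is then a homomorphic image of the displayed configuration, which only makes the required quotient easier to obtain.
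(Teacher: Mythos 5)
Your overall frame is the same as the paper's: the left-to-right direction by closure of the variety $\mathcal{SPO}$ under $\mathbb{H}$ and $\mathbb{S}$ together with explicit failures of \eqref{sp} in the four configurations (your witnessing pairs in $\algb_{10}$ and $\algb_{8}^{*}$ check out), and the right-to-left direction split according to whether (SP1) or (SP2) fails, with Theorem \ref{thm: forb config} disposing of the (SP1) case. Up to that point the proposal is correct and coincides with the paper.

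The gap is in the (SP2) case of the converse, which is where essentially all of the content of the theorem lies, and which you explicitly defer (``the hard part will be the two element-chases\dots''). What you offer in its place --- a dichotomy read off from Lemma \ref{lem: gigino} (clause (1) fails $\Rightarrow \algb_{8}^{*}$, clause (2) fails $\Rightarrow \algb_{10}$) together with a uniform ``generate-and-collapse'' recipe whose congruence collapses $[0,0_{a,b}]$ and $[1_{a,b},1]$ and identifies each non-sharp projection with its neighbour --- is a plausible organizing guess but is not substantiated, and it does not match what actually has to be done. The paper's proof proceeds by an extensive case distinction on the comparabilities and coincidences among $x, x', y, y'$ and the derived elements ($x\leq x'$ or not, $x'\land y\leq y'\lor x$ or not, whether $y\land(y'\lor(x\land x'))$ collapses to $0_{x,y}$, etc.), producing eleven distinct configurations $\algf_{1},\dots,\algf_{11}$; the congruences required are \emph{not} uniformly ``collapse the boundary intervals'': e.g.\ for $\algf_{2}$ one must also identify $y'\lor x$ with $y'\lor(x'\land y)$ and $y\land(x\lor y')$ with $y\land x'$, and for $\algf_{11}$ one identifies $x'\land y$ with $a'$ and $y'\lor x$ with $a$, none of which lie in $[0,0_{x,y}]\cup[1_{x,y},1]$. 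Moreover the subalgebra generated by the witnessing pair is not controlled by the argument of Theorem \ref{thm:SPOpastings}, precisely because that argument uses (SP2) throughout to close the nineteen-element set under meets and joins; with (SP2) failing you have no a priori description of $\mathbf{Sg}(a,b)$, so both the shape of the subalgebra and the verification that the proposed $\theta$ is a congruence with the right quotient must be established case by case. As it stands, the converse direction is a programme rather than a proof.
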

\begin{proof}
If $\alga$ contains a subalgebra isomorphic either to $\algb_{6}$ or to $\algb_{8}$, then, by Theorem \ref{thm: forb config}, it does not satisfy (SP1) and so $\alga\notin\SPO$. Now, suppose that $\alga$ contains a subalgebra $\algb$ such that $\algb/\theta\cong\algb_{8}^{*}$ or $\algb/\theta\cong\algb_{10}$. In both cases, it is easily verified that $\algb/\theta$ does not satisfy (SP2) and so, by Lemma \ref{eqn: charact}, it does not satisfy \eqref{sp} as well. Since \eqref{sp} is an equational condition (Remark \ref{rem: sp-equational}), and so it is preserved by subalgebras and quotients, then $\alga$ must not satisfy \eqref{sp} as well. So $\alga\notin\SPO$. Conversely, let us suppose that $\alga$ does not satisfy \eqref{sp}. By Lemma \ref{eqn: charact}, $\alga$ does not satisfy (SP1) or (SP2). If $\alga$ does not satisfy (SP1), then it must contain a subalgebra $\algb$ isomorphic to $\algb_{6}$, or to $\algb_{8}$. Therefore, we can assume without loss of generality that $\alga$ satisfies (SP1) but it does not  satisfy (SP2). We show that $\alga$ must contain a subalgebra $\algb$ having a quotient isomorphic to $\algb_{8}^{*}$, or to $\algb_{10}$. So, let $x,y\in A$ be such that $x\leq y$, but $x'\land y\land(y'\lor x) > (x'\land x)\lor (y'\land y)$. If $y'\leq y$ and $x\leq x'$, then $x'\land y\land(x\lor y')=x\lor y'=(x\land x')\lor(y\land y')$, a contradiction. So, let us assume without loss of generality that $y'\nleq y$ or $x\nleq x'$. We consider $y'\nleq y$, since the latter case can be treated symmetrically. Moreover, if $x\leq y\leq y'\leq x'$ or $y'\leq x'\leq x\leq y$, then $x,y$ generate a distributive subalgebra of $\alga$ and so (SP2) must hold, against our assumption. So, we can assume also $y\nleq y'$ and $x'\nleq x$. Also, if $x\leq y'$, then $x\leq x'$, and we have $x'\land y\land(x\lor y')=y\land y'=x\lor(y\land y')=(x\land x')\lor(y\land y')$, contradicting again our working hypothesis. Therefore, we can assume also $x\nleq y'$. The following cases must be taken into account: (a) $x,y\in\mathrm{Sh}(\alga)$; (b) $y\in\mathrm{Sh}(\alga)$ and $x\notin\mathrm{Sh}(\alga)$ (and, symmetrically, $x\in\mathrm{Sh}(\alga)$ and $y\notin\mathrm{Sh}(\alga)$); (c) $x,y\notin\mathrm{Sh}(\alga)$. In the sequel we focus on case (c) only, since the remaining ones can be treated similarly or with much more ease, so they are left to the reader.

Given the above assumptions, two situations must be considered: (1) $x\leq x'$ and (2) $x\nleq x'$.\\
(1) Note that $(x\land x')\lor(y\land y')=x\lor(y\land y')$ as well as $x'\land y\land(y'\lor x)=y\land(x\lor y')$. Therefore, we have $x\lor(y\land y')< y\land(x\lor y')$. If $y'\geq y\land(x\lor y')$, then $y\land(x\lor y')=y\land y'$, and so $x\leq y'$, against our assumption. Therefore, we have $y\land (x\lor y')\parallel y'$. Now, if $y\land x'\leq y'\lor x$, then one must have $y'\lor x \leq y'\lor(y\land x')\leq y'\lor x$, i.e. $y'\lor x= y'\lor(y\land x')$ and so $y\land(x\lor y')=x'\land y$. Therefore, $\alga$ contains the following subalgebra $\algf_{1}$:
\begin{equation}
\tiny{
\xymatrix @C=1pc @R=.5pc{
&&1\ar@{-}[d]\\
&&y\lor y'\ar@{-}[dll]\ar@{-}[dd]\\
x'\land(y\lor y')\ar@{-}[d]&&\\
y'\lor x= y'\lor(x'\land y)\ar@{-}[drr]\ar@{-}[d]&&y\ar@{-}[d]\\
y'\ar@{-}[dd]&&y\land(x\lor y')=x'\land y\ar@{-}[d]\\
&&x\lor (y\land y')\ar@{-}[dll]\\
y\land y'\ar@{-}[d]&&\\
0&&}
} \tag{$\algf_{1}$}
\end{equation}
Upon taking the equivalence $\theta$ such that $[0]_{\theta}=\{0,y\land y'\}$, $[1]_{\theta}=\{1,y\lor y'\}$ and $[z]_{\theta}=\{z\}$ for any other $z\notin\{0,1,y\land y',y\lor y'\}$, one has that $\algf_{1}/\theta\cong\algb_{8}^{*}$. So let us consider the case $x'\land y\nleq x\lor y'$, i.e. $x'\land y\parallel y'\lor x$. Note that $y\parallel x'\land(y\lor y')$, otherwise either $y'\leq y$ or $y\leq x'$, which is impossible. Therefore, the following subalgebra $\algf_2$ obtains:
\begin{equation}
\tiny{
\xymatrix @C=1pc @R=.5pc{
&&1\ar@{-}[d]\\
&&y\lor y'\ar@{-}[dll]\ar@{-}[d]\\
x'\land(y\lor y')\ar@{-}[d]&&y\ar@{-}[dd]\\
y'\lor(x'\land y)\ar@{-}[drr]\ar@{-}[d]&&\\
y'\lor x\ar@{-}[d]\ar@{-}[drr]&&x'\land y\ar@{-}[d]\\
y'\ar@{-}[dd]&&y\land(x\lor y')\ar@{-}[d]\\
&&x\lor (y\land y')\ar@{-}[dll]\\
y\land y'\ar@{-}[d]&&\\
0&&}
} \tag{$\algf_{2}$}
\end{equation}
Let us consider the equivalence relation $\theta\subseteq F_{2}^{2}$ such that $[y'\lor x]_{\theta}=\{y'\lor(x'\land y), y'\lor x\}$, $[y\land x']_{\theta}=\{y\land(x\lor y'), y\land x'\}$, $[0]_{\theta}=\{0,y\land y'\}$, $[1]_{\theta}=\{1,y\lor y'\}$ and $[z]_{\theta}=\{z\}$, for any other $z\notin\{y'\lor(x'\land y), y'\lor x,y\land(x\lor y'), y\land x',0,1,y\land y',y\lor y'\}$. A close look shows that $\theta$ is indeed a congruence and $\algf_{2}/\theta\cong\algb_{8}^{*}$.

(2) Let us assume that $x\nleq x'$. We consider the following further sub-cases:
\begin{itemize}
\item[(2.1)] $x'\land y\leq y'\lor x$. Note that $x'\land y\leq y'\lor(x'\land y)\leq x'\land(x\lor y')\leq y'\lor x$. Moreover, $x'\land y\ne y'\lor(x'\land y)$ and $x'\land(x\lor y')\ne y'\lor x$. If $x'\land (x\lor y')\ne y'\lor(x'\land y)$, then $\alga$ contains the following subalgebra:
\begin{equation}
\tiny{\xymatrix @C=1pc @R=.8pc {
&1\ar@{-}[d]&\\
&y'\lor x\ar@{-}[dl]\ar@{-}[dr]&\\
x'\land (x\lor y')\ar@{-}[d]&&y\land (x\lor y')\ar@{-}[d]\\
y'\lor(x'\land y)\ar@{-}[dr]&&x\lor(x'\land y)\ar@{-}[dl]\\
&x'\land y\ar@{-}[d]&\\
&0&}}
\end{equation}
In particular, $x'\land (x\lor y')\land y\land(y'\lor x)=x'\land y$ and elements in the subalgebra but $x'\land y$, $y'\lor x$, $0$ and $1$ must be incomparable. Therefore, we would have a subalgebra isomorphic to $\algb_8$, a contradiction. We conclude ($\star$) $x'\land(y'\lor x)=y'\lor(x'\land y)$. Now, if $y'\lor(x\land x')\geq x'\land y$ and $y\land y'\ne(x\land x')\lor(y\land y')$, then $y'\lor(x\land x')=y'\lor(x'\land y)$ and we have the following  subalgebra $\algf_3$ of $\alga$:
\begin{equation}
\tiny{
\xymatrix @C=1pc @R=.5pc{
&&1\ar@{-}[d]\\
&&y\lor y'\ar@{-}[dll]\ar@{-}[d]\\
(x'\lor x)\land(y\lor y')\ar@{-}[d]&&y\ar@{-}[dd]\\
y'\lor x\ar@{-}[drr]\ar@{-}[d]&&\\
x'\land(y'\lor x)=y'\lor (x\land x')=y'\lor(x'\land y)\ar@{-}[d]\ar@{-}[drr]&&x\lor(y\land x')=y\land (x\lor x')=y\land(x\lor y')\ar@{-}[d]\\
y'\ar@{-}[dd]&&x'\land y\ar@{-}[d]\\
&&(x\land x')\lor(y\land y')\ar@{-}[dll]\\
y\land y'\ar@{-}[d]&&\\
0&&}
} \tag{$\algf_{3}$}
\end{equation}
\noindent Reasoning as in case (1), one obtains a congruence $\theta$ over $\algf_3$ such that $\algf_{3}/\theta\cong\algb_{8}^{*}$. Arguing symmetrically, if $x\lor (y\land y')\geq x'\land y$ and $x\land x'\ne (x\land x')\lor(y\land y')$, one has that $x\lor(y\land y')=x\lor(x'\land y)$ and we obtain a subalgebra $\algf_4$ of $\alga$ having the following shape
\begin{equation}
\tiny{
\xymatrix @C=1pc @R=.5pc{
&&1\ar@{-}[d]\\
&&x\lor x'\ar@{-}[dll]\ar@{-}[d]\\
(x'\lor x)\land(y\lor y')\ar@{-}[d]&&x'\ar@{-}[dd]\\
y'\lor x\ar@{-}[drr]\ar@{-}[d]&&\\
x\lor(y\land y')=x\lor (x'\land y)\ar@{-}[d]\ar@{-}[drr]&&x'\land(y\lor y')=x'\land (x\lor y')\ar@{-}[d]\\
x\ar@{-}[dd]&&x'\land y\ar@{-}[d]\\
&&(x\land x')\lor(y\land y')\ar@{-}[dll]\\
x\land x'\ar@{-}[d]&&\\
0&&}
} \tag{$\algf_{4}$}
\end{equation}
Reasoning as above, it is not difficult to find a congruence $\theta$ over $\algf_{4}$ such that $\algf_{4}/\theta\cong\algb_{8}^{*}$. Therefore, we can safely assume that (a) $(x'\land x)\lor(y'\land y)= y\land y'$ or $y'\lor(x\land x')\ngeq x'\land y$; and (b) $(x'\land x)\lor(y'\land y)= x\land x'$ or $x\lor(y\land y')\ngeq x'\land y$.
 We consider the following subcases:\\
(2.1.1) $(x'\land x)\lor(y'\land y)= y\land y'$ or $(x'\land x)\lor(y'\land y)= x\land x'$. We analyze the case $(x'\land x)\lor(y'\land y)= y\land y'$, since the latter can be treated symmetrically. We have $y\land y'\geq x\land x'=y'\land x$. Therefore $y'\lor (x\land x') = y'$. If $x\lor (y\land y')\geq x'\land y$, then, by hypothesis (b), $x\land x'=y\land y'$, and then, upon taking into account ($\star$), $\alga$ contains the following subalgebra $\algf_{5}$
\begin{equation}
\tiny{\xymatrix @C=1pc @R=.5pc{
&&1\ar@{-}[d]&&\\
&&x\lor x'=y\lor y'\ar@{-}[dll]\ar@{-}[d]\ar@{-}[drr]&&\\
x'\ar@{-}[dr]&&x\lor y'\ar@{-}[dr]\ar@{-}[dl]&&y\ar@{-}[dl]\\
&y'\lor(x'\land y)\ar@{-}[dl]\ar@{-}[dr]&&x\lor(x'\land y)\ar@{-}[dl]\ar@{-}[dr]&\\
y'\ar@{-}[drr]&&x'\land y\ar@{-}[d]&&x\ar@{-}[dll]\\
&&x\land x'=y\land y'\ar@{-}[d]&&\\
&&0&&
}}\tag{$\algf_{5}$}
\end{equation}
Taking the equivalence $\theta\subseteq F_{5}^{2}$ such that $[0]_{\theta}=\{0,y\land y'\}$, $[1]_{\theta}=\{1,y\lor y'\}$, $[z]_{\theta}=\{z\}$, for any other $z\notin\{1,0,y\land y',y\lor y'\}$, it can be verified that $\theta$ is a congruence and $\algf_{5}/\theta$ is indeed isomorphic to $\algb_{10}$. If  $x\lor(y\land y')\ngeq x'\land y$, then we consider the following subcases:\\
(2.1.1.1) $x\land x'=y\land y'$. But then, $x=x\lor(y\land y')$ and so, reasoning as above, it follows that $\alga$ contains a subalgebra having a quotient isomorphic to $\algb_{10}$.

\noindent(2.1.1.2) $x\land x'\ne y\land y'$. If $x'\land(x\lor(y\land y'))>y\land y'$, we have the following subalgebra $\algf_{6}$: 
\begin{equation}
\tiny{
\xymatrix @C=1pc @R=.5pc{
&&1\ar@{-}[d]\\
&&x\lor x'\ar@{-}[dll]\ar@{-}[d]\\
y\lor y'\ar@{-}[d]&&x'\ar@{-}[dd]\\
x\lor(x'\land(y\lor y'))\ar@{-}[drr]\ar@{-}[d]&&\\
x\lor (y\land y')\ar@{-}[d]\ar@{-}[drr]&&x'\land(y\lor y')\ar@{-}[d]\\
x\ar@{-}[dd]&&x'\land(x\lor(y\land y'))\ar@{-}[d]\\
&&(y\land y')\ar@{-}[dll]\\
x\land x'\ar@{-}[d]&&\\
0&&}
} \tag{$\algf_{6}$}
\end{equation}
In particular, note that $x\lor(x'\land(y\lor y'))\ne x\lor (y\land y')$ since $x\lor(y\land y')\ngeq x'\land y$. Therefore, upon taking the congruence $\rho=\theta(x\lor(x'\land(y\lor y')),x\lor (y\land y'))\lor\theta(x\land x',0)$ over $\algf_6$, it is easily verified that $\algf_{6}/\rho\cong\algb_{8}^{*}$. If $x'\land(x\lor(y\land y'))=y\land y'$, then we have the following subalgebra $\algf_{7}$:
\begin{equation}
\tiny{\xymatrix @C=1pc @R=.5pc{
&&1\ar@{-}[d]&&\\
&&x\lor x'=y\lor y'\ar@{-}[dll]\ar@{-}[d]\ar@{-}[drr]&&\\
x'\land(y\lor y')\ar@{-}[dr]&&x\lor y'\ar@{-}[dr]\ar@{-}[dl]&&y\ar@{-}[dl]\\
&y'\lor(x'\land y)\ar@{-}[dl]\ar@{-}[dr]&&x\lor(x'\land y)\ar@{-}[dl]\ar@{-}[dr]&\\
y'\ar@{-}[drr]&&x'\land y\ar@{-}[d]&&x\lor(y\land y')\ar@{-}[dll]\\
&&y\land y'\ar@{-}[d]&&\\
&&0&&
}}\tag{$\algf_{7}$}
\end{equation}
Using a customary reasoning, one obtains a quotient of $\algf_{7}$ isomorphic to $\algb_{10}$.
(2.1.2) $(x'\land x)\lor(y'\land y)\ne y\land y'$ and $(x'\land x)\lor(y'\land y)\ne x\land x'$. Therefore, in view of our assumptions, $y'\lor(x\land x')\ngeq x'\land y$ and $x\lor(y\land y')\ngeq x'\land y$. First, if $y\land (y'\lor(x\land x'))\ne (x\land x')\lor(y\land y')$ or $x'\land (x\lor(y\land y'))\ne (x\land x')\lor(y\land y')$, then in the former case one has the following subalgebra $\algf_{8}$: 
\begin{equation}
\tiny{
\xymatrix @C=1pc @R=.5pc{
&&1\ar@{-}[d]\\
&&y\lor y'\ar@{-}[dll]\ar@{-}[d]\\
(y\lor y')\land(x\lor x')\ar@{-}[d]&&y\ar@{-}[dd]\\
y'\lor(y\land(x\lor x'))\ar@{-}[drr]\ar@{-}[d]&&\\
y'\lor (x\land x')\ar@{-}[d]\ar@{-}[drr]&&y\land(x\lor x')\ar@{-}[d]\\
y'\ar@{-}[dd]&&y\land(y'\lor(x\land x'))\ar@{-}[d]\\
&&(y\land y')\lor(x\land x')\ar@{-}[dll]\\
y\land y'\ar@{-}[d]&&\\
0&&}
} \tag{$\algf_{8}$}
\end{equation}
In the latter case, the following subalgebra $\algf_{8}^{\ast}$ obtains:
\begin{equation}
\tiny{
\xymatrix @C=1pc @R=.5pc{
&&1\ar@{-}[d]\\
&&x\lor x'\ar@{-}[dll]\ar@{-}[d]\\
(y\lor y')\land(x\lor x')\ar@{-}[d]&&x'\ar@{-}[dd]\\
x\lor(x'\land(y\lor y'))\ar@{-}[drr]\ar@{-}[d]&&\\
x\lor (y\land y')\ar@{-}[d]\ar@{-}[drr]&&x'\land(y\lor y')\ar@{-}[d]\\
x\ar@{-}[dd]&&x'\land(x\lor(y\land y'))\ar@{-}[d]\\
&&(y\land y')\lor(x\land x')\ar@{-}[dll]\\
x\land x'\ar@{-}[d]&&\\
0&&}
} \tag{$\algf_{8}^{\ast}$}
\end{equation}
Reasoning as in previous cases, in both situations we have a subalgebra with a quotient isomorphic to $\algb_{8}^{*}$.
Therefore, we assume also $y\land (y'\lor(x\land x'))= (x\land x')\lor(y\land y')$ and $x'\land (x\lor(y\land y'))= (x\land x')\lor(y\land y')$. In this case, the following subalgebra $\algf_{9}$, which has indeed a quotient isomorphic to $\algb_{10}$, obtains:
\begin{equation}
\tiny{\xymatrix @C=1pc @R=.5pc{
&&1\ar@{-}[d]&&\\
&&x\lor x'=y\lor y'\ar@{-}[dll]\ar@{-}[d]\ar@{-}[drr]&&\\
x'\land(y\lor y')\ar@{-}[dr]&&x\lor y'\ar@{-}[dr]\ar@{-}[dl]&&y\land(x\lor x')\ar@{-}[dl]\\
&y'\lor(x'\land y)\ar@{-}[dl]\ar@{-}[dr]&&x\lor(x'\land y)\ar@{-}[dl]\ar@{-}[dr]&\\
y'\lor(x\land x')\ar@{-}[drr]&&x'\land y\ar@{-}[d]&&x\lor(y\land y')\ar@{-}[dll]\\
&&(y\land y')\lor(x\land x')\ar@{-}[d]&&\\
&&0&&
}}\tag{$\algf_{9}$}
\end{equation}
\item[(2.2)] $x'\land y\nleq y'\lor x$. Let us set $a':=(x'\land y)\land(y'\lor x)$. Note that $a'\leq y'\lor(x'\land y)\leq x'\land a\leq a$. Of course, $y'\lor(x'\land y)\ne a'$. Moreover,  $x'\land(y'\lor x)\ne a$, since otherwise $x'\land y\leq x\lor y'$. In view of our assumptions, if $a\land x'\ne y'\lor(x'\land y)$, then we would have the following subalgebra isomorphic to $\algb_{8}$:
\begin{equation}
\tiny{\xymatrix @C=1pc @R=.8pc {
&1\ar@{-}[d]&\\
&a\ar@{-}[dl]\ar@{-}[dr]&\\
a\land x'\ar@{-}[d]&&y\land (x\lor y')\ar@{-}[d]\\
y'\lor(x'\land y)\ar@{-}[dr]&&x\lor a'\ar@{-}[dl]\\
&a'\ar@{-}[d]&\\
&0&}}
\end{equation}
which is impossible by hypothesis. Therefore, we conclude $a\land x'=y'\lor(x'\land y)$. Moreover, $a'\leq y'\lor a'\leq x'\land(y'\lor x)\leq a$ and, upon reasoning as above, one must have that $y'\lor a'=x'\land(y'\lor x)$. Now, let us distinguish the following subcases:\\
(2.2.1) $y'\lor(x\land x')=y'\lor a'$ or $x'\lor(y\land y')=x\lor a'$. We consider the former case, since the latter can be treated analogously. Note that, in particular, one has $y\land y'\ne(x\land x')\lor(y\land y')$, otherwise $y'\lor a'=y'$, which is impossible. In this situation, we have the following subalgebra $\algf_{10}$ of $\alga$:
\begin{equation}
\tiny{
\xymatrix @C=1pc @R=.5pc{
&&1\ar@{-}[d]\\
&&y\lor y'\ar@{-}[dll]\ar@{-}[d]\\
(y\lor y')\land(x\lor x')\ar@{-}[d]&&y\ar@{-}[dd]\\
a\ar@{-}[drr]\ar@{-}[d]&&\\
y'\lor a'\ar@{-}[d]\ar@{-}[drr]&&y\land a\ar@{-}[d]\\
y'\ar@{-}[dd]&&a'\ar@{-}[d]\\
&&(y\land y')\lor(x\land x')\ar@{-}[dll]\\
y\land y'\ar@{-}[d]&&\\
0&&}
} \tag{$\algf_{10}$}
\end{equation}
Applying a customary reasoning, we obtain a congruence $\theta$ over $\algf_{10}$ such that $\algf/\theta\cong\algb_{8}^{*}$.\\
(2.2.2) $y'\lor(x\land x')\ne y'\lor a'$ and $x'\lor(y\land y')\ne x\lor a'$. We consider the following subcases:\\
(2.2.2.1) $y\land(y'\lor(x\land x'))\ne (x\land x')\lor(y\land y')$ or $x'\land(x\lor(y\land y'))\ne (x\land x')\lor(y\land y')$. Again, we consider the former situation, since the latter can be treated in the same way. Indeed, it can be seen that $\alga$ contains $\algf_{8}$ and so, reasoning as in case (2.1.2) one has that $\alga$ contains a subalgebra having a quotient isomorphic to $\algb_{8}^{*}$.\\
(2.2.2.2) $y\land(y'\lor(x\land x'))= (x\land x')\lor(y\land y')$ and $x'\land(x\lor(y\land y'))= (x\land x')\lor(y\land y')$. The following subalgebra $\algf_{11}$ of $\alga$ obtains:
\begin{equation}
\tiny{\xymatrix @C=1pc @R=.5pc{
&1\ar@{-}[d]&\\
&(x\lor x')\land (y\lor y')\ar@{-}[dl]\ar@{-}[d]\ar@{-}[dr]&\\
x'\land(y\lor y')\ar@{-}[d]&a\ar@{-}[dl]\ar@{-}[dr]\ar@{-}[dd]&y\land(x\lor x')\ar@{-}[d]\\
x'\land a\ar@{-}[ddd]\ar@{-}[ddr]&&y\land a\ar@{-}[ddl]\ar@{-}[ddd]\\
&y'\lor x\ar@{-}[ddl]\ar@{-}[ddr]&\\
&x'\land y\ar@{-}[dd]&\\
y'\lor a'\ar@{-}[dr]\ar@{-}[d]&&x\lor a'\ar@{-}[dl]\ar@{-}[d]\\
y'\lor(x\land x')\ar@{-}[dr]&a'\ar@{-}[d]&x\lor(y\land y')\ar@{-}[dl]\\
&(x\land x')\lor(y\land y')\ar@{-}[d]&\\
&0&
}} \tag{$\algf_{11}$}
\end{equation}
Consider the following equivalence $\theta\subseteq F_{11}^{2}$ such that $[x'\land y]_{\theta}=\{x'\land y,a'\}$, $[y'\lor x]_{\theta}=\{y'\lor x,a\}$, $[x'\land a]_{\theta}=\{x'\land a,y'\lor a'\}$, $[y\land a]_{\theta}=\{x\lor a',y\land a\}$, $[(x\land x')\lor(y\land y')]_{\theta}=\{(x\land x')\lor(y\land y'),0\}$, $[(x\lor x')\land(y\lor y')]_{\theta}=\{(x\lor x')\land(y\lor y'),1\}$, and $[z]_{\theta}=\{z\}$, for any other $z\in F_{11}$ not mentioned above. It turns out that $\theta$ is a congruence such that $\algf_{11}/\theta$ has the following shape:
\begin{equation}
\tiny{\xymatrix @C=1pc @R=.5pc{
&&[1]_{\theta}\ar@{-}[dll]\ar@{-}[d]\ar@{-}[drr]&&\\
[x'\land(y\lor y')]_{\theta}\ar@{-}[dr]&&[x\lor y']_{\theta}\ar@{-}[dr]\ar@{-}[dl]&&[y\land(x\lor x')]_{\theta}\ar@{-}[dl]\\
&[x'\land a]_{\theta}\ar@{-}[dl]\ar@{-}[dr]&&[x\lor a']_{\theta}\ar@{-}[dl]\ar@{-}[dr]&\\
[y'\lor(x\land x')]_{\theta}\ar@{-}[drr]&&[x'\land y]_{\theta}\ar@{-}[d]&&[x\lor(y\land y')]_{\theta}\ar@{-}[dll]\\
&&[0]_{\theta}&&
}}\tag{$\algf_{11}/\theta$}
\end{equation}
So, we have $\algf_{11}/\theta\cong\algb_{10}$. This concludes the proof of the theorem.\qedhere
\end{itemize}
\end{proof}
As an immediate consequence of Theorem \ref{thm: forbiddenconfigur}, we have the following corollary.
\begin{corollary}Let $\alga\in\mathcal{PKL}$. Then $\alga\in\SPO$ if and only if it does not contain a sub-algebra $\algb$ such that, for some $\theta\in\mathrm{Con}\algb$, $\algb/\theta$ is isomorphic either to $\algb_{6}$, or to $\algb_{8}^{*}$, or to $\algb_{10}$.
\end{corollary}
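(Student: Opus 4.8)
The plan is to deduce the corollary from Theorem~\ref{thm: forbiddenconfigur} by observing that its list of forbidden configurations and the one in the corollary cut out exactly the same class, namely the complement of $\mathcal{SPO}$ inside $\mathcal{PKL}$. Writing (T) for the condition ``$\alga$ contains a subalgebra isomorphic to $\algb_6$ or to $\algb_8$, or a subalgebra $\algb$ with $\algb/\theta\cong\algb_8^*$ or $\algb/\theta\cong\algb_{10}$ for some $\theta$'' and (C) for the condition in the present statement, it suffices to prove (T)$\iff$(C); the corollary then follows since Theorem~\ref{thm: forbiddenconfigur} identifies (T) with $\alga\notin\mathcal{SPO}$. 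I would in fact establish the two halves separately, as (C)$\Rightarrow\alga\notin\mathcal{SPO}$ and $\alga\notin\mathcal{SPO}\Rightarrow$(C).

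For the first half I would argue purely formally. Membership in $\mathcal{SPO}$ is equationally definable (Theorem~\ref{rem: sp-equational}), so $\mathcal{SPO}$ is a variety and hence closed under the formation of subalgebras and of homomorphic images. Consequently, if $\alga\in\mathcal{SPO}$ then every quotient $\algb/\theta$ of every subalgebra $\algb$ of $\alga$ again lies in $\mathcal{SPO}$. It therefore suffices to record that none of $\algb_6$, $\algb_8^*$, $\algb_{10}$ belongs to $\mathcal{SPO}$: the benzene ring $\algb_6$ fails (SP1) by Theorem~\ref{thm: forb config}, while $\algb_8^*$ and $\algb_{10}$ fail (SP2), as already noted in the proof of Theorem~\ref{thm: forbiddenconfigur}. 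Hence no member of $\mathcal{SPO}$ can exhibit a subalgebra having one of these three lattices as a quotient, which is exactly (C)$\Rightarrow\alga\notin\mathcal{SPO}$.

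For the converse I would invoke Theorem~\ref{thm: forbiddenconfigur}: if $\alga\notin\mathcal{SPO}$ then (T) holds. In three of the four cases (C) is immediate, since a subalgebra with a quotient $\cong\algb_8^*$ or $\cong\algb_{10}$ already witnesses (C), and a subalgebra isomorphic to $\algb_6$ does so via the trivial (identity) congruence. The only point requiring genuine work is the case of a subalgebra isomorphic to $\algb_8$. Here I would exhibit an explicit congruence realising $\algb_6$ as a quotient of $\algb_8$: labelling the elements of $\algb_8$ as $0<z'<\{y',x\}<\{x',y\}<z<1$ with ${}'$ swapping primed and unprimed generators, the interval $[z',z]$ is itself a copy of the benzene ring, and $\algb_8$ is obtained from it by adjoining a new bottom $0$ below $z'$ and a new top $1$ above $z$. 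Collapsing $\{0,z'\}$ and $\{z,1\}$ while leaving $x,x',y,y'$ as singletons then yields a quotient $\cong\algb_6$, giving (C).

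The main, and indeed only, obstacle is verifying that this last partition is a congruence of $\algb_8$, i.e. that it is compatible with $\land$, $\lor$ and ${}'$ and that the congruence it generates forces no further identifications. This is a short finite check: ${}'$ interchanges the two two-element blocks $\{0,z'\}$ and $\{z,1\}$ and fixes the four singletons, while for every $w$ the elements $w\land z'$ and $w\land 0$ lie in the same block, as do $w\lor z'$ and $w\lor 0$ (and dually for $z$ and $1$), so no element outside $\{0,z',z,1\}$ is dragged into a nontrivial class. Once this is in place, (T)$\Rightarrow$(C) is complete, and combining the two halves yields the corollary.
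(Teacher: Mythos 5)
Your proof is correct and follows exactly the argument the paper leaves implicit when it calls the corollary an ``immediate consequence'' of Theorem \ref{thm: forbiddenconfigur}: the only genuine content is that $\algb_{8}$ has $\algb_{6}$ as a quotient (collapsing $\{0,z'\}$ and $\{z,1\}$), combined with the fact that $\mathcal{SPO}$, being a variety, is closed under subalgebras and homomorphic images while $\algb_{6},\algb_{8}^{*},\algb_{10}\notin\mathcal{SPO}$. Your finite verification that the indicated partition is a congruence is accurate, so nothing is missing.
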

Moreover, if $\mathcal{V}$ is a sub-variety of $\mathcal{PKL}$, then it is easily seen that $\mathcal{V}$ contains a member isomorphic to $\algb_{6}$ if and only if it contains a member isomorphic either to $\algb_{6}$ or to $\algb_{8}$. So, above results can be summarized as follows.
\begin{corollary}Let $\mathcal{V}$ be a sub-variety of $\mathcal{PKL}$. Then $\mathcal{V}\subseteq\mathcal{SPO}$ if and only if it does not contain a member isomorphic either to $\algb_{6}$, or to $\mathbf{B}_{8}^{*}$, or to $\algb_{10}$.
\end{corollary}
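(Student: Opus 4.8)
The plan is to leverage the fact that both $\mathcal{SPO}$ and $\mathcal{V}$ are \emph{varieties}, hence closed under homomorphic images ($\mathbf{H}$) and subalgebras ($\mathbf{S}$), so that the per-member ``contains a subalgebra whose quotient is $\ldots$'' conditions of the preceding results collapse into plain membership conditions. The tool I would cite is the quotient-form characterisation established just above (the Corollary asserting that, for $\alga\in\mathcal{PKL}$, one has $\alga\in\mathcal{SPO}$ iff $\alga$ possesses no subalgebra $\algb$ with $\algb/\theta$ isomorphic to $\algb_6$, $\algb_8^*$ or $\algb_{10}$ for some $\theta\in\Con\algb$), which in turn rests on Theorem \ref{thm: forbiddenconfigur}.

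For the left-to-right direction I would first note that each of $\algb_6$, $\algb_8^*$ and $\algb_{10}$ fails to be sp-orthomodular: taking $\algb$ to be the algebra itself and $\theta$ the identity congruence, the algebra is a subalgebra of itself whose (trivial) quotient is isomorphic to one of the three forbidden configurations, so by the quotient-form Corollary none of the three lies in $\mathcal{SPO}$. Since $\mathcal{V}\subseteq\mathcal{SPO}$ by hypothesis, none of these algebras can be a member of $\mathcal{V}$, which is precisely the stated conclusion.

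For the converse I would assume $\mathcal{V}$ contains no member isomorphic to $\algb_6$, $\algb_8^*$ or $\algb_{10}$, take an arbitrary $\alga\in\mathcal{V}$, and argue by contradiction. If $\alga\notin\mathcal{SPO}$, the quotient-form Corollary supplies a subalgebra $\algb$ of $\alga$ and a congruence $\theta\in\Con\algb$ with $\algb/\theta$ isomorphic to one of $\algb_6$, $\algb_8^*$, $\algb_{10}$. Closure of the variety $\mathcal{V}$ under $\mathbf{S}$ then gives $\algb\in\mathcal{V}$, and closure under $\mathbf{H}$ gives $\algb/\theta\in\mathcal{V}$; hence $\mathcal{V}$ would contain a member isomorphic to a forbidden configuration, contradicting the hypothesis. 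Thus $\alga\in\mathcal{SPO}$, and as $\alga$ was arbitrary, $\mathcal{V}\subseteq\mathcal{SPO}$.

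The argument is essentially bookkeeping once the right characterisation is in hand, so I do not anticipate a genuine obstacle; the only point needing care is the already-noted equivalence (the Remark preceding the statement) that, for a subvariety, containing $\algb_6$ is the same as containing $\algb_6$ or $\algb_8$. This is exactly what permits dropping $\algb_8$ from the list, and it is the reason the quotient-form Corollary (which has absorbed $\algb_8$ via $\algb_8/\theta\cong\algb_6$) is the convenient statement to invoke rather than Theorem \ref{thm: forbiddenconfigur} in its original form; all of the genuine combinatorial work has already been discharged in the forbidden configuration theorem.
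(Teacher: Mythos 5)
Your proposal is correct and matches the paper's intent: the paper offers no explicit proof, merely observing that the result "summarizes" the preceding quotient-form corollary together with the remark about $\algb_{6}$ versus $\algb_{8}$, and the closure of a variety under $\mathbf{H}$ and $\mathbf{S}$ is exactly the bookkeeping you carry out. Nothing is missing.
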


In \cite{GiuMurPa}, the problem of providing an equational basis for the class $\mathcal{D}$ of pseudo-Kleene lattices which do not have $\algb_{8}$ as a sub-algebra was addressed. In that venue (and by Theorem \ref{thm: forb config}), it was shown that $\mathcal{D}$ forms indeed a quasi-variety, since it can be axiomatised by a finite set of quasi-equations. However, \cite{GiuMurPa} proves also that $\mathcal{D}$ is proper, since it is not closed under quotients. Now, it is reasonable to ask for something weaker, namely if a \emph{largest} variety of pseudo-Kleene lattices which do not contain $\algb_{6}$ or $\algb_{8}$ as sub-algebras can be provided.
Due to its deep resemblances with $\mathcal{OML}$, $\SPO$ seems to be a good candidate for reaching the goal. However, this is not the case, as shown by the next lemma. 
\begin{lemma}$\mathcal{SPO}$ is \emph{not} the largest subvariety of $\mathcal{PKL}$ which does not contain a member isomorphic to $\algb_{6}$ or to $\algb_{8}$. 
\end{lemma}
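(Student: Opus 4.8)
The plan is to refute maximality by producing a single subvariety $\mathcal{W}$ of $\mathcal{PKL}$ which contains no member isomorphic to $\algb_{6}$ or to $\algb_{8}$ but is \emph{not} contained in $\mathcal{SPO}$; then $\mathcal{SPO}$ cannot be the largest such subvariety. The candidate is $\mathcal{W}=\mathcal{V}(\algb_{8}^{*})$, the subvariety generated by the forbidden quotient $\algb_{8}^{*}$. Since $\algb_{8}^{*}\in\mathcal{PKL}$ but (being, trivially, a subalgebra of itself with a quotient isomorphic to $\algb_{8}^{*}$) it is not sp-orthomodular by Theorem~\ref{thm: forbiddenconfigur}, and $\mathcal{SPO}$ is a variety, we already get $\mathcal{W}\not\subseteq\mathcal{SPO}$. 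Moreover, it suffices to prove $\algb_{6}\notin\mathcal{W}$: indeed $\algb_{6}$ is a homomorphic image of $\algb_{8}$, obtained by collapsing the two outer covering pairs $\{0,z'\}$ and $\{z,1\}$ into a congruence $\theta$ with $\algb_{8}/\theta\cong\algb_{6}$, so if $\algb_{8}$ lay in $\mathcal{W}$ then, $\mathcal{W}$ being closed under quotients, $\algb_{6}$ would too.

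To show $\algb_{6}\notin\mathcal{W}$ I would invoke J\'onsson's Lemma. The variety $\mathcal{PKL}$ is congruence-distributive, its members carrying a lattice reduct; since the generator $\algb_{8}^{*}$ is finite (so its ultrapowers collapse to itself), J\'onsson's Lemma yields that every subdirectly irreducible member of $\mathcal{W}$ is, up to isomorphism, a homomorphic image of a subalgebra of $\algb_{8}^{*}$, that is, a member of the finite, inspectable class $\mathrm{HS}(\algb_{8}^{*})$. A short computation of $\mathrm{Con}(\algb_{6})$ shows that $\algb_{6}$ is subdirectly irreducible: its only proper nontrivial congruence collapses $\{x,y\}$ and $\{x',y'\}$ (any attempt to collapse an outer cover propagates to $\nabla$), so this is the monolith. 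Hence it is enough to verify $\algb_{6}\notin\mathrm{HS}(\algb_{8}^{*})$.

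The separating invariant is the set of sharp elements. One computes $\mathrm{Sh}(\algb_{8}^{*})=\{0,y,y',1\}\cong\algb_{2}\times\algb_{2}$, the non-sharp elements being exactly $x<z<z'<x'$ with $x\land x'=x$ and $z\land z'=z$; as meets and bounds are inherited, every subalgebra of $\algb_{8}^{*}$ has at most four sharp elements, so the all-sharp six-element $\algb_{6}$ cannot be a subalgebra. For homomorphic images, a non-sharp $x$ (resp.\ $z$) can become sharp only if $[x]=[x\land x']=[0]$, which by antitonicity forces $[x']=[1]$ (resp.\ $[z']=[1]$); thus any ortholattice quotient of a subalgebra of $\algb_{8}^{*}$ is a quotient of $\algb_{2}\times\algb_{2}$ and so has at most four elements. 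Either way $\algb_{6}\notin\mathrm{HS}(\algb_{8}^{*})$, whence $\algb_{6}\notin\mathcal{W}$ and, by the first paragraph, $\algb_{8}\notin\mathcal{W}$. So $\mathcal{W}=\mathcal{V}(\algb_{8}^{*})$ is the required witness.

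The main obstacle is the finite but fiddly inspection underlying the last paragraph: determining $\mathrm{Sh}(\algb_{8}^{*})$, checking that sharpness is inherited by subalgebras and that no quotient manufactures a six-element ortholattice, together with the congruence computation confirming that $\algb_{6}$ is subdirectly irreducible. The sharp-element invariant is exactly what keeps this tractable; without it one would face an unwieldy walk through all subalgebras and congruences of $\algb_{8}^{*}$.
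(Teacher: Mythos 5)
Your proposal is correct and rests on the same two pillars as the paper's proof: the witness is the variety $\mathcal{V}(\algb_{8}^{*})$, which is not contained in $\mathcal{SPO}$ by Theorem \ref{thm: forbiddenconfigur}, and J\'onsson's Lemma (applicable because $\mathcal{PKL}$ has lattice reducts and is therefore congruence distributive) confines the subdirectly irreducible members of $\mathcal{V}(\algb_{8}^{*})$ to $\mathbb{H}\mathbb{S}(\algb_{8}^{*})$. Where you genuinely diverge is in how the finite verification is organized. The paper classifies \emph{all} subdirectly irreducible members of $\mathcal{V}(\algb_{8}^{*})$ --- they are $\algb_{8}^{*}$ itself or distributive --- observes that each satisfies (SP1), and then transports (SP1), a quasi-identity preserved by $\mathbb{S}$ and $\mathbb{P}$, to every member of the variety via Birkhoff's subdirect decomposition, so that Theorem \ref{thm: forb config} excludes $\algb_{6}$ and $\algb_{8}$ in one stroke. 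You instead exploit the subdirect irreducibility of $\algb_{6}$ itself to reduce the question to $\algb_{6}\in\mathbb{H}\mathbb{S}(\algb_{8}^{*})$, dispose of $\algb_{8}$ via the surjection $\algb_{8}\twoheadrightarrow\algb_{6}$ obtained by collapsing $\{0,z'\}$ and $\{z,1\}$, and rule out $\algb_{6}\in\mathbb{H}\mathbb{S}(\algb_{8}^{*})$ by the sharp-element count $|\mathrm{Sh}(\algb_{8}^{*})|=4$. Both routes are sound; yours buys a cleaner finite check (the sharp-element invariant replaces the inspection of all subalgebras and quotients of $\algb_{8}^{*}$), at the price of two extra verifications the paper does not need, namely that $\algb_{6}$ is subdirectly irreducible and that it is a homomorphic image of $\algb_{8}$. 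One cosmetic slip: $\theta(x,y)$ is not the \emph{only} proper nontrivial congruence of $\algb_{6}$ (the partition with blocks $\{0,x,y\}$ and $\{x',y',1\}$ yields another); what your case analysis actually establishes, and all that is needed, is that $\theta(x,y)$ is contained in every nontrivial congruence, i.e.\ that it is the monolith.
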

\begin{proof}
We show that $\mathcal{V}(\algb_{8}^{*})\subseteq\mathcal{PKL}$ does not contain members isomorphic to $\algb_{8}$ or to $\algb_{6}$. First, since $\mathcal{V}(\algb_{8}^{*})$ is a variety of algebras having a lattice reduct, then it is congruence distributive. Therefore, by J\'onnson's Lemma \cite[Corollary 6.10]{BS}, if $\alga$ is a subdirectly irreducible member of $\mathcal{V}(\algb_{8}^{*})$, then $\alga\in\mathbb{H}\mathbb{S}(\algb_{8}^{*})$. A direct inspection yields that $\algb_{8}^{*}$ is subdirectly irreducible and, moreover, any of its quotients is distributive. Also, apart from $\algb_{8}^{*}$ itself, any subalgebra of $\algb_{8}^{*}$ is distributive. Therefore, any subdirectly irreducible member of $\mathcal{V}(\algb_{8}^{*})$ different from $\algb_{8}^{*}$ must be distributive as well. As a consequence, since there is no subdirectly irreducible member of $\mathcal{V}(\algb_{8}^{*})$ having $\algb_{6}$ or $\algb_{8}$ as a subalgebra, and since (SP1) is preserved by direct products and subalgebras, one has that there is no subdirect product of subdirectly irreducible members of $\mathcal{V}(\algb_{8}^{*})$ which contains a subalgebra isomorphic to $\algb_{6}$ or to $\algb_{8}$. Therefore $\mathcal{V}(\algb_{8}^{*})$ does not contain $\algb_{8}$ or $\algb_{6}$ and, since by Theorem \ref{thm: forbiddenconfigur} $\mathcal{V}(\algb_{8}^{*})\nsubseteq\mathcal{SPO}$, the desired result obtains.
\end{proof}

\subsection{Some remarks on the theory of commutativity}
As it has been recalled in Section \ref{sec:preliminaries}, the theory of commutativity plays an important role in orthomodular posets (lattices) theory. In fact, commutativity might be somehow regarded as an order-theoretical, abstract counterpart of commutativity between projection operators. Now, it naturally raises the question if such a notion can be introduced in the framework of sp-orthomodular lattices to the effect that one has a finite set of equations  $\{\epsilon_{i}(x,y)\approx\delta_{i}(x,y)\}_{i\in\{1,\dots,n\}}$ in the variables $x,y$ such that, for any $\alga\in\SPO$, and $a,b\in A$, $\alga\models\epsilon_{i}(a,b)\approx\delta_{i}(a,b)$ (for any $1\leq i\leq n$) iff $\mathbf{Sg}^{\alga}(a,b)$ is distributive. 
The next results show that, in some cases, the notion of commutativity already introduced for OMPs still serves as a sufficient and necessary condition for distributivity. 

\begin{lemma}\label{lem: pink}Let $\alga\in\SPO$. If $x,y\in \mathrm{Sh}(\alga)$ and $x\mathrm{C} y$ in $\mathbf{Sh}(\alga)$, then $x\land^{\mathbf{Sh}(\alga)} y=x\land^{\alga} y$.
\end{lemma}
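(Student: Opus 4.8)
The plan is to reduce the whole statement to the paraorthomodular law, which $\alga$ satisfies because $\SPO\subsetneq\mathcal{POML}$. Write $p=x\land^{\mathbf{Sh}(\alga)}y$ and $q=x\land^{\mathbf{Sh}(\alga)}y'$; both exist because $x\C{}y$ holds in $\mathbf{Sh}(\alga)$. By construction $p\le x$, $p\le y$ and $q\le y'$, so $p$ is a common lower bound of $x$ and $y$ in $\alga$ and hence $p\le x\land^{\alga}y$. The entire content of the lemma is therefore the reverse inequality $x\land^{\alga}y\le p$, and I would derive this from the paraorthomodular law applied to $p\le x\land^{\alga}y$: it suffices to prove that $p'\land^{\alga}(x\land^{\alga}y)=0$.

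The core step is the identity $p'\land^{\alga}x=q$. First I would note that, by De Morgan, the complement of $p'\land^{\alga}x$ is $p\lor x'$; since $p\le x=(x')'$, the sharp elements $p$ and $x'$ are orthogonal, so Lemma \ref{lem:shortomdpos} makes $p\lor x'$ sharp, whence $p'\land^{\alga}x$ is itself sharp. Being sharp and equal to the greatest lower bound of $p'$ and $x$ in $\alga$, it must also be their greatest lower bound \emph{inside} $\mathrm{Sh}(\alga)$ (any sharp lower bound lies below it), i.e.\ $p'\land^{\alga}x=x\land^{\mathbf{Sh}(\alga)}p'$. Now, since $x\C{}y$ in the orthomodular poset $\mathbf{Sh}(\alga)$, the pair $\{x,y\}$, and hence $p=x\land^{\mathbf{Sh}(\alga)}y$ and $p'$, lies in a common Boolean sub-lattice $\algb$ of $\mathbf{Sh}(\alga)$ (cf.\ \cite{Pulman,Beran}). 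Evaluating inside $\algb$, where $p'=x'\lor y'$ and the lattice is distributive, gives $x\land^{\mathbf{Sh}(\alga)}p'=x\land(x'\lor y')=(x\land x')\lor(x\land y')=x\land y'=q$, so that $p'\land^{\alga}x=q$.

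With this identity the conclusion is immediate: $p'\land^{\alga}(x\land^{\alga}y)=(p'\land^{\alga}x)\land^{\alga}y=q\land^{\alga}y$, and because $q\le y'$ we get $q\land^{\alga}y\le y'\land^{\alga}y=0$ (here $y$ sharp is used). Feeding $p\le x\land^{\alga}y$ together with $p'\land^{\alga}(x\land^{\alga}y)=0$ into the paraorthomodular law yields $p=x\land^{\alga}y$, which is exactly $x\land^{\mathbf{Sh}(\alga)}y=x\land^{\alga}y$.

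The main obstacle is precisely the phenomenon of Example \ref{example:psklnotshsubalg}: for sharp elements the meet $\land^{\mathbf{Sh}(\alga)}$ and the meet $\land^{\alga}$ genuinely diverge in general, so no computation performed inside $\mathbf{Sh}(\alga)$ or inside a Boolean block may be transferred to $\alga$ for free. The whole argument hinges on the sharpness of the auxiliary element $p'\land^{\alga}x$, since that is what licenses identifying a meet taken in $\alga$ with one taken in $\mathbf{Sh}(\alga)$; this is the only place where the commutativity hypothesis $x\C{}y$ (via Lemma \ref{lem:shortomdpos} and the block structure of $\mathbf{Sh}(\alga)$) is actually exploited.
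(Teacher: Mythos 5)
Your proof is correct and takes essentially the same route as the paper's: both arguments reduce the claim to showing that the $\alga$-meet of $x\land^{\alga}y$ with the complement of the $\mathbf{Sh}(\alga)$-meet vanishes, and both compute that complemented meet via De Morgan, the agreement of orthogonal joins guaranteed by Lemma \ref{lem:shortomdpos}, and distributivity in the Boolean block of $\mathbf{Sh}(\alga)$ generated by the commuting pair. The only cosmetic differences are that you invoke the paraorthomodular law where the paper applies \eqref{sp} directly to the sharp element $c=x\land^{\mathbf{Sh}(\alga)}y$, and that you evaluate $p'\land^{\alga}x$ where the paper evaluates the symmetric quantity $y\land^{\alga}c'$.
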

\begin{proof}
To see this, suppose that $x\C{}y$ in $\mathbf{Sh}(\alga)$. Set $c:= x\land^{\mathbf{Sh}(\alga)}y$. One has $c\leq x\land^{\alga} y$. Therefore, by \eqref{sp}, one has $x\land^{\alga} y=(x\land^{\alga} y)\land (c\lor^{\alga} c')=c\lor^{\alga}((x\land^{\alga} y)\land^{\alga} c')$. Note that
\begin{align*}
y\land^{\alga} c' &= (y'\lor^{\alga} c)'\\
&= (y'\lor^{\alga} (x\land^{\mathbf{Sh}(\alga)} y))'\\
&= (y'\lor^{\mathbf{Sh}(\alga)} (x\land^{\mathbf{Sh}(\alga)} y))'\\
&= ((y'\lor^{\mathbf{Sh}(\alga)} x)\land^{\mathbf{Sh}(\alga)} (y'\lor^{\mathbf{Sh}(\alga)}y))'= y\land^{\mathbf{Sh}(\alga)}x'.
\end{align*}
Therefore, $(x\land^{\alga} y)\land^{\alga} c' = 0$. So $x\land^{\alga} y=c$.  
\end{proof}

\begin{corollary}\label{cor:ghz}Let $\alga\in\SPO$. If $x,y\in\mathrm{Sh}(\alga)$, then $\mathbf{Sg}(x,y)$ is distributive iff $x\C{}y$ in $\mathbf{Sh}(\alga)$. In particular, $\mathbf{Sg}(x,y)$ is a Boolean sub-algebra of $\alga$.
\end{corollary}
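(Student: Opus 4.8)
The plan is to prove the two implications separately, using Lemma \ref{lem: pink} as the bridge between meets computed in the orthomodular poset $\mathbf{Sh}(\alga)$ (which is in $\mathcal{OMP}$ by Lemma \ref{lem:shortomdpos}) and meets computed in $\alga$; the Booleanness asserted in the ``in particular'' clause will then drop out of the argument. For the direction ``$\mathbf{Sg}(x,y)$ distributive $\Rightarrow x\C{}y$ in $\mathbf{Sh}(\alga)$'', I would first note that when $x,y$ are sharp, distributivity forces \emph{every} element of $\mathbf{Sg}(x,y)$ to be sharp: since De Morgan's laws hold in any pseudo-Kleene lattice, expanding $(u\land v)\land(u\land v)'=(u\land v)\land(u'\lor v')$ and $(u\lor v)\land(u\lor v)'=(u\lor v)\land(u'\land v')$ by distributivity makes the factors $u\land u'$ and $v\land v'$ appear, and these vanish as soon as $u,v\in\mathrm{Sh}(\alga)$. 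Hence sharpness is closed under $\land,\lor,{}'$, so $\mathbf{Sg}(x,y)\subseteq\mathrm{Sh}(\alga)$; being distributive with $'$ a complementation, $\mathbf{Sg}(x,y)$ is therefore a Boolean algebra, which already yields the final clause. In particular $x\land^{\alga}y$ and $x\land^{\alga}y'$ are sharp, and as $\alga$-greatest lower bounds they are a fortiori the greatest lower bounds inside the sub-poset $\mathbf{Sh}(\alga)$, so $x\land^{\mathbf{Sh}(\alga)}y=x\land^{\alga}y$ and likewise for $y'$. Finally $x\land y$ and $x\land y'$ are orthogonal (as $x\land y\le y\le(x\land y')'$), so their $\alga$-join coincides with their $\mathbf{Sh}(\alga)$-join by the sub-orthomodular poset property, and by distributivity and sharpness of $y$ it equals $x\land(y\lor y')=x$. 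This is exactly the identity $x=(x\land y)\lor(x\land y')$ in $\mathbf{Sh}(\alga)$, i.e. $x\C{}y$ there.

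For the converse, I would invoke the characterisation of commuting pairs in orthomodular posets \cite{Pulman}: since $x\C{}y$ in the OMP $\mathbf{Sh}(\alga)$, there is a Boolean sub-lattice $\algb$ of $\mathbf{Sh}(\alga)$ with $x,y\in B$. Let $C$ be the finite Boolean subalgebra of $\algb$ generated by $x$ and $y$. Every pair $u,v\in C$ satisfies $\{u,v\}\subseteq B$ and therefore commutes in $\mathbf{Sh}(\alga)$, by the same characterisation. The decisive step is to transfer the computations from $\mathbf{Sh}(\alga)$ to $\alga$: Lemma \ref{lem: pink} gives $u\land^{\alga}v=u\land^{\mathbf{Sh}(\alga)}v$, and applying it to the commuting pair $u',v'$ together with De Morgan's laws in $\alga$ and in $\mathbf{Sh}(\alga)$ yields $u\lor^{\alga}v=u\lor^{\mathbf{Sh}(\alga)}v$. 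Hence $C$ is closed under the operations of $\alga$, on which they restrict to the Boolean operations of $C$; so $C$ is the universe of a Boolean subalgebra of $\alga$ containing $x,y$, whence $\mathbf{Sg}(x,y)=C$ is Boolean, and in particular distributive.

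The step I expect to be the main obstacle is precisely this transfer of lattice operations: one must guarantee that the glb and lub taken inside the \emph{poset} $\mathbf{Sh}(\alga)$ coincide with the meet and join of the Boolean algebra $C$ and with the lattice operations of $\alga$, rather than being mere lower/upper bounds. Lemma \ref{lem: pink} secures the meet directly and, via complementation and De Morgan, the join; the complementary point that the $\alga$-meet of two sharp elements is again sharp and is the $\mathbf{Sh}(\alga)$-meet is what makes $\mathbf{Sh}(\alga)$ closed and compels $C$ to be a genuine subalgebra of $\alga$. Once this bookkeeping is in place, both implications and Booleanness follow without further computation.
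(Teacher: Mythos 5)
Your proposal is correct and follows essentially the same route as the paper: the paper's (very terse) proof likewise handles the forward direction by the ``straightforward check'' that distributivity forces $x\land y,x\land y'$ to be sharp with $x=(x\land y)\lor(x\land y')$, and the converse by combining Lemma \ref{lem: pink} with the fact that commuting elements of an orthomodular poset generate a Boolean subalgebra of mutually commuting elements. You have merely filled in the bookkeeping (closure of sharpness under the operations, and the transfer of meets and joins between $\mathbf{Sh}(\alga)$ and $\alga$) that the paper leaves implicit.
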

\begin{proof}
The right-to-left direction easily follows from Lemma \ref{lem: pink} upon noticing that in any orthomodular poset $\algp$, if $x,y$ commute, then they generate a Boolean sub-algebra of $\algp$ of mutually commuting elements. Conversely, note that, if $\mathbf{Sg}(x,y)$ is distributive, then a straightforward check shows that $x\land y,x\land y'\in\mathrm{Sh}(\alga)$ and $x = (x\land y)\lor(x\land y')$.
\end{proof}
As a consequence, one has that
\begin{corollary}
Let $\alga\in\mathcal{SPO}$. If $x\land x'=y\land y'$, then $\mathbf{Sg}(x,y)$ is distributive if and only if $x\mathrm{C}y$ in $\mathbf{Sh}(\mathbf{Local}(x,y))$. 
\end{corollary}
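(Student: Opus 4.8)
The plan is to reduce the statement to Corollary \ref{cor:ghz} by passing to the localizer $\mathbf{Local}(x,y)$. The crucial first observation is that the hypothesis $x\land x'=y\land y'$ already forces $x$ and $y$ into $\mathbf{Sh}(\mathbf{Local}(x,y))$. Indeed, applying ${}'$ and De Morgan's laws to $x\land x'=y\land y'$ yields $x\lor x'=y\lor y'$, whence $0_{x,y}=(x\land x')\lor(y\land y')=x\land x'$ and $1_{x,y}=(x\lor x')\land(y\lor y')=x\lor x'$. Since $x\land x'\leq x\leq x\lor x'$ (and likewise for $y$), both $x$ and $y$ lie in $[0_{x,y},1_{x,y}]$; moreover $x\land x'=0_{x,y}=y\land y'$, so $x,y\in\mathrm{Sh}(\mathbf{Local}(x,y))$. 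This is the step that makes the whole argument go through, and I would present it first.

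Next I would observe that $\mathbf{Local}(x,y)$ is itself super-paraorthomodular. It is a pseudo-Kleene lattice whose operations $\land,\lor,{}'$ are inherited from $\alga$, and by Theorem \ref{rem: sp-equational} membership in $\mathcal{SPO}$ is governed by an identity in the language $\{\land,\lor,{}'\}$ alone. Since that identity holds in $\alga$ and all the relevant terms are evaluated identically inside the interval $[0_{x,y},1_{x,y}]$, it holds in $\mathbf{Local}(x,y)$ as well; hence $\mathbf{Local}(x,y)\in\mathcal{SPO}$. Corollary \ref{cor:ghz} then applies to $\mathbf{Local}(x,y)$ with the sharp pair $x,y$, giving that $\mathbf{Sg}^{\mathbf{Local}(x,y)}(x,y)$ is distributive iff $x\C{}y$ in $\mathbf{Sh}(\mathbf{Local}(x,y))$.

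It remains to transfer distributivity between $\mathbf{Sg}^{\mathbf{Local}(x,y)}(x,y)$ and $\mathbf{Sg}(x,y)=\mathbf{Sg}^{\alga}(x,y)$. Every element generated from $x,y$ by $\land,\lor,{}'$ lies in $[0_{x,y},1_{x,y}]$, and the localizer bounds $0_{x,y}=x\land x'$, $1_{x,y}=x\lor x'$ are themselves such terms; hence $\mathbf{Sg}^{\mathbf{Local}(x,y)}(x,y)$ is precisely the set of these terms. Passing to $\alga$ adds only the global bounds $0,1$, which interact trivially with the operations, so the universe of $\mathbf{Sg}^{\alga}(x,y)$ is that of $\mathbf{Sg}^{\mathbf{Local}(x,y)}(x,y)$ together with $\{0,1\}$; its lattice reduct is obtained from that of $\mathbf{Sg}^{\mathbf{Local}(x,y)}(x,y)$ by adjoining (when $x\land x'\neq 0$) a new bottom and a new top, i.e. it has the form $0\oplus\mathbf{Sg}^{\mathbf{Local}(x,y)}(x,y)^{\ell}\oplus 1$. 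By Remark \ref{rem:ordsum}, the two are distributive together. Chaining the equivalences delivers the claim.

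The main obstacle I anticipate is this last bookkeeping step: one must argue carefully that $\mathbf{Sg}^{\alga}(x,y)$ and $\mathbf{Sg}^{\mathbf{Local}(x,y)}(x,y)$ differ by at most the two global bounds $\{0,1\}$, treating the degenerate case $x\land x'=0$ (where the two subalgebras simply coincide) separately, so that Remark \ref{rem:ordsum} can be invoked legitimately. Everything else is either the neat involutive identity $x\lor x'=y\lor y'$ or a direct appeal to Corollary \ref{cor:ghz} applied to the localizer.
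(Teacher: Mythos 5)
Your proof is correct and follows essentially the same route as the paper's: pass to the localizer, apply Corollary \ref{cor:ghz} to the pair $x,y$ (which the hypothesis makes sharp there), and transfer distributivity back to $\mathbf{Sg}(x,y)$ via the decomposition $0\oplus(\mathbf{Sg}^{\mathbf{Local}(x,y)}(x,y))^{\ell}\oplus 1$ and Remark \ref{rem:ordsum}. You make explicit two points the paper leaves implicit --- that $x\land x'=y\land y'$ forces $x,y\in\mathrm{Sh}(\mathbf{Local}(x,y))$, and that $\mathbf{Local}(x,y)\in\mathcal{SPO}$ because the defining identity does not involve the bounds --- and you derive the forward direction from the same reduction where the paper instead verifies commutativity directly; both are fine.
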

\begin{proof}
The right-to-left direction follows upon noticing that, if $\mathrm{Sg}(x,y)$ is distributive, then $x\land y, x'\land y\in\mathrm{Sh}(\mathbf{Local}(x,y))$ and $(x\land y)\lor(x'\land y)=y$. Conversely, if $x\land x'=y\land y'$, then  $\mathbf{Sg}(x,y)=(\{0,1\}\cup\mathrm{Sg}^{\mathbf{Local}(x,y)}(x,y),\land,\lor,',0,1)$, where $\land,\lor,'$ are inherited from $\alga$. Since $x\C{}y$, then $\mathbf{Sg}^{\mathbf{Local}(x,y)}(x,y)$ is distributive, by Corollary \ref{cor:ghz}. Moreover, $\mathbf{Sg}(x,y)^{\ell}$ is isomorphic to $0\oplus(\mathrm{Sg}^{\mathbf{Local}(x,y)}(x,y))^{\ell}\oplus 1$ which is distributive, by Remark \ref{rem:ordsum}.   
\end{proof}
As a further application of Corollary \ref{cor:ghz}, we obtain another characterization of sp-orthomodular lattices within $\mathcal{PKL}$.
\begin{lemma}Let $\alga\in\mathcal{PKL}$. Then $\alga\in\mathcal{SPO}$ if and only if, for any $x,y\in A$, if $x\leq y$, then $\pi_{0_{x,y}}(x),\pi_{0_{x,y}}(y)$ generate a Boolean subalgebra of $\mathbf{Local}(x,y)$.
\end{lemma}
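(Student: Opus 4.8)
The plan is to prove both implications by transferring everything to the localizer $\mathbf{Local}(x,y)$ and exploiting the crucial feature that the quasi-equation \eqref{sp} (equivalently, membership in $\SPO$ by Theorem \ref{eqn: charact}) is formulated purely in terms of $\land,\lor,{}'$, with no occurrence of the bounds $0,1$. The computational backbone is the identification, valid in any $\alga\in\mathcal{PKL}$ whenever $x\leq y$, of $p:=\pi_{0_{x,y}}(x)=x\lor(y\land y')$ and $q:=\pi_{0_{x,y}}(y)=y\land(x\lor x')$; these are exactly the formulas computed in the proof of Lemma \ref{lem: gigino}. Using the Kleene condition one checks directly that $p\leq q$, since $x\leq q$ and $y\land y'\leq q$ (as $y\land y'\leq y$ and $y\land y'\leq y'\leq x'\leq x\lor x'$), whence $p=x\lor(y\land y')\leq q$.

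For the left-to-right direction, assume $\alga\in\SPO$. First I would record that the pseudo-Kleene lattice $\mathbf{Local}(x,y)$ is closed under $\land,\lor,{}'$: it is an interval, hence closed under meets and joins, and De Morgan gives $1_{x,y}'=0_{x,y}$, so $'$ maps $[0_{x,y},1_{x,y}]$ into itself. Since \eqref{sp} mentions no constants, it is inherited by this closed substructure, and by Theorem \ref{eqn: charact} we obtain $\mathbf{Local}(x,y)\in\SPO$. Consequently $\mathbf{Sh}(\mathbf{Local}(x,y))$ is an orthomodular poset (Lemma \ref{lem:shortomdpos}), and by Lemma \ref{lem: gigino}(1) both $p$ and $q$ belong to it. As $p\leq q$, they commute in $\mathbf{Sh}(\mathbf{Local}(x,y))$ (comparable elements of an OMP always commute), so Corollary \ref{cor:ghz}, applied with $\mathbf{Local}(x,y)$ in place of $\alga$, shows that $p,q$ generate a Boolean subalgebra of $\mathbf{Local}(x,y)$.

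For the converse, assume the stated condition and let $\algb$ be the Boolean subalgebra of $\mathbf{Local}(x,y)$ generated by $p$ and $q$. Being members of a Boolean subalgebra, $p$ and $q$ are sharp in $\mathbf{Local}(x,y)$ (there $'$ acts as complementation, so $p\land p'=0_{x,y}$ and $p\lor p'=1_{x,y}$), and $p'\in\algb$. A one-line De Morgan computation gives $q\land p'=y\land(x\lor x')\land x'\land(y\lor y')=x'\land y$, while $q\land p=p$ because $p\leq q$. Since $\algb$ is Boolean with top $1_{x,y}=p\lor p'$ and all operations are inherited, distributivity yields $q=(q\land p)\lor(q\land p')=p\lor(x'\land y)=x\lor(y\land y')\lor(x'\land y)$; and because $y\land y'\leq x'\land y$ this collapses to $q=x\lor(x'\land y)$. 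Recalling $q=y\land(x\lor x')$, this is precisely \eqref{sp}, so $\alga\in\SPO$ by Theorem \ref{eqn: charact}.

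The step I expect to be the main obstacle is the transfer $\mathbf{Local}(x,y)\in\SPO$: it relies on \eqref{sp} being constant-free together with closure of the interval under $'$ via $1_{x,y}'=0_{x,y}$, and it is precisely what lets us invoke the commutativity machinery (Corollary \ref{cor:ghz}) \emph{inside} the localizer rather than in $\alga$. A secondary point requiring care throughout is that every meet and join appearing in the Boolean decomposition is computed identically in $\algb$, in $\mathbf{Local}(x,y)$ and in $\alga$; this is guaranteed because Kleene sublattices inherit the lattice operations, so no ambiguity arises.
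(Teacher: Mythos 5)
Your proof is correct and follows essentially the same route as the paper's: both directions pass to $\mathbf{Local}(x,y)$, use Lemma \ref{lem: gigino} for sharpness of $\pi_{0_{x,y}}(x)\leq\pi_{0_{x,y}}(y)$ and Corollary \ref{cor:ghz} for the forward implication, and exploit Boolean distributivity of the generated subalgebra to recover \eqref{sp} for the converse. The only difference is cosmetic: you explicitly justify that the localizer inherits \eqref{sp} (constant-freeness plus closure of the interval under $'$), a step the paper asserts without comment, and your converse computation decomposes $q=(q\land p)\lor(q\land p')$ rather than building up from $x\lor(x'\land y)$ as the paper does—these are the same distributivity argument read in opposite directions.
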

\begin{proof}
Concerning the left-to-right direction, assume that $x\leq y$. Note that $\mathbf{Local}(x,y)$ can be regarded as an sp-orthomodular lattice as well in which $\pi_{0_{x,y}}(x)=x\lor(y\land y')$ and $\pi_{0_{x,y}}(y)=y\land(x\lor x')$ are both sharp (Lemma \ref{lem: gigino}). Since $x\lor (y\land y')\leq y\land(x\lor x')$, $x\lor (y\land y')\C{}y\land(x\lor x')$ in $\mathbf{Sh}(\mathbf{Local}(x,y))$. By Corollary \ref{cor:ghz}, they generate a Boolean sub-algebra of $\mathbf{Local}(x,y)$.\\ 
The converse easily follows upon noticing that $x\leq y$ implies $\pi_{0_{x,y}}(y)=y\land (x\lor x')$, $\pi_{0_{x,y}}(x)=x\lor(y\land y')$, and $x\lor(x'\land y)=x\lor(y\land y')\lor((x'\land (y\lor y'))\land(y\land(x\lor x')))=\pi_{0_{x,y}}(x)\lor(\pi_{0_{x,y}}(x)'\land\pi_{0_{x,y}}(y))=(\pi_{0_{x,y}}(x)\lor\pi_{0_{x,y}}(x)')\land(\pi_{0_{x,y}}(x)\lor\pi_{0_{x,y}}(y))=\pi_{0_{x,y}}(y)=y\land(x\lor x')$. 
\end{proof}
Now, one might wonder if a general notion of commutativity may be introduced for arbitrary pairs of elements of an sp-orthomodular lattice.
Indeed, in the last part of this section, we show that, at least for the variety of modular pseudo-Kleene lattices, a solution to this problem is an easy consequence of a well known result provided by B.~J\'onsson in \cite{Jonsson}. As a consequence, in view of Lemma \ref{lem:finitedimmodular}, a characterization of pairs of effects generating a Kleene sub-lattice of $\mathbf{E}(\mathcal{H})$, once $\mathcal{H}$ is assumed to be finite-dimensional, obtains.
\begin{lemma}\label{Jonsson}(\cite[Theorem 6]{Jonsson})Suppose that $\alga$ is a modular lattice and let $\algb$ and $\algc$ be distributive
sublattices of $\alga$. Then the sublattice of $\alga$ generated by $B\cup C$ is distributive if and only if, for any $\{x_{1},x_{2},x\}\subseteq B$ and $\{y_{1},y_{2},y\}\subseteq C$, it holds that
\[(x_{1}\lor x_{2})\land y=(x_{1}\land y)\lor(x_{2}\land y)\text{ and }(y_{1}\lor y_{2})\land x= (y_{1}\land x)\lor(y_{2}\land x).\] 
\end{lemma}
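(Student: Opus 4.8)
The \emph{only-if} direction is immediate: if the sublattice generated by $B\cup C$ is distributive, then the two displayed identities are mere instances of the distributive law, since $x_1,x_2,x\in B$ and $y_1,y_2,y\in C$ all lie in that sublattice. Hence the entire content sits in the converse, and that is where I would concentrate. Write $\algd$ for the sublattice of $\alga$ generated by $B\cup C$. As $\alga$ is modular, so is $\algd$; and by Birkhoff's characterization (see \cite{Gratzer}) a modular lattice is distributive precisely when it contains no sublattice isomorphic to $\mathbf{M}_{3}$, modularity already excluding $\mathbf{N}_{5}$. Conceptually, the two mixed conditions are exactly the extra hypotheses needed to upgrade the classical fact that two \emph{chains} in a modular lattice generate a distributive sublattice to the case of two arbitrary distributive sublattices: for chains the displayed identities hold vacuously, since $x_1\lor x_2\in\{x_1,x_2\}$. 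The plan is therefore to adapt the chain argument, invoking the mixed laws wherever the chain structure was previously used.

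Concretely, I would set up a normal form. Call an element of $\algd$ \emph{basic} if it has one of the shapes $b$, $c$, or $b\land c$ with $b\in B$ and $c\in C$. A direct check, using $b_1\land b_2\in B$, $c_1\land c_2\in C$, and associativity of $\land$, shows that the basic elements are closed under $\land$. The key assertion to establish is that every element of $\algd$ is a finite join of basic elements. Joins of basic elements are trivially closed under $\lor$, so the crux is closure under $\land$: given $d=\bigvee_i t_i$ with each $t_i$ basic and a basic $s$, one must rewrite $d\land s$ as a join of basics and then iterate. Since distributivity is precisely what we are trying to prove, we cannot simply expand $d\land s$ over the join, and this is the heart of the matter.

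The main obstacle is therefore exactly this closure step, which I expect to settle by induction on the number of join-summands, feeding in modularity and the two mixed laws in tandem. When the basic factor to be distributed lies in $B$ or in $C$ alone, the relevant instance is literally one of the displayed identities, extended to finitely many summands by the distributivity internal to $B$, respectively $C$. The genuinely delicate case is a mixed term $b_3\land c_3$ meeting a join $(b_1\land c_1)\lor(b_2\land c_2)$ of mixed terms: here I would use the modular law to pivot the meet past one summand at a time, reorganising the remaining terms by means of the mixed laws so that the inductive hypothesis applies. This is where modularity is indispensable, mirroring its role in the two-chain argument.

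Once the class of finite joins of basic elements is shown to be closed under both operations, it coincides with $\algd$, and the distributive identity $(p\lor q)\land r=(p\land r)\lor(q\land r)$ can be verified directly on such normal forms by a further routine induction resting again on the two mixed laws together with the distributivity of $B$ and of $C$. This yields distributivity of $\algd$, equivalently the absence of an $\mathbf{M}_{3}$-sublattice, and completes the converse. I would expect the bookkeeping in the two-summand mixed case to be the only real difficulty; everything else reduces to careful but mechanical applications of the hypotheses.
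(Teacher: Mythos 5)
First, a point of comparison: the paper does not prove this lemma at all --- it is quoted verbatim as Theorem~6 of J\'onsson's 1955 paper and used as a black box. So any complete argument you gave would necessarily be ``a different route''; the real question is whether your sketch actually constitutes a proof, and it does not. Your treatment of the only-if direction is correct and complete, and the overall plan for the converse (show that every element of the generated sublattice is a finite join of ``basic'' elements $b$, $c$, $b\land c$, and that meets of basics distribute over joins of basics) is a reasonable one. But the entire content of J\'onsson's theorem is concentrated in the single step you defer: showing that
$\bigl((b_{1}\land c_{1})\lor(b_{2}\land c_{2})\bigr)\land(b_{3}\land c_{3})\leq(b_{1}\land b_{3}\land c_{1}\land c_{3})\lor(b_{2}\land b_{3}\land c_{2}\land c_{3})$,
and the phrase ``use the modular law to pivot the meet past one summand at a time'' is not an argument. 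The modular law $(x\lor y)\land z=x\lor(y\land z)$ requires the comparability $x\leq z$, and no such comparability is available between mixed terms such as $b_{1}\land c_{1}$ and $b_{3}\land c_{3}$. The obvious pivot --- replace $(p\land s)\lor(q\land s)$ by $((p\land s)\lor q)\land s$ and try to squeeze $(p\lor q)\land s$ underneath --- leads to the inequality $(p\lor q)\land s\leq q\lor(p\land(s\lor q))$, which points in the wrong direction relative to what is needed; closing that gap is exactly where the mixed distributive laws must be injected in a non-mechanical way. Your step 4 (verifying distributivity on normal forms) is moreover not a separate ``routine induction'': it is the same statement as the closure step, so nothing in the proposal actually discharges it.

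A second, related omission: a workable proof of this theorem (including J\'onsson's own) leans on the nontrivial fact that in a modular lattice a \emph{single} instance of the median/distributive identity for a triple $\{a,b,c\}$ already forces the whole sublattice generated by $\{a,b,c\}$ to be distributive; this three-element case (together with a reduction of general normal forms to ones satisfying chain-like monotonicity constraints on the $b_{i}$ and $c_{i}$) is the engine that makes the induction go through. Your sketch never isolates this ingredient, and without it the induction on the number of join-summands has no base beyond the trivially handled unmixed cases. In short: the easy direction is fine, the strategy is plausible, but the decisive computation is asserted rather than performed, and the specific mechanism you name for performing it does not apply as stated. Given that the paper simply cites \cite[Theorem 6]{Jonsson}, the appropriate fix here is either to do the same or to carry out the normal-form argument in full, including the three-element lemma.
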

First, we observe the following fact.
\begin{lemma}\label{lem:auxcom2}Suppose that $\alga$ is a bounded lattice with antitone involution and let $\algb$ and $\algc$ be subalgebras of $\alga$. Then the bounded sublattice $\algd$ of $\alga^{\ell}$ generated by $B\cup C$ is closed under ${}'$. Therefore, $\algd$ is a subalgebra of $\alga$. 
\end{lemma}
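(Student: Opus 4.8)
The plan is to reduce everything to the De Morgan laws, which hold in any bounded lattice with antitone involution and which let us ``push ${}'$ through'' the lattice operations that build up $\algd$. So the first thing I would do is record, for all $x,y\in A$,
\[
(x\land y)'=x'\lor y',\qquad (x\lor y)'=x'\land y',\qquad 0'=1,\qquad 1'=0.
\]
These follow purely from the two defining properties of an antitone involution (order-reversal and $x''=x$). For instance, $x\land y\le x$ yields $x'\le (x\land y)'$, and likewise $y'\le(x\land y)'$, so $x'\lor y'\le (x\land y)'$; the reverse inequality is obtained by applying ${}'$ to $x\land y\le (x'\lor y')'$, which in turn follows from $x\le(x'\lor y')'$ and $y\le(x'\lor y')'$. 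The join case is dual, and $0'=1$, $1'=0$ hold because ${}'$ is an order-reversing bijection, hence carries the bottom to the top and vice versa.

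With this in hand, I would argue as follows. Let $D'=\{d':d\in D\}$ be the image of $D$ under ${}'$, and check that $D'$ is again a bounded sublattice of $\alga^{\ell}$ containing $B\cup C$. Indeed $B=B'\subseteq D'$ (since $\algb$ is a subalgebra, $B$ is closed under ${}'$, and $B\subseteq D$), and similarly $C\subseteq D'$; moreover $0=1'\in D'$ and $1=0'\in D'$. Closure of $D'$ under the operations is exactly where De Morgan does the work: if $a=d_1'$ and $b=d_2'$ with $d_1,d_2\in D$, then $a\land b=(d_1\lor d_2)'\in D'$ and $a\lor b=(d_1\land d_2)'\in D'$, using that $D$ is closed under $\land,\lor$. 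Since $\algd$ is by definition the \emph{least} bounded sublattice of $\alga^{\ell}$ containing $B\cup C$, we get $D\subseteq D'$; applying ${}'$ to this inclusion and invoking $x''=x$ gives $D'\subseteq D$, whence $D=D'$, i.e. $D$ is closed under ${}'$. (Alternatively, one could prove $d'\in D$ by a routine induction on the length of a lattice term expressing $d$ in generators from $B\cup C\cup\{0,1\}$, the inductive step being precisely the two De Morgan identities; this is the same argument phrased term-by-term.) Finally, $\algd$ contains $0,1$ and is closed under $\land,\lor$ and ${}'$, so it is a subalgebra of $\alga$, as claimed.

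The main point requiring genuine care is the very first step, namely deriving the De Morgan laws (and $0'=1$, $1'=0$) from the bare axioms, since the definition of antitone involution only posits order-reversal and $x''=x$; once these identities are available the rest is bookkeeping, so I do not expect any deeper obstacle.
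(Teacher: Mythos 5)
Your proof is correct, but it reaches the conclusion by a genuinely different route than the paper. The paper constructs $D$ explicitly from below, as the union of the chain $S_{0}=B\cup C$, $S_{i+1}=S_{i}\cup\{a\lor b,\ a\land b:a,b\in S_{i}\}$, and then shows closure under ${}'$ by induction on $i$, with De Morgan's laws supplying the inductive step --- exactly the ``term-by-term'' variant you mention in passing. Your primary argument instead works from above: you verify that the image $D'=\{d':d\in D\}$ is itself a bounded sublattice of $\alga^{\ell}$ containing $B\cup C$, invoke the minimality of $\algd$ to get $D\subseteq D'$, and apply the involution once more to conclude $D=D'$. This is slicker, replacing the induction by a one-line closure argument, at the mild cost of relying on $\algd$ being the \emph{least} bounded sublattice containing $B\cup C$ (which the paper's generation argument establishes along the way). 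Both proofs ultimately rest on the same two facts: the De Morgan laws and the closure of $B$ and $C$ under ${}'$.

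One small correction to your sketch of De Morgan: the auxiliary inequalities should run the other way. From $x'\leq x'\lor y'$ and $y'\leq x'\lor y'$, antitonicity gives $(x'\lor y')'\leq x$ and $(x'\lor y')'\leq y$, hence $(x'\lor y')'\leq x\land y$, and one further application of ${}'$ yields $(x\land y)'\leq x'\lor y'$. As written, your claim $x\leq(x'\lor y')'$ is false in general (in a Boolean algebra it amounts to $x\leq y$). This is purely a slip of direction; the identities you need do hold in every bounded lattice with antitone involution, and the paper uses them without comment.
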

\begin{proof}
Note that $D$ can be obtained as follows. Set:\[S_{0}:=B\cup C,\quad S_{i+1}:=S_{i}\cup\{a\lor b,a\land b:a,b\in S_{i}\},\text{ and } S:=\bigcup_{i\geq 0}S_{i}.\]
Clearly, $S\subseteq D$. Moreover, customary arguments yield that $S$ is closed under $1,0,\land,\lor$. So $S=D$. Therefore, in order to show that $D$ is closed under $'$, we can reason by induction on $i$. The case $i=0$ is obvious since $\algb$ and $\algc$ are subalgebras of $\alga$. Assume that the statement  holds for $i>0$. We show that it holds for $i+1$. Let $a\in S_{i+1}$. If $a\in S_{i}$, then $a'\in S_{i}$ by the induction hypothesis. Otherwise, $a=a_{1}\lor a_{2}$ or $a=a_{1}\land a_{2}$, for some $a_{1},a_{2}\in S_{i}$. We consider the former case only, since the latter can be treated dually. By Induction hypothesis, we have that $a_{1}',a_{2}'\in S_{i}$. So $(a_{1}\lor a_{2})'=a_{1}'\land a_{2}'\in S_{i+1}$.
\end{proof}
\begin{definition}\label{def:commmodular}
Let $\alga\in\mathcal{PKL}$ and $a,b\in A$. We say that $a$ \emph{commutes} with $b$, written $a\mathrm{C}b$, provided that the following hold:
\begin{enumerate}[(C1)]
\item $a\land(b\lor b')=(a\land b)\lor(a\land b')$;
\item $b\land(a\lor a')=(b\land a)\lor(b\land a')$;
\item $a\land a' = ((a\land a')\land b)\lor((a\land a')\land b').$
\end{enumerate}
\end{definition}
\begin{lemma}\label{aux:com1}Let $\alga\in\mathcal{MPKL}$. If $x\C{}y$, then the following hold:

\[x\lor(x'\land y')=(x\lor x')\land(x\lor y')\]

\end{lemma}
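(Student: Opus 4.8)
The plan is to prove the identity by a short sequence of modular-law manipulations, drawing on the hypothesis $x\mathrm{C}y$ only through the De Morgan dual of condition (C2). Since the inclusion $x\lor(x'\land y')\leq(x\lor x')\land(x\lor y')$ holds in every lattice, only the reverse direction needs work, and I will in fact establish the equality outright. The guiding idea is first to peel the copy of $x$ off the right-hand side by modularity, thereby reducing the whole problem to analysing the single term $x'\land(x\lor y')$.

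First I would use that $x\leq x\lor y'$ so that the modular law (with $a=x$, $b=x'$, $c=x\lor y'$) gives
\[(x\lor x')\land(x\lor y')=x\lor\bigl(x'\land(x\lor y')\bigr).\]
It therefore suffices to show that $x'\land(x\lor y')$ is absorbed by $x$ into $x\lor(x'\land y')$. To compute this term I would invoke commutativity: applying ${}'$ to (C2), i.e.\ to $y\land(x\lor x')=(y\land x)\lor(y\land x')$, and using De Morgan yields the dual identity
\[y'\lor(x\land x')=(y'\lor x')\land(y'\lor x).\]
Because $x'\leq x'\lor y'$, absorption lets me insert the factor $x'\lor y'$ without changing anything, and then I substitute via the dual identity:
\[x'\land(x\lor y')=x'\land(x'\lor y')\land(x\lor y')=x'\land\bigl(y'\lor(x\land x')\bigr).\]

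A second application of the modular law, legitimate because $x\land x'\leq x'$, rewrites the right-hand side as $(x'\land y')\lor(x\land x')$. Feeding this back and using $x\land x'\leq x$ to swallow the stray meet, I obtain $x\lor\bigl(x'\land(x\lor y')\bigr)=x\lor(x'\land y')\lor(x\land x')=x\lor(x'\land y')$, which together with the first display closes the argument. The computation is short, so there is no serious obstacle; the only points demanding care are bookkeeping ones. One must verify that each use of the modular law is licensed by the correct comparability ($x\leq x\lor y'$ in the first step, $x\land x'\leq x'$ in the second) and must resist distributing freely: the single input that goes beyond pure lattice theory is the distributivity-flavoured identity $(x'\lor y')\land(x\lor y')=y'\lor(x\land x')$, which is precisely what (C2) supplies and which fails in a general modular pseudo-Kleene lattice. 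Notably, for this particular identity conditions (C1) and (C3) are not needed at all.
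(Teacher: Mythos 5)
Your proof is correct and is essentially the paper's own argument run in reverse: the paper starts from $x\lor(x'\land y')$, absorbs $x\land x'$, applies modularity at $x\land x'\leq x'$, substitutes the De Morgan dual of (C2), absorbs, and finishes with modularity at $x\leq y'\lor x$, which is exactly your chain read backwards. Your remark that only (C2) is needed is likewise borne out by the paper's computation.
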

\begin{proof}
(1). We compute 
\begin{align*}
x\lor(x'\land y') =&\ x\lor(x'\land y')\lor(x\land x')\\
=&\ x\lor(x'\land(y'\lor(x\land x')))\\
=&\ x\lor(x'\land(y'\lor x)\land(y'\lor x'))\\
=&\ x\lor(x'\land(y'\lor x))\\
=&\ (y'\lor x)\land(x'\lor x).   
\end{align*}
 
\end{proof}
\begin{lemma}\label{lem:commutativity}Let $\alga\in\mathcal{MPKL}$. Then, for any $x,y\in A$, the following are equivalent:
\begin{enumerate}
\item $x\C{} y$;
\item $y\C{} x$;
\item $x\C{} y'$;
\item $x'\C{}y$;
\item $x\C{}y$ and $(x\lor x')\land(y\lor y')=((x\lor x')\land y)\lor((x\lor x')\land y')$.

\end{enumerate}
\end{lemma}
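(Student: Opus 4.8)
The plan is to exploit the fact that the three defining clauses (C1), (C2), (C3) of $x\C{}y$ behave in a highly structured way under the three involutive substitutions $a\mapsto a'$, $b\mapsto b'$ and $(a,b)\mapsto(b,a)$. A direct inspection (using $a''=a$ and the commutativity of $\land,\lor$) shows that each substitution fixes exactly two of the three clauses and replaces the remaining one by its ``mate'': passing from $a\C{}b$ to $a'\C{}b$ leaves (C2) and (C3) unchanged and only turns (C1) into the same identity with $a$ replaced by $a'$; passing to $a\C{}b'$ leaves (C1), (C3) unchanged and flips (C2); and passing to $b\C{}a$ leaves the pair $\{(\mathrm{C}1),(\mathrm{C}2)\}$ invariant and only replaces (C3) for $a$ by the corresponding clause for $b$. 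Consequently the equivalences (1)$\Leftrightarrow$(4), (1)$\Leftrightarrow$(3) and (1)$\Leftrightarrow$(2) each amount to flipping a single clause while keeping the other two. Moreover, since each substitution is an involution carrying the ``fixed'' hypotheses onto themselves, in every case it suffices to prove one implication: the converse is obtained by applying the same implication to $a'$, to $b'$, or to the swapped pair.

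The engine of each flip is a single lattice inequality. In every clause the inequality $\geq$ is automatic (each joinand on the right lies below the left-hand meet), so what must be shown is always of the shape $u\land(v\lor v')\leq(u\land v)\lor(u\land v')$ for suitable $u,v$. The tools are modularity together with the Kleene condition, which guarantees $a\land a'\leq b\lor b'$ and $b\land b'\leq a\lor a'$; this is what places $a\land a'$ and $b\land b'$ inside the intervals $[b\land b',b\lor b']$ and $[a\land a',a\lor a']$ respectively, so that modularity may be applied over these intervals. Concretely, I would meet (or join) the clause to be used with one of $a,a',b,b'$, rewrite the resulting expression via the De Morgan dual of the hypothesis, and then collapse it by repeated use of the modular law; Lemma~\ref{aux:com1} is convenient here for rewriting recurring subterms such as $x\lor(x'\land y')$ as $(x\lor x')\land(x\lor y')$.

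For the last equivalence, note that (5) contains (1) verbatim, so (5)$\Rightarrow$(1) is immediate, and the content of (1)$\Rightarrow$(5) is exactly that $x\C{}y$ forces the extra identity $(a\lor a')\land(b\lor b')=((a\lor a')\land b)\lor((a\lor a')\land b')$. This is precisely a (C1)-type distribution of $a\lor a'$ over the pair $\{b,b'\}$, and I would derive it by the same modular manipulation, now combining (C1) with the already available (C2) and (C3); having established (1)$\Leftrightarrow$(2)--(4) first, one may use whichever of the flipped clauses is most convenient.

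The main obstacle is the flip of (C3), i.e. the implication (1)$\Rightarrow$(2): unlike (C1) and (C2), clause (C3) couples $a\land a'$ with $\{b,b'\}$ while its mate couples $b\land b'$ with $\{a,a'\}$, so turning one into the other cannot be reduced to a symmetric substitution and requires both (C1) and (C2) simultaneously, with the Kleene condition used on both sides. An attractive alternative, suggested by the placement of Lemmas~\ref{lem:auxcom2} and~\ref{Jonsson}, is to bypass the clause-by-clause bookkeeping entirely by proving that $x\C{}y$ is equivalent to the subalgebra generated by $\{a,b\}$ being distributive (a Kleene subalgebra): symmetry would then be automatic, since the subalgebras generated by $\{a,b\}$, $\{b,a\}$, $\{a',b\}$ and $\{a,b'\}$ coincide (closure under ${}'$ is Lemma~\ref{lem:auxcom2}), and (5) would be a single instance of distributivity. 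The cost of this route is that one must first match the meet-distributivity conditions of Lemma~\ref{Jonsson} against (C1)--(C3), which is itself a modular computation.
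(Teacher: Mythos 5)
Your structural analysis coincides exactly with the paper's proof: the paper likewise observes that each of the three substitutions fixes two of the clauses (C1)--(C3) and flips one, proves one implication in each equivalence and gets the converse by involutivity, obtains (1)$\Leftrightarrow$(4) by composing (1)$\Leftrightarrow$(2) with (1)$\Leftrightarrow$(3), and drives everything by modularity, the Kleene condition, and Lemma~\ref{aux:com1}. All of your bookkeeping claims check out (under $x\mapsto x'$ only (C1) changes, under $y\mapsto y'$ only (C2), under the swap only (C3)), and your identification of (1)$\Rightarrow$(2) as the hard case is exactly right.

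The gap is that the proof stops where the paper's work begins. For the (C3) flip you must actually derive $y\land y'=((y\land y')\land x)\lor((y\land y')\land x')$ from $x\C{}y$; the paper does this by first establishing the two intermediate identities $y\lor(x\land x'\land y')=(x\lor y)\land(y\lor x')$ and $x\lor(y\land y'\land x')=(x\lor y)\land(x\lor y')$ through a chain of roughly eight modular rewrites (each step inserting or absorbing a term of the form $x\land x'$ or $y\land y'$ licensed by the Kleene condition, and one step invoking Lemma~\ref{aux:com1}), and only then meets with $y\land y'$ to conclude. ``Collapse it by repeated use of the modular law'' does not certify that such a collapse exists, and this computation is the entire substance of the lemma; the same applies to the identity $(x\lor x')\land(y\lor y')=((x\lor x')\land y)\lor((x\lor x')\land y')$ in (5), for which the paper again needs a specific three-step modular manipulation. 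Finally, be careful with your proposed alternative via Lemma~\ref{Jonsson}: the equivalence of $x\C{}y$ with distributivity of $\mathbf{Sg}(x,y)$ is the theorem \emph{following} this lemma in the paper, and its proof uses the present lemma (it invokes $b\C{}a$, $b'\C{}a$, and item (5)); taking that route would require an independent verification of the hypotheses of J\'onsson's lemma, which is at least as much modular computation as the direct argument and risks circularity if done carelessly.
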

\begin{proof}$(1)\Leftrightarrow(2)$. We prove $(1)\Rightarrow(2)$, since the converse direction can be handled symmetrically. Note that $y\lor(x\land x'\land y')=y\lor(x\land x'\land y')\lor(x\lor x'\land y)=y\lor(x\land x').$ Moreover, one has $(x\lor y)\land(x\lor y')=y\lor(x'\land(x\lor y))=y\lor((x\land x')\lor(x'\land y))=y\lor(x\land x')$, by Lemma \ref{aux:com1}. We conclude $y\lor(x\land x'\land y')=(x\lor y)\land(y\lor x')$. So, we compute
\begin{align*}
x\lor(y\land y'\land x') =& x\lor(x\land x'\land y')\lor(y\land y'\land x')\\
=& x\lor((x'\land y')\land(y\lor(x\land x'\land y')))\\
=& x\lor((x'\land y')\land(x\lor y)\land(y\lor x'))\\
=& x\lor((x'\land y')\land(x\lor y))\\
=& (x\lor y)\land(x\lor(x'\land y'))\\
=& (x\lor y)\land(x\lor x')\land(x\lor y')\\
=& (x\lor y)\land(x\lor x'\lor y)\land(x\lor x'\lor y')\land(x\lor y')\\
=& (x\lor y)\land(x\lor y').
\end{align*}
Therefore, we have $(y\land y')\land((y\land y'\land x)\lor(y\land y'\land x'))=(y\land y')\land(y\land y')\land(x\lor(y\land y'\land x'))=(y\land y')\land(x\lor y)\land(x\lor y')=y\land y'$. So $y\land y'\leq(y\land y'\land x)\lor(y\land y'\land x')$, i.e. $y\land y' = (y\land y'\land x)\lor(y\land y'\land x')$.\\
$(1)\Leftrightarrow(3)$. Again, we prove $(1)\Rightarrow(3)$ since the converse holds symmetrically. We only need to prove $y'\land(x\lor x')=(y'\land x)\lor(y'\land x')$. Indeed, we have $y'\land(x\lor x')=y'\land(x\lor x')\land(x\lor y')=y'\land(x\lor(x'\land y'))=(x'\land y')\lor(y'\land x)$, by modularity and Lemma \ref{aux:com1}.\\
$(1)\Leftrightarrow(4)$. Just note that $x\C{}y$ iff $y\C{}x$ iff $y\C{}x'$ iff $x'\C{}y$.\\
Finally, as regards the non-trivial direction of $(1)\Leftrightarrow(5)$, note that $((x\lor x')\land y)\lor((x\lor x')\land y')=(x\lor x')\land(((x \lor x')\land y)\lor y').$ Moreover, $y'\lor((x\lor x')\land y)=y'\lor((x\lor x'\lor y')\land y)=(x\lor x'\lor y')\land(y\lor y')=y'\lor((x\lor x')\land(y\lor y'))$. Therefore, $(x\lor x')\land(((x \lor x')\land y)\lor y')=(x\lor x')\land(y'\lor((x\lor x')\land(y\lor y')))=((x\lor x')\land(y\lor y'))\lor(y'\land(x\lor x'))=(x\lor x')\land(y\lor y')$. 
\end{proof}
We are into position for stating and proving the final result of this section.
\begin{theorem}Let $\alga\in\mathcal{MPKL}$. For any $x,y\in A$, the following are equivalent:
\begin{enumerate}
\item $x\C{}y$;
\item $\mathbf{Sg}(x,y)$ is a Kleene lattice. 
\end{enumerate}
\end{theorem}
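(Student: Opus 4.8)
The plan is to prove the two implications separately, the reverse one being immediate and the forward one resting on B.~J\'onsson's Lemma \ref{Jonsson}. For $(2)\Rightarrow(1)$, if $\mathbf{Sg}(x,y)$ is a Kleene lattice then it is distributive, so conditions (C1) and (C2) of Definition \ref{def:commmodular} hold as plain instances of the distributive law among the elements $x,y$ of that sublattice, while (C3) follows from distributivity together with the Kleene condition $x\land x'\leq y\lor y'$: indeed $((x\land x')\land y)\lor((x\land x')\land y')=(x\land x')\land(y\lor y')=x\land x'$. Hence $x\C{}y$.

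For $(1)\Rightarrow(2)$, suppose $x\C{}y$. I would set $\algb:=\mathbf{Sg}(x)$ and $\algc:=\mathbf{Sg}(y)$, whose universes are $\{0,1,x,x',x\land x',x\lor x'\}$ and $\{0,1,y,y',y\land y',y\lor y'\}$ respectively, each being a Kleene (hence distributive) lattice. By Lemma \ref{lem:auxcom2} the bounded sublattice $\algd$ of $\alga^{\ell}$ generated by $B\cup C$ is closed under ${}'$, so $\algd=\mathbf{Sg}(x,y)$ and it suffices to show $\algd$ is distributive. Since $\alga^{\ell}$ is modular and $\algb^{\ell},\algc^{\ell}$ are distributive sublattices, Lemma \ref{Jonsson} applies and reduces the goal to the ``mixed'' distributivity conditions $(x_{1}\lor x_{2})\land w=(x_{1}\land w)\lor(x_{2}\land w)$ for $x_{1},x_{2}\in B$, $w\in C$, together with the dual condition obtained by interchanging the roles of $B$ and $C$.

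The decisive observation is that in $\algb$ every element other than $x$ and $x'$ is comparable to all the others, so the \emph{only} incomparable pair is $\{x,x'\}$; whenever $x_{1}\leq x_{2}$ the displayed identity holds automatically. Thus J\'onsson's first family collapses to the four instances $w\in\{y,y',y\land y',y\lor y'\}$ of $(x\lor x')\land w=(x\land w)\lor(x'\land w)$, and symmetrically the second family to the four instances of $(y\lor y')\land v=(y\land v)\lor(y'\land v)$ for $v\in\{x,x',x\land x',x\lor x'\}$. Each instance is then read off from the hypothesis: the cases $w=y$ and $w=y'$ are (C2) for the pairs $(x,y)$ and $(x,y')$, the latter legitimate because $x\C{}y'$ by Lemma \ref{lem:commutativity}; the case $w=y\land y'$ reduces, via $y\land y'\leq x\lor x'$, to (C3) for the pair $(y,x)$ (using $y\C{}x$); and the case $w=y\lor y'$ follows by combining (C1) for $(x,y)$ and for $(x',y)$ with clause (5) of Lemma \ref{lem:commutativity}. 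The symmetric family is dispatched identically after replacing $x\C{}y$ by $y\C{}x$. I expect the only genuine labour to lie in this bookkeeping of the eight coordinate identities and in confirming that every commuting pair invoked ($x\C{}y'$, $x'\C{}y$, $y\C{}x$, and the join identity) is indeed furnished by the equivalences of Lemma \ref{lem:commutativity}; conceptually the argument is driven entirely by the single incomparable pair $\{x,x'\}$ matching the three defining clauses of commutativity.
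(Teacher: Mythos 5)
Your proposal is correct and follows essentially the same route as the paper: the forward direction is dispatched by applying J\'onsson's Lemma \ref{Jonsson} to the distributive sublattices $\mathbf{Sg}(x)$ and $\mathbf{Sg}(y)$, reducing the mixed distributivity conditions to the single incomparable pair $\{x,x'\}$ (resp.\ $\{y,y'\}$), verifying the four resulting instances via Lemma \ref{lem:commutativity}, and invoking Lemma \ref{lem:auxcom2} for closure under the involution. Your write-up is in fact slightly more explicit than the paper's at two points: the derivation of (C3) in $(2)\Rightarrow(1)$ from distributivity plus the Kleene condition, and the bookkeeping of which clause of Lemma \ref{lem:commutativity} covers which of the four cases.
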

\begin{proof}$(2)\ra(1)$ is clear. Conversely, Let $a,b\in A$ such that $a\C{}b$. One has that $\mathbf{Sg}(a)=(\{a,a',a\land a',a\lor a',0,1\},\land,\lor,{}',0,1)$ and $\mathbf{Sg}(b)=(\{b,b',b\land b',b\lor b',0,1\},\land,\lor,{}',0,1)$ are distributive. We show that, for any $x,y\in\mathrm{Sg}(a)$, and any $z\in\mathrm{Sg}(b)$, one has $(x\lor y)\land z = (x\land z)\lor(y\land z)$. First, if $x=y$, then our conclusion holds trivially. Therefore, we assume also $x\ne y$. If $x$ or $y$ are in $\{0,1,a\land a',a\lor a'\}$, or $z\in\{0,1\}$, then the statement is straightforward. Therefore, let us assume that $x,y\in\{a,a'\}$ and $z\notin\{0,1\}$. We consider the following cases according to $z$.
\begin{itemize}
\item $z=b\land b'$. In this case $(a\lor a')\land (b\land b')=b\land b'=(b\land b'\land a)\lor(b\land b'\land a')$ is ensured upon noticing that, by Lemma \ref{lem:commutativity}, one has $b\C{}a$.
\item $z=b$ or $z=b'$. Then, again, the statement follows upon noticing that, by Lemma \ref{lem:commutativity}, $b\C{}a$ and $b'\C{}a$.
\item $z=b\lor b'$. In this case distributivity over $a\lor a'$ follows by Lemma \ref{lem:commutativity}(5).
\end{itemize}
Showing that any $x\in\mathrm{Sg}(a)$ distributes over any join of elements in $\mathrm{Sg}(b)$ can be handled similarly, and so it is left to the reader. By Lemma \ref{Jonsson} we conclude that $\mathrm{Sg}(b)\cup\mathrm{Sg}(a)$ generates a distributive sublattice of $\alga^{\ell}$ and so, by Lemma \ref{lem:auxcom2}, also a distributive subalgebra of $\alga$.
\end{proof}
Naturally, one might wonder if our notion of commutativity might be somehow simplified. Indeed, when dealing with orthomodular lattices, it turns out that conditions $(C1)$ and $(C2)$ are equivalent. Nevertheless, the next example shows that, in the wider framework of sp-orthomodular lattices, this no longer holds.
\begin{example}Let us consider the sp-orthomodular lattice depicted in Figure \eqref{fig:dmndfixpoint}:
\begin{equation}
\xymatrix{
&1\ar@{-}[d]\ar@{-}[dl]\ar@{-}[dr]&\\
a\ar@{-}[dr]&b=b'\ar@{-}[d]&a'\ar@{-}[dl]\\
&0&
}\label{fig:dmndfixpoint}
\end{equation}
Note that $a\land(b\lor b')=a\land b=0=(a\land b)\lor(a\land b')$ as well as $(a\land a')=0=((a\land a')\land b)\lor((a\land a')\land b')$. However, $b=b\land(a\lor a')\ne 0=(b\land a)\lor(b\land a')$. 
\end{example}
Finally, we observe that, in the framework of sp-orthomodular lattices, the Foulis-Holland theorem (Theorem \ref{fuli}) fails. In particular, we exhibit a modular sp-orthomodular lattice $\alga$ and a triple $\{a,b,c\}\subseteq A$ such that $a\C{}b,a\C{}c,b\C{}c$ but $\{a,b,c\}$ \emph{does not} generate a distributive sub-lattice of $\alga^{\ell}$.
\begin{example}Let us consider the sp-orthomodular lattice $$\alga=(\{0,a,b,c,d,a',b',c',1\},\land,\lor,{}',0,1)$$ depited in Figure \eqref{failureFH}:
\begin{equation}
\xymatrix{
&1\ar@{-}[d]\ar@{-}[dl]\ar@{-}[dr]&\\
a'\ar@{-}[dr]&b'\ar@{-}[d]&c'\ar@{-}[dl]\\
&d=d'\ar@{-}[dl]\ar@{-}[dr]\ar@{-}[d]&\\
a\ar@{-}[dr]&b\ar@{-}[d]&c\ar@{-}[dl]\\
&0&
}\label{failureFH}
\end{equation}
An easy check shows that $a\C{}b$ and $a\C{}c$. However, $\{a,b,c\}$ does not generate a distributive sublattice of $\alga^\ell$.
\end{example}
\section{Paraconsistent partial referential matrices}\label{sec:paraconspartrefmatr}
In this section we introduce the class of paraconsistent partial referential matrices. These structures generalize \emph{partial referential matrices} (PRMs) outlined for the first time by J. Czelakowski in \cite{Czela1981b}.

Taking the cue from the intuition that quantum experimental propositions concerning the measure of an observable $A$ over a state $\psi$ should be considered \emph{meaningful} (i.e., amenable of a truth's evaluation) only in case $\psi$ is \emph{dispersion free}, i.e it is an eigenvector of $A$, \cite{Czela1981b} introduces PRMs as a general logical framework to deal with propositions which need not be everywhere defined but for certain states. A PRM is nothing but a triple $(I,\alga,\{D_{i}\}_{i\in I})$, where $I$ is non-empty set of states, $\alga$ is an ``algebra'' of partial propositions of the form $(D,f)\in\mathcal{P}(I)\times B_{2}^{D}$ assigning to any $i\in D$ a truth value in the two-element Boolean algebra $\algb_{2}$, and $D_{i}$ ($i\in I$) is the set of true propositions $(D,f)$ which are meaningful w.r.t. $i$ (i.e., $i\in D$). Interestingly enough, any atomic atomistic orthomodular poset can be regarded as the algebraic reduct of a PRM (see \cite[p. 138]{Czela1981b}). Therefore, PRMs make orthomodular quantum structures amenable of a really neat interpretation.\\
In the sequel, we introduce paraconsistent partial referential matrices as a generalization of Czelakowski's framework to deal with partial propositions whose truth values range in the set $\{0,\frac{1}{2},1\}$. In the light of results outlined in previous sections, it won't be surprising to find that any unsharp orthogonal poset $\alga$ yields the algebraic reduct of a paraconsistent partial referential matrix. Even more, if $\alga\in\SPO$, then $\alga$ turns out to be \emph{orthoisomorphic} to a pseudo-Kleene lattice of partial propositions. Consequently, a further link between logic and the algebra $\mathbf{E}(\mathcal{H})$ of effects over a separable Hilbert space $\mathcal{H}$ is established.

Let us start with the main concept we are dealing with in this section. 
\begin{definition}\label{def: parparrefmatr}A \emph{paraconsistent partial referential matrix} is a structure $\mathcal{A}=(I,\alga,\{D_{i}\}_{i\in I})$ such that:
\begin{enumerate}
\item $I$ is a set of indexes;
\item $\alga=({A,\mathbb{C},\neg,\lor,\mathbb{O},\mathbb{I}})$, called the \emph{algebraic reduct} of $\mathcal{A}$, is such that:
\begin{enumerate}[(i)]
\item $A$ consists of pairs $(C,f)$, where $C\subseteq I$ and $f\in K_{3}^{C}$.
\item $\mathbb{C}\subseteq A^{2}$ is a reflexive and symmetric binary relation of \emph{commeasurability} such that for, any $(C,f),(D,g)\in A$, $(C,f)\mathbb{C}(D,g)$ implies $C\cap D\ne \emptyset$; 
\item  $\mathbb{I}=(I,\chi_{I})$ and $\mathbb{O}=(I,\chi_{\emptyset})$;
\item If $(C,f)\in A$, then $(C,\neg f)\in A$, where, for any $x\in C$, $\neg f(x)=\neg(f(x))$ in $\K_{3}$;
\item If $(C,f)\mathbb{C}(D,g)$, then there exists a \emph{unique} $(Z,h)$ such that $D\cap C\subseteq Z$ and, for any $x\in D\cap C$, $h(x)=f(x)\lor g(x)$;
\item $\neg:A\to A$ is such that $\neg(C,f)=(C,\neg f)$;
\item If $(C,f)\mathbb{C}(D,g)$, then $(C,f)\lor(D,g)$ is defined and equals the $(Z,h)$ from item (v);
\item Any pair $(C,f),(D,g)$ such that $(C,f)\mathbb{C}(D,g)$ generates a Kleene sub-lattice of $\alga$, in the sense that polynomials in $(C,f),(D,g)$ are mutually commeasurable and, moreover, they form a Kleene lattice w.r.t. $\{\neg,\lor,0,1\}$.
\end{enumerate}
\item For any $i\in I$, $D_{i}=\{(C,f)\in A:i\in C\text{ and }f(i)=1\}$.
\end{enumerate}
\end{definition}
In particular, note that $\mathbb{O}=\neg\mathbb{I}$.

Let $\mathcal{A}=(I,\alga,\{D_{i}\}_{i\in I})$ be a paraconsistent partial referential matrix. For any $(C,f),(D,f)\in A$, set \[(C,f)\precsim(D,g)\text{ iff }C\cap D\ne\emptyset\text{ and }f(i)\leq g(i),\text{ for any }i\in C\cap D.\]

From now on, we will denote by $\mathbf{Fm}_{\mathcal{L}}$ the absolutely free algebra in the signature $\mathcal{L}=\{\lor,\neg,0,1\}$ generated by an infinite countable set of variables $Var$.
\begin{definition} Let $\alga$ be the algebraic reduct of a partial referential matrix. a partial mapping $h:Fm_{\mathcal{L}} \to A$ is said to be a \emph{partial homomorphism} if the following hold:
\begin{enumerate}
\item $Var\subseteq Dom(h)$;
\item $\varphi\in Dom(h)$ iff $\neg\varphi\in Dom(h)$;  $h(\neg\varphi)=\neg h(\varphi)$.
\item $\varphi\lor\psi\in Dom(h)$ iff $\varphi,\psi\in Dom(h)$ and $h(\varphi)\mathbb{C}h(\psi)$;  $h(\varphi\lor\psi)=h(\varphi)\lor h(\psi)$.
\item $h(1)=\mathbb{I}$.
\end{enumerate}
\end{definition}
For any $\mathcal{A}=(I,\alga,\{D_{i}\}_{i\in I})$, we denote by $\mathrm{Hom}^{P}(\mathbf{Fm}_{\mathcal{L}},\alga)$ the class of partial homomorphisms from $\mathbf{Fm}_{\mathcal{L}}$ to $\alga$. The proof of the next remark is customary, so it is left to the reader.  
\begin{remark}\label{rem:evalextenduniqueparhom}For any $\mathcal{A}=(I,\alga,\{D_{i}\}_{i\in I})$, any mapping $h:Var\to A$ can be extended to a \emph{unique} $h^{*}\in\mathrm{Hom}^{P}(\mathbf{Fm}_{\mathcal{L}},\alga)$.
 
\end{remark}

Let $\mathcal{A}=(I,\alga,\{D_{i}\}_{i\in I})$ be a paraconsistent partial referential matrix. Let us define a relation $\vdash_{\mathcal{A}}\subseteq\mathscr{P}(\mathrm{Fm}_{\mathcal{L}})\times \mathrm{Fm}_{\mathcal{L}}$ upon setting, for any $\Gamma\cup\{\varphi\}\subseteq Fm_{\mathcal{L}}$, $\Gamma\vdash_{\mathcal{A}}\varphi$ if and only if the following condition holds:
\begin{small}
  \[\forall h\in\mathrm{Hom}^{P}(\mathbf{Fm}_{\mathcal{L}},\alga)\ \forall i\in I,\text{ if }\Gamma\subseteq Dom(h)\text{ and }h(\Gamma)\subseteq D_{i},\text{ then }\varphi\in Dom(h)\text{ and }h(\varphi)\in D_{i}.\]
\end{small}
We observe that any paraconsistent partial referential matrix $\mathcal{A}=(I,\alga,\{D_{i}\}_{i\in I})$ induces a \emph{consequence relation} in the sense that, for any $\Gamma\cup\Delta\cup\{\varphi\}\subseteq\mathrm{Fm}_{\mathcal{L}}$:
\begin{itemize}
\item  $\Gamma\vdash_{\mathcal{A}}\gamma$, for any $\gamma\in\Gamma$ (reflexivity);
\item If $\Gamma\vdash_{\mathcal{A}}\varphi$ and $\Gamma\subseteq\Delta$, then $\Delta\vdash_{\mathcal{A}}\varphi$ (monotonicity);
\item If $\Gamma\vdash_{\mathcal{A}}\delta$, for any $\delta\in\Delta$, and $\Delta\vdash_{\mathcal{A}}\varphi$, then $\Gamma\vdash_{\mathcal{A}}\varphi$ (transitivity).
\end{itemize}
However, $\vdash_{\mathcal{A}}$ need not be preserved under \emph{substitutions} (i.e. \emph{structural}, cf. \cite[Definition 1.5]{Font16}). So, $\vdash_{\mathcal{A}}$ might not be a \emph{logic} in the proper sense, as shown by the next example.
\begin{example}Let us consider the paraconsistent partial referential matrix $$\mathcal{A}=(I=\{r,s\},\alga,\{D_{r},D_{s}\})$$ where $\alga=(A,\mathbb{C},\lor,\neg,\mathbb{O},\mathbb{I})$  is such that
\begin{enumerate}
\item $\mathbb{I}$ resp. $\mathbb{O}$ is the constant $1$ resp.  the constant $0$ function over $I$;
\item $A=\{\mathbb{I},\mathbb{O},(\{r\},f_{a}),(\{r\},f_{\neg a}),(\{s\},f_{b})\}$, where $f_{a}(r)=1$, $f_{\neg a}(r)=0$, $f_{b}(r)=\frac{1}{2}=f_{\neg b}(r)$.
\item $\mathbb{C}=\{((C,f),(D,g))\in A^{2}:C\cap D\ne\emptyset\}$; and, for any $((C,f),(D,g))\in \mathbb{C}$:
\begin{equation}
 (C,f)\lor(D,g)=
\begin{cases}
\mathbb{I} & \text{ if }(C,f)=\mathbb{I}\text{ or }(D,g)=\mathbb{I}\\
(C,f) & \text{ if }(D,g)=\mathbb{O};\\
(D,g) & \text{ if }(C,f)=\mathbb{O};\\
\mathbb{I} & \text{ if }(C,f)=(\{r\},f_{a}), (D,g)=(\{r\},f_{\neg a})\\
\mathbb{I} & \text{ if }(C,f)=(\{r\},f_{\neg a}), (D,g)=(\{r\},f_{a})\\
(C,f) & \text{ if }(C,f)=(D,g),\\
\text{undefined}, & \text{otherwise}
\end{cases}
\end{equation}
\item $D_{r}=\{\mathbb{I},(\{r\},f_{a})\}$ and $D_{s}=\{\mathbb{I}\}$.
\end{enumerate}
Note that, in particular, $\lor$ is not defined for the pair $(\{r\},f_{a}),(\{s\},f_{b})$.\\ 
Now, it can be seen that, for any $x,y\in Var$, it holds that 
\[x\vdash_{\mathcal{A}}(x\lor\neg x)\lor y.\]
For if $h\in\mathrm{Hom}^{P}(\mathbf{Fm}_{\mathcal{L}},\alga)$, then, for any $i\in I$, $h(x)\in D_{i}$ implies $h(x\lor\neg x)=\mathbb{I}$ and so $h(x\lor\neg x)\mathbb{C}h(y)$. Therefore,  $(x\lor\neg x)\lor y\in Dom(h)$ and $h((x\lor\neg x)\lor y)\in D_{i}$. Nevertheless, let us consider an arbitrary substitution $\sigma$ such that $\sigma: x\mapsto x$, $y\mapsto u\lor v$, where $u,v$ are fresh variables. Let $h$ be such that $h: x\mapsto (\{r\},f_{a}),u\mapsto (\{r\},f_{a}), v\mapsto (\{s\},f_{b})$. Then the partial homomorphism $h^{*}$ induced by $h$ satisfies $x\in Dom(h^{*})$ and $h^{*}(x)\in D_{r}$ but $(x\lor\neg x)\lor(u\lor v)\notin Dom(h^{*})$, since $u\lor v\notin Dom(h^{*})$. We conclude $x\nvdash_{\mathcal{A}}(x\lor\neg x)\lor(u\lor v)$. 
\end{example}
However, if one aims at obtaining a full-fledged logic, another route is possible. Let  $\mathcal{A}=(I,\alga,\{D_{i}\}_{i\in I})$ be a paraconsistent partial referential matrix.
Let us define a relation $\vdash_{\mathcal{A}}^{*}\subseteq\mathscr{P}(\mathrm{Fm}_{\mathcal{L}})\times \mathrm{Fm}_{\mathcal{L}}$ upon setting, for any $\Gamma\cup\{\varphi\}\subseteq \mathrm{Fm}_{\mathcal{L}}$,  
\[\Gamma\vdash_{\mathcal{A}}^{*}\varphi\text{ iff }\Gamma\vdash_{\mathcal{A}}\varphi\text{ and }V(\varphi)\subseteq V(\Gamma)\] where, for any $\Gamma\subseteq\mathrm{Fm}_{\mathcal{L}}$, $V(\Gamma)=\bigcup_{\gamma\in\Gamma}V(\gamma)$ and $V(\alpha)$ is the set of propositional variables occurring in $\alpha$, for any $\alpha\in\mathrm{Fm}_{\mathcal{L}}$.
\begin{proposition}For any paraconsistent partial referential matrix $\mathcal{A}=(I,\alga,\{D_{i}\}_{i\in I})$, $\langle \mathbf{Fm}_{\mathcal{L}},\vdash_{\mathcal{A}}^{*}\rangle$ is a sentential logic.
\end{proposition}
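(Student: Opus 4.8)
The plan is to recall that, in the sense of \cite[Definition~1.5]{Font16}, a sentential logic is exactly a pair $\langle\mathbf{Fm}_{\mathcal{L}},\vdash\rangle$ whose consequence relation $\vdash$ is reflexive, monotone, transitive (a Tarskian consequence relation) \emph{and} invariant under substitutions (\emph{structural}). The relation $\vdash_{\mathcal{A}}$ was already seen to satisfy the first three conditions but, as the preceding example shows, it fails structurality; the whole point of the side condition $V(\varphi)\subseteq V(\Gamma)$ defining $\vdash_{\mathcal{A}}^{*}$ is precisely to repair this. So I would first check that the three consequence-relation properties survive the restriction, and then devote the real work to structurality.

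For reflexivity, monotonicity and transitivity I would simply combine the corresponding facts for $\vdash_{\mathcal{A}}$ with elementary manipulations of the variable sets. If $\varphi\in\Gamma$ then $\Gamma\vdash_{\mathcal{A}}\varphi$ and trivially $V(\varphi)\subseteq V(\Gamma)$. Monotonicity is immediate since $\Gamma\subseteq\Delta$ gives both $\Delta\vdash_{\mathcal{A}}\varphi$ and $V(\varphi)\subseteq V(\Gamma)\subseteq V(\Delta)$. For transitivity, from $\Gamma\vdash_{\mathcal{A}}^{*}\delta$ for every $\delta\in\Delta$ and $\Delta\vdash_{\mathcal{A}}^{*}\varphi$ one gets $\Gamma\vdash_{\mathcal{A}}\varphi$ by transitivity of $\vdash_{\mathcal{A}}$, while $V(\varphi)\subseteq V(\Delta)=\bigcup_{\delta\in\Delta}V(\delta)\subseteq V(\Gamma)$ because each $V(\delta)\subseteq V(\Gamma)$; hence $\Gamma\vdash_{\mathcal{A}}^{*}\varphi$.

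The heart of the argument is structurality. Fix $\Gamma\vdash_{\mathcal{A}}^{*}\varphi$ and a substitution $\sigma\colon\mathbf{Fm}_{\mathcal{L}}\to\mathbf{Fm}_{\mathcal{L}}$; I want $\sigma[\Gamma]\vdash_{\mathcal{A}}^{*}\sigma(\varphi)$. The variable clause is easy: since $V(\varphi)\subseteq V(\Gamma)$, every variable occurring in $\sigma(\varphi)$ occurs in some $\sigma(\gamma)$, so $V(\sigma(\varphi))\subseteq V(\sigma[\Gamma])$. For the consequence clause, take $h\in\mathrm{Hom}^{P}(\mathbf{Fm}_{\mathcal{L}},\alga)$ and $i\in I$ with $\sigma[\Gamma]\subseteq Dom(h)$ and $h(\sigma[\Gamma])\subseteq D_{i}$. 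The classical move of composing a valuation with $\sigma$ is unavailable here, because $h\circ\sigma$ need not be a partial homomorphism — $\sigma(x)$ may fail to lie in $Dom(h)$ for some variable $x$. Instead I set $U:=\{x\in Var:\sigma(x)\in Dom(h)\}$, define $g\colon Var\to A$ by $g(x)=h(\sigma(x))$ for $x\in U$ and arbitrarily (say $g(x)=\mathbb{I}$) otherwise, and extend $g$ to the unique $g^{*}\in\mathrm{Hom}^{P}(\mathbf{Fm}_{\mathcal{L}},\alga)$ granted by Remark~\ref{rem:evalextenduniqueparhom}. The key lemma, proved by induction on formula complexity using the biconditional clauses in the definition of a partial homomorphism, is that for every $\psi$ with $V(\psi)\subseteq U$ one has $\psi\in Dom(g^{*})\iff\sigma(\psi)\in Dom(h)$, and in that case $g^{*}(\psi)=h(\sigma(\psi))$; the negation and disjunction steps transfer verbatim because $\sigma$ commutes with the connectives and the commeasurability test $g^{*}(\chi_{1})\,\mathbb{C}\,g^{*}(\chi_{2})$ coincides with $h(\sigma(\chi_{1}))\,\mathbb{C}\,h(\sigma(\chi_{2}))$ by the induction hypothesis.

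I would then observe that $Dom(h)$ is closed under subformulas (again from the biconditional clauses), so that $\sigma(\gamma)\in Dom(h)$ forces every $\sigma(x)$ with $x\in V(\gamma)$ into $Dom(h)$; thus $V(\Gamma)\subseteq U$, each $V(\gamma)\subseteq U$, and the lemma gives $\gamma\in Dom(g^{*})$ with $g^{*}(\gamma)=h(\sigma(\gamma))\in D_{i}$. Hence $\Gamma\subseteq Dom(g^{*})$ and $g^{*}(\Gamma)\subseteq D_{i}$, so $\Gamma\vdash_{\mathcal{A}}\varphi$ yields $\varphi\in Dom(g^{*})$ and $g^{*}(\varphi)\in D_{i}$. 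Now the variable restriction pays off: since $V(\varphi)\subseteq V(\Gamma)\subseteq U$, the lemma applies to $\varphi$ and lets me cross back, concluding $\sigma(\varphi)\in Dom(h)$ with $h(\sigma(\varphi))=g^{*}(\varphi)\in D_{i}$. As $h$ and $i$ were arbitrary, $\sigma[\Gamma]\vdash_{\mathcal{A}}\sigma(\varphi)$, and together with the variable clause this gives $\sigma[\Gamma]\vdash_{\mathcal{A}}^{*}\sigma(\varphi)$. I expect this final transfer to be the delicate step: it is exactly the point where the condition $V(\varphi)\subseteq V(\Gamma)$ is indispensable, since without it $\varphi$ could lie in $Dom(g^{*})$ (by the dummy values assigned to its extraneous variables) while $\sigma(\varphi)\notin Dom(h)$, reproducing the breakdown of structurality exhibited in the preceding example.
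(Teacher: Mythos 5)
Your proposal is correct and follows essentially the same route as the paper: both reduce the problem to structurality, build the auxiliary valuation $g(x)=h(\sigma(x))$ on the relevant variables, extend it to a partial homomorphism $g^{*}$, and use the condition $V(\varphi)\subseteq V(\Gamma)$ to transfer membership in $Dom$ back from $g^{*}$ to $h$. The only difference is that you make explicit the induction lemma ($\psi\in Dom(g^{*})$ iff $\sigma(\psi)\in Dom(h)$ with matching values) that the paper's terser proof leaves implicit.
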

\begin{proof}
Showing that $\vdash_{\mathcal{A}}^{*}$ is a consequence relation is straightforward and so it is left to the reader. Let $\sigma$ be an arbitrary substitution and suppose that $\Gamma\vdash_{\mathcal{A}}^{*}\varphi$. Since $V(\varphi)\subseteq V(\Gamma)$, one has $V(\sigma(\varphi))\subseteq V(\sigma(\Gamma))$. Now, let $h\in \mathrm{Hom}^{P}(\mathbf{Fm}_{\mathcal{L}},\alga)$ and let $i\in I$. Suppose that $\sigma(\Gamma)\subseteq Dom(h)$ and $h(\sigma(\Gamma))\subseteq D_{i}$. Then, for any $x\in V(\Gamma)=V(\Gamma\cup\{\varphi\})$,  $\sigma(x)\in Dom(h)$. Now, let $g:Var\to A$ be the mapping such that $g(x)=h\sigma(x)$, if $x\in V(\Gamma)$, and $g(x)=h(x)$ otherwise. $g$ can be extended to a partial homomorphism $g^{*}$ such that $\Gamma\subseteq Dom(g^{*})$ and $g^{*}(\Gamma)=h\sigma(\Gamma)\subseteq D_{i}$. Therefore, we conclude that $\varphi\in Dom(g^{*})$ and $g^{*}(\varphi)\in D_{i}$. But then $\sigma(\varphi)\in Dom(h)$ and $h\sigma(\varphi)=g^{*}(\varphi)\in D_{i}$.
\end{proof}
Since investigating consequence relations introduced so far goes beyond the scope of the present paper, we postpone an inquiry into their properties to a future work.

\subsection{Unsharp orthogonal posets as paraconsistent partial referential matrices}

In what follows, we show that any unsharp orthogonal poset yields the algebraic reduct of a paraconsistent partial referential matrix. Consequently, any UOP determines an algebra of partial propositions which, in view of the above discussion, yields  a consequence relation with a quite intuitive flavor. 

Let $\alga\in\mathcal{KL}$. Recall that $F\subseteq A$ is said to be a lattice-filter provided that, for any $x,y\in A$, $x,y\in F$ iff $x\land y\in F$. For any $x\in A$, we set $\uparrow{x}:=\{y\in A:x\leq y\}$. Obviously, $\uparrow{x}$ is a lattice-filter of $\alga$. Furthermore, $F$ is said to be \emph{prime} provided that, for any $x,y\in A$, $x\lor y\in F$ iff $x\in F$ or $y\in F$. From now on, given $\alga\in\mathcal{KL}$. we will denote by $\mathrm{F}_{\alga}$ and $\mathrm{PF}_{\alga}$ the family of lattice-filters and prime lattice-filters over $\alga$, respectively. Whenever the reference to the underlying Kleene lattice $\alga$ will be clear, we will write simply $\mathrm{F}$ and $\mathrm{PF}$ in place of $\mathrm{F}_{\alga}$ resp. $\mathrm{PF}_{\alga}$. The following fact is well known (see e.g. \cite{Balbes}).
\begin{lemma}[Prime Filter Theorem]\label{thm:primfiltthm}Let $\alga\in\mathcal{KL}$. If $F\in\mathrm{F}_{\alga}$ is such that $x\notin F$, then there exists $G\in\mathrm{PF}_{\alga}$ such that $F\subseteq G$ and $x\notin P$.
\end{lemma}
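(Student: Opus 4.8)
The plan is to invoke Zorn's Lemma to extract a filter that is maximal among those extending $F$ and avoiding $x$, and then to show that this maximality, together with the distributivity of $\alga$, forces the filter to be prime. First I would consider the family
\[
\mathcal{P}=\{H\in\mathrm{F}_{\alga}:F\subseteq H\text{ and }x\notin H\},
\]
partially ordered by inclusion. This family is non-empty, since $F\in\mathcal{P}$ by hypothesis. Moreover, the union $\bigcup\mathcal{C}$ of any chain $\mathcal{C}\subseteq\mathcal{P}$ is again a lattice-filter: if $a,b\in\bigcup\mathcal{C}$ then, by linearity of the chain, $a$ and $b$ lie in a common member, whence $a\land b\in\bigcup\mathcal{C}$; conversely $a\land b\in\bigcup\mathcal{C}$ yields $a,b\in\bigcup\mathcal{C}$ at once. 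Since $x$ belongs to no member of $\mathcal{C}$, we have $x\notin\bigcup\mathcal{C}$, so $\bigcup\mathcal{C}\in\mathcal{P}$ is an upper bound for $\mathcal{C}$. By Zorn's Lemma, $\mathcal{P}$ admits a maximal element $G$.

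It then remains to prove that $G$ is prime. Since lattice-filters are upward closed (from the ``only if'' half of the defining biconditional, $z\leq w$ gives $z=z\land w$, so $z\in G$ forces $w\in G$), one direction of primeness is automatic; the crux is to show that $a\lor b\in G$ implies $a\in G$ or $b\in G$. I would argue by contradiction: suppose $a\lor b\in G$ but $a\notin G$ and $b\notin G$. For $z\in A$, write $G_{z}$ for the lattice-filter generated by $G\cup\{z\}$, which is readily checked to be $\{w\in A:g\land z\leq w\text{ for some }g\in G\}$. Since $a\notin G$ while $a\in G_{a}$, the filter $G_{a}$ strictly contains $G$, so by maximality of $G$ we must have $x\in G_{a}$, i.e. $g_{1}\land a\leq x$ for some $g_{1}\in G$. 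Symmetrically, $g_{2}\land b\leq x$ for some $g_{2}\in G$.

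The decisive step uses distributivity. Put $g:=g_{1}\land g_{2}$, which lies in $G$ as filters are closed under meets. Then $g\land a\leq x$ and $g\land b\leq x$, hence
\[
g\land(a\lor b)=(g\land a)\lor(g\land b)\leq x,
\]
the equality being precisely the distributive law available in $\alga\in\mathcal{KL}$. But $g\in G$ and $a\lor b\in G$, so $g\land(a\lor b)\in G$; since $G$ is upward closed and $g\land(a\lor b)\leq x$, we conclude $x\in G$, contradicting $G\in\mathcal{P}$. Therefore $G$ is prime, i.e. $G\in\mathrm{PF}_{\alga}$, and it satisfies $F\subseteq G$ and $x\notin G$, as required. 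The only mildly delicate point is the explicit description of the generated filter $G_{z}$ and the verification that it is genuinely closed under meets; this is routine, and distributivity does all the real work in the displayed identity.
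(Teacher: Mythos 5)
Your proof is correct: it is the standard Zorn's Lemma argument (maximalize among filters extending $F$ and omitting $x$, then use distributivity of the Kleene lattice to show the maximal element is prime), which is precisely the classical proof the paper implicitly relies on when it cites this as a well-known fact from Balbes--Dwinger without proving it. The only implicit assumption worth flagging is that $F$ (hence $G$) is nonempty, which is needed for $a\in G_{a}$; this holds in all of the paper's applications, where $F$ is of the form $\uparrow a$.
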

In the sequel, we will freely make use of Lemma \ref{thm:primfiltthm} without further reference.

From now on, we consider an arbitrary $\alga\in\mathcal{UOP}$ and let $\mathfrak{S}(\alga)=\{\K_{i}\}_{i\in I}$. Let $\mathbb{F}_{i}:=\{F\subseteq K_{i}:  F\in\mathrm{PF}_{\K_{i}}\}$, for any $i\in I$. For any $a\in A$, let $$I_{a}=\{i\in I:a\in K_{i}\}.$$
Now let us consider, for any $a\in A$, $\mathbb{F}_{a}=\bigcup_{i\in I_{a}}\mathbb{F}^{}_{i}$. Moreover, for any $a\in A$, let $f_{a}:\mathbb{F}_{a}\to K_{3}$ be the mapping such that, for any $F\in \mathbb{F}_{a}$:
  \begin{equation}
    f_{a}(F) =
    \begin{cases}
      1 &  \text{if}\ a\in F\text{ and }a'\notin F \\
      0 & \text{if}\ a'\in F\text{ and } a\notin F \\
      \frac{1}{2} & \text{otherwise}
    \end{cases}
  \end{equation}
Now, let $\overline{A}=\{(\mathbb{F}_{a},f_{a}):a\in A\}$. For any $(\mathbb{F}_{a},f_{a}),(\mathbb{F}_{b},f_{b})\in\overline{A}$, we set $(\mathbb{F}_{a},f_{a})\mathbb{C}(\mathbb{F}_{b},f_{b})$ provided that $\mathbb{F}_{a}\cap \mathbb{F}_{b}\neq\emptyset$.

Furthermore, let us define the following operations: $\neg(\mathbb{F}_{a},f_{a}):=(\mathbb{F}_{a'},f_{a'}$), and, for any $(\mathbb{F}_{a},f_{a}),(\mathbb{F}_{b},f_{b})\in \overline{A}$ such that $(\mathbb{F}_{a},f_{a})\mathbb{C}(\mathbb{F}_{b},f_{b})$, $$(\mathbb{F}_{a},f_{a})\sqcup(\mathbb{F}_{b},f_{b})=(\mathbb{F}_{a\lor^{\K_{i}}b},f_{a\lor^{\K_{i}}b}),$$  where $\K_{i}$ is an arbitrary Kleene sub-lattice of $\alga$ such that $a,b\in K_{i}$. Note that, since any $\K_{i}\in\mathfrak{S}(\alga)$ is a Kleene sub-lattice, if $a,b\in\K_{i}\cap\K_{j}$, then $a\lor^{\K_{i}}b = a\lor^{\K_{j}}b$, for any $i,j\in I$. In view of this fact, whenever no danger of confusion will be impending, unnecessary superscripts will be omitted.  Also, set $$\mathbb{O}:=(\mathbb{F}_{0},f_{0})\text{ and }\mathbb{I}:=(\mathbb{F}_{1},f_{1}).$$
The next lemma ensures that $\sqcup$ and $\neg$ are well-defined.
\begin{lemma}\label{lem:embedd}For any $(\mathbb{F}_{a},f_{a}),(\mathbb{F}_{b},f_{b})\in \overline{A}$, $$(\mathbb{F}_{a},f_{a})=(\mathbb{F}_{b},f_{b})\text{ iff }a=b.$$
\end{lemma}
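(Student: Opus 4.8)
The right-to-left implication is immediate from the definitions, so the plan is to establish injectivity: assuming $(\mathbb{F}_{a},f_{a})=(\mathbb{F}_{b},f_{b})$, which unpacks as $\mathbb{F}_{a}=\mathbb{F}_{b}$ together with $f_{a}=f_{b}$ on this common domain, I would deduce $a=b$. The strategy proceeds in two moves: first locate a \emph{single} Kleene sub-lattice of $\alga$ containing both $a$ and $b$, and then separate $a$ from $b$ by the prime filters of that sub-lattice, on which $f_{a}$ and $f_{b}$ must agree.

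For the first move, recall that $\K_{a}:=(\{a,a',a\land a',a\lor a',0,1\},\land,\lor,{}',0,1)$ is a Kleene sub-lattice of $\alga$, so $\K_{a}=\K_{i_{0}}$ for some $i_{0}\in I_{a}$ and hence $\mathbb{F}_{i_{0}}\subseteq\mathbb{F}_{a}=\mathbb{F}_{b}$. If $a\notin\{0,1\}$, then $\uparrow^{\K_{a}}a$ is a filter of $\K_{a}$ not containing $0$, so by the Prime Filter Theorem (Lemma \ref{thm:primfiltthm}) there is $G\in\mathrm{PF}_{\K_{a}}$ with $a\in G$. Since $G\in\mathbb{F}_{i_{0}}\subseteq\mathbb{F}_{b}=\bigcup_{j\in I_{b}}\mathbb{F}_{j}$, we get $G\in\mathbb{F}_{j}$ for some $j\in I_{b}$; by definition of $\mathbb{F}_{j}$ this forces $G\subseteq K_{j}$ and $b\in K_{j}$, whence $a\in G\subseteq K_{j}$ as well. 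Thus $a,b\in K_{j}$ for a common $\K_{j}\in\mathfrak{S}(\alga)$. The cases $a\in\{0,1\}$ are trivial, since $0$ and $1$ lie in every Kleene sub-lattice, so one may take $\K_{j}=\K_{b}$.

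For the second move, fix such a $\K_{j}$ and any $F\in\mathbb{F}_{j}$. Since $a,b\in K_{j}$ we have $j\in I_{a}\cap I_{b}$, so $\mathbb{F}_{j}\subseteq\mathbb{F}_{a}=\mathbb{F}_{b}$, and $f_{a}(F),f_{b}(F)$ are read off from the membership of $a,a'$ (resp. $b,b'$) in $F$ exactly as inside $\K_{j}$. The assignment $h_{F}\colon\K_{j}\to\K_{3}$, $h_{F}(c)=f_{c}(F)$, is a Kleene-lattice homomorphism: one checks $h_{F}(c')=\neg h_{F}(c)$ and preservation of $\lor,\land$ directly from primeness of $F$ (routine). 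Because $f_{a}=f_{b}$ on the common domain, $h_{F}(a)=h_{F}(b)$ for every $F\in\mathbb{F}_{j}$. Now, since $\mathcal{KL}$ is generated by $\K_{3}$, the lattice $\K_{j}$ embeds into a power of $\K_{3}$, so homomorphisms $\K_{j}\to\K_{3}$ separate points; moreover every such homomorphism $h$ equals $h_{G}$ for $G=h^{-1}(\{x\in K_{3}:x\geq\frac{1}{2}\})\in\mathrm{PF}_{\K_{j}}$. Hence $h_{F}(a)=h_{F}(b)$ for all prime filters $F$ of $\K_{j}$ forces $a=b$.

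The main obstacle is the first move: in a general unsharp orthogonal poset $a$ and $b$ need not a priori share a Kleene sub-lattice, and the crux is that a prime filter of $\K_{a}$ witnessing $a$ is compelled, through the equality $\mathbb{F}_{a}=\mathbb{F}_{b}$, to live inside a Kleene sub-lattice containing $b$. A secondary subtlety in the second move is that separating $a$ and $b$ merely \emph{as sets} by a single prime filter is insufficient (the value $\frac{1}{2}$ can coincide when $a\in F$ but $b\notin F$); one genuinely needs separation by $\K_{3}$-valued homomorphisms, which is precisely what generation of $\mathcal{KL}$ by $\K_{3}$ supplies.
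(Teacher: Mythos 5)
Your proof is correct, and it shares the paper's skeleton --- first force $a$ and $b$ into a common Kleene sub-lattice $\K_{j}$ by exploiting $\mathbb{F}_{a}=\mathbb{F}_{b}$, then separate them via prime filters of $\K_{j}$ --- but the separation step is carried out by a genuinely different device. The paper argues by contradiction with a single hand-picked prime filter: assuming w.l.o.g.\ $a\nleq b$, it extends $\uparrow a$ to a prime $F$ omitting $b$, notes that $f_{a}(F)=f_{b}(F)$ forces the common value to be $\frac{1}{2}$ with $a,a'\in F$ and $b,b'\notin F$, and then contradicts primeness through the Kleene inequality $a\land a'\leq b\lor b'$. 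You instead quantify over \emph{all} prime filters, identify them with homomorphisms into $\K_{3}$, and invoke the subdirect representation of Kleene lattices over $\K_{3}$ to separate points; this is slicker, and it correctly isolates why separating $a$ from $b$ merely as sets by one prime filter would not suffice (the value $\frac{1}{2}$). What the paper's route buys is self-containedness: it needs only the Prime Filter Theorem, not the structure theory of $\mathcal{KL}$. Two small remarks. First, your claim that $h_{F}$ preserves $\lor$ does not follow from primeness of $F$ alone --- one also needs the Kleene condition $c\land c'\leq d\lor d'$ (exactly as in the paper's later verification of clause (v) for $\overline{\alga}$), e.g.\ to exclude the situation $c,c'\in F$ while $d,d'\notin F$; in fact this claim is not load-bearing for your conclusion, since the separation argument only uses the converse correspondence, namely that every homomorphism $h\colon\K_{j}\to\K_{3}$ equals $h_{G}$ for the prime filter $G=h^{-1}(\{\frac{1}{2},1\})$, which you do verify. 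Second, your treatment of the first move is actually more explicit than the paper's, which in the case $I_{a}\cap I_{b}=\emptyset$ merely asserts $F\notin\mathbb{F}_{b}$ without spelling out the reason you give.
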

\begin{proof}Concerning the non trivial direction, suppose towards a contradiction that $a\ne b$ but $(\mathbb{F}_{a},f_{a})=(\mathbb{F}_{b},f_{b})$. If $I_{a}\cap I_{b}= \emptyset$, then $a,b\notin\{0,1\}$. Upon considering $F\in\mathrm{PF}_{\K_{i}}$, for some $\K_{i}$ such that $i\in I_{a}$ obtained e.g. by extending $\uparrow{a}$ to a prime lattice-filter, we have that $F\notin\mathbb{F}_{b}$, and so $\mathbb{F}_{a}\ne\mathbb{F}_{b}$, a contradiction. Therefore, let us suppose w.l.o.g. that $I_{a}\cap I_{b}\ne \emptyset$. Let $i\in I_{a}\cap I_{b}$. Since $a\ne b$, then one must have $a\nleq b$ or $b\nleq a$. Suppose w.l.o.g. that $a\nleq b$. Therefore, by Lemma \ref{thm:primfiltthm}, $\uparrow {a}\in\mathrm{F}_{\K_{i}}$ can be extended to a prime lattice-filter $F\in\mathbb{F}_{i}$ containing $a$ but not $b$. Now, by hypothesis $f_{a}(F)=f_{b}(F)$. Since $a\in F$, one must have $f_{a}(F)\in\{1,\frac{1}{2}\}$. If $f_{b}(F)=f_{a}(F)=1$, then $b\in F$, a contradiction. Therefore, $f_{a}(F)=f_{a}(F)=\frac{1}{2}$, $a,a'\in F$ and $b,b'\notin F$. But then we reach again a contradiction, since $a,a'\in F$ implies that $b\lor b'\geq a\land a'\in F$ and, since $F$ is prime, $b\in F$ or $b'$ in $F$.
\end{proof}
\begin{lemma}$\overline{\alga}=(\overline{A},\mathbb{C},\sqcup,\neg,\mathbb{O},\mathbb{I})$ is the algebraic reduct of a paraconsistent partial referential matrix.
\end{lemma}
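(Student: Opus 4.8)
The plan is to use Lemma \ref{lem:embedd} to identify $\overline A$ with $A$ through the bijection $a\mapsto(\mathbb{F}_a,f_a)$, thereby reducing the statement to a verification of clauses (i)--(viii) of Definition \ref{def: parparrefmatr} for the structure $\overline{\alga}$ with index set $J:=\bigcup_{i\in I}\mathbb{F}_i$. Several clauses are immediate. Clause (i) holds by construction. For (iii), since $0,1\in K_i$ for every Kleene sublattice $\K_i$, we have $I_0=I_1=I$, whence $\mathbb{F}_0=\mathbb{F}_1=J$; moreover every prime filter $F$ is proper and contains the top, so $f_1(F)=1$ and $f_0(F)=0$ for all $F$, i.e.\ $f_1=\chi_J$ and $f_0=\chi_\emptyset$. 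For (iv) and (vi) I would note that each $\K_i$ is closed under ${}'$, so $I_{a'}=I_a$ and $\mathbb{F}_{a'}=\mathbb{F}_a$, and then check pointwise in $\K_{3}$ that $f_{a'}=\neg f_a$ (the three values $f_a(F)\in\{0,\tfrac12,1\}$ match $\neg$ on $\K_{3}$); this gives $\neg(\mathbb{F}_a,f_a)=(\mathbb{F}_a,\neg f_a)\in\overline A$. Finally, $\mathbb{C}$ is symmetric by definition and reflexive because each $\K_i$ admits a prime filter (Lemma \ref{thm:primfiltthm}), so $\mathbb{F}_a\neq\emptyset$; the implication required in (ii) is built into the definition of $\mathbb{C}$.

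The engine driving the remaining clauses is the following fact, which I would isolate first: every prime filter $F$ of a Kleene lattice $\K$ yields a homomorphism $\varphi_F\colon\K\to\K_{3}$ given by $\varphi_F(x)=f_x(F)$. Compatibility with ${}'$, $0$ and $1$ is routine; the only delicate point is preservation of $\lor$. Here the \emph{Kleene condition} is exactly what is needed: the sole configuration that could break additivity is $x,x'\in F$ together with $y,y'\notin F$, but $x,x'\in F$ forces $x\land x'\in F$, and since $x\land x'\le y\lor y'$ this puts $y\lor y'\in F$, so by primeness $y\in F$ or $y'\in F$, a contradiction. Ruling out this case yields $\varphi_F(x\lor y)=1\Leftrightarrow\varphi_F(x)\lor\varphi_F(y)=1$, and the dual statement for $\land$ follows via ${}'$. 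This lemma is what makes the valuations add up.

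Next I would establish the correspondence $(\mathbb{F}_a,f_a)\mathbb{C}(\mathbb{F}_b,f_b)$ iff $a,b$ lie in a common Kleene sublattice, and deduce (v), (vii), (viii) from it. The easy direction is clear: if $a,b\in K_k$ then $\emptyset\neq\mathbb{F}_k\subseteq\mathbb{F}_a\cap\mathbb{F}_b$. Granting a common block $\K$, the element $c:=a\lor^{\K}b$ is independent of the choice of $\K$, since in any Kleene sublattice $\lor$ agrees with $({}'\land{}')'$ and block-meets coincide with $\alga$-meets, so $c=a\lor^{\alga}b$; taking $(Z,h)=(\mathbb{F}_c,f_c)$ we get $\mathbb{F}_a\cap\mathbb{F}_b\subseteq\mathbb{F}_c$ and, for $F$ in the intersection, $f_c(F)=\varphi_F(a\lor^{\K}b)=\varphi_F(a)\lor\varphi_F(b)=f_a(F)\lor f_b(F)$ by the homomorphism lemma, which is (v); uniqueness comes from Lemma \ref{lem:embedd}, and (vii) holds since this is precisely how $\sqcup$ was defined. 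For (viii), the Kleene sublattice generated by $a,b$ inside $\K$ maps, via the operation-preserving injection $a\mapsto(\mathbb{F}_a,f_a)$, isomorphically onto the subalgebra generated by $(\mathbb{F}_a,f_a),(\mathbb{F}_b,f_b)$, which is therefore a Kleene lattice of pairwise commeasurable elements.

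The hard part will be the converse direction of the correspondence: from a single prime filter $F$ shared by a sublattice $\K_i\ni a$ and a sublattice $\K_j\ni b$ one must produce a Kleene sublattice containing both $a$ and $b$, so that $\sqcup$ is genuinely defined on every $\mathbb{C}$-related pair. My approach would be to pass to sharp elements: since a prime filter contains exactly one of $s,s'$ for each sharp $s$, the inclusion $F\subseteq K_i\cap K_j$ forces $\mathrm{Sh}(\K_i)=\mathrm{Sh}(\K_j)$, so $a$ and $b$ sit over a common Boolean skeleton; I would then use this, together with $\varphi_F$, to exhibit the required common sublattice (reducing, case by case, to the situation where $a$ or $b$ is sharp, which is immediate). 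This step is the only one that genuinely exploits the interaction between the two blocks and the shared filter, and it is where the argument must be carried out with care.
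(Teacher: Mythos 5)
Your overall architecture follows the paper's: identify $\overline{A}$ with $A$ via Lemma \ref{lem:embedd}, dispatch the easy clauses, and reduce clause (v) to the statement that prime filters evaluate joins correctly. Your homomorphism lemma $\varphi_{F}\colon\K\to\K_{3}$ is a clean repackaging of the paper's case analysis: you correctly isolate the only dangerous configuration ($x,x'\in F$ together with $y,y'\notin F$) and the fact that the Kleene condition $x\land x'\le y\lor y'$ plus primeness rules it out, which is exactly the content of cases (a)--(b) in the paper's verification of (v).

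The genuine gap is the step you yourself flag as ``the hard part'', and it cannot be completed along the route you sketch. The claim that a shared prime filter $F\subseteq K_{i}\cap K_{j}$ (with $a\in K_{i}$, $b\in K_{j}$) forces a common Kleene sublattice containing $a$ and $b$ is false, and passing to sharp elements does not repair it: in the paper's own $\algb_{8}$, take $a=x$ and $b=y$. The only Kleene sublattices containing $x$ (resp.\ $y$) are $\{0,z',x,x',z,1\}$ (resp.\ $\{0,z',y,y',z,1\}$); since $x\lor x'=y\lor y'=z<1$, the singleton $\{1\}$ is a prime filter of both, both have sharp part $\{0,1\}$, and yet $x$ and $y$ generate all of $\algb_{8}$, which is not distributive. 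So a shared filter together with equal Boolean skeletons buys you nothing. The paper sidesteps the issue by reading commeasurability as $I_{a}\cap I_{b}\neq\emptyset$ from the outset (``$(\mathbb{F}_{a},f_{a})\mathbb{C}(\mathbb{F}_{b},f_{b})$, i.e.\ $I(a)\cap I(b)\neq\emptyset$''), that is, by treating the prime-filter collections of distinct Kleene sublattices as disjoint indexed families rather than literal sets of subsets; under that reading there is no converse direction to prove, while under your literal set-theoretic reading the statement you need is simply false. A second, smaller gap: uniqueness in (v) does not follow from Lemma \ref{lem:embedd} alone, since injectivity of $a\mapsto(\mathbb{F}_{a},f_{a})$ does not exclude a second element $c\ne a\lor b$ with $\mathbb{F}_{a}\cap\mathbb{F}_{b}\subseteq\mathbb{F}_{c}$ and $f_{c}=f_{a}\lor f_{b}$ on the intersection; the paper needs a separate separation argument (extend $\uparrow(a\lor b)$ to a prime filter omitting $c$, and symmetrically) to pin $c$ down.
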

\begin{proof}
We show that item 2 of Definition \ref{def: parparrefmatr} hold. (i)-(iv),(vi),(vii) are obvious. Concerning (v), suppose that $(\mathbb{F}_{a},f_{a})\mathbb{C}(\mathbb{F}_{b},f_{b})$, i.e. $I(a)\cap I(b)\ne\emptyset$. We show that $\sqcup$ satisfies the requirements of (v). Note that $\mathbb{F}_{a}\cap \mathbb{F}_{b}\subseteq \mathbb{F}_{a\lor b}$, for any $i\in I(a)\cap I(b)$. Next we show that, for any $F\in \mathbb{F}_{a}\cap \mathbb{F}_{b}$ such that $F\in\mathrm{PF}_{\K_{i}}$ ($i\in I(a)\cap I(b)$), $f_{a\lor b}(F)=f_{a}(F)\lor f_{b}(F)$. If $f_{a}(F)=1$ or $f_{b}(F)=1$,  then $a\in F$ and $a'\notin F$ or $b\in F$ and $b'\notin F$. This fact clearly implies $a\lor b\in F$ and $(a\lor  b)'=a'\land b'\notin F$. So, we can assume that neither $f_{a}(F)=1$ nor $f_{b}(F)=1$. If $f_{a}(F)=0=f_{b}(F)$, then $(a\lor^{\K_{i}}b)'=a'\land b'\in F$. Now, if $a\lor b\in F$, then $a\in F$ or $b\in F$, since $F$ is prime. A contradiction. Therefore, $a\lor b\notin F$, i.e. $f_{a\lor b}(F)=0$. Now, suppose that $f_{a}(F)\ne 0$ or $f_{b}(F)\ne 0$. So $f_{a}(F)=\frac{1}{2}$ or $f_{b}(F)=\frac{1}{2}$. Let us assume without loss of generality that $f_{a}(F)=\frac{1}{2}$. This means that 
(a) $a, a'\in F$; or (b) $a,a'\notin F$.\\
(a) One has that $a\leq a\lor b\in F$. Now, if $b'\notin F$, one has that $b\in F$, since $b\lor b'\geq a\land a'\in F$. So, $f_{b}(F)=1$, against our assumptions.
Therefore, $b'\in F$ and $(a\lor b)'\in F$.\\
(b) If $a,a'\notin F$, then clearly $(a\lor b)'\notin F$. If $a\lor b\in F$, then we must have $b\in F$. If $b'\in F$, then $a\lor a'\in F$ and so $a\in F$ or $a'\in F$, contradicting our hypothesis. So $b'\notin F$. But then $f_{b}(F)=1$, again a contradiction. We conclude $a\lor b,(a\lor b)'\notin F$.\\
To prove uniqueness, suppose that $(\mathbb{F}_{c},f_{c})$ is such that $\mathbb{F}_{a}\cap\mathbb{F}_{b}\subseteq \mathbb{F}_{c}$ and, for any $F\in \mathbb{F}_{a}\cap \mathbb{F}_{b}$, $f_{c}(F)=f_{a}(F)\lor f_{b}(F)$. We show that $(\mathbb{F}_{c},f_{c})=(\mathbb{F}_{a\lor b},f_{a\lor b})$, i.e. $a\lor b=c$, by Lemma \ref{lem:embedd}. Let $i\in I(a)\cap I(b)$ be such that $a,b,c\in K_{i}$. Suppose towards a contradiction that $a\lor b\ne c$. If $a\lor b\not\leq c$, consider the filter $\uparrow{a\lor b}\in\mathrm{F}_{\K_{i}}$. $\uparrow a\lor b$ can be extended to a prime lattice-filter $F$ such that $c\notin F$. Since $f_{c}(F)=f_{a}(F)\lor f_{b}(F)=f_{a\lor b}(F)$, then $f_{a\lor b}(F)\ne 1$, otherwise $c\in F$, a contradiction. So one must have $f_{a\lor b}(F)=\frac{1}{2}$, i.e. $a\lor b,(a\lor b)'\in F$. But then $c\lor c'\in F$ and $c'\in F$, since $F$ is prime. But this means that $f_{c}(F)=0$, which is impossible. We conclude that $a\lor b\leq c$. Similarly, we prove that $c\leq a\lor b$.\\
(viii) Just note that, for any $(\mathbb{F}_{a},f_{a}), (\mathbb{F}_{a},f_{a})\in \overline{A}$ such that $(\mathbb{F}_{a},f_{a})\mathbb{C}(\mathbb{F}_{a},f_{a})$, and any polynomial $p(x,y)$ in the language $\{\lor,',0,1\}$, ($\ast$) $p((\mathbb{F}_{a},f_{a}), (\mathbb{F}_{a},f_{a}))=(\mathbb{F}_{p(a,b)},f_{p(a,b)})$ and, of course, for any pair of polynomials $p_{1}(x,y),p_{2}(x,y)$, $I(p_{1}(a,b))\cap I(p_{2}(a,b))\ne\emptyset$ and so $\mathbb{F}_{p_{1}(a,b)} \cap \mathbb{F}_{p_{2}(a,b)}\ne\emptyset$. Moreover, by ($\ast$) and Lemma \ref{lem:embedd}, it is easily seen that axioms of Kleene lattices hold.\\
Now, upon considering, for any $F\in\mathbb{F}=\bigcup_{i\in I}\mathbb{F}_{i}$, $D_{F}:=\{(\mathbb{F}_{a},f_{a}):F\in\mathbb{F}_{a}\text{ and }f_{a}(F)=1\}$, one has that $(\mathbb{F},\overline{\alga},\{D_{F}\}_{F\in\mathbb{F}})$ is a paraconsistent partial referential matrix.
\end{proof}
Recall that, for any $(\mathbb{F}_{a},f_{a}),(\mathbb{F}_{b},f_{b})\in \overline{A}$, $(\mathbb{F}_{a},f_{a})\precsim(\mathbb{F}_{b},f_{b})$ is short for $(\mathbb{F}_{a},f_{a})\mathbb{C}(\mathbb{F}_{b},f_{b})$ and $f_{a}(F)\leq f_{b}(F)$, for any $F\in\mathbb{F}_{a}\cap\mathbb{F}_{b}$. It is easily seen that $\precsim$ is reflexive and antisymmetric.

\begin{lemma}\label{lem: 1}$(\mathbb{F}_{a},f_{a})\precsim (\mathbb{F}_{b},f_{b})$ implies $a\leq b$. Moreover, if $\alga$ is tame, then the converse holds as well.
\end{lemma}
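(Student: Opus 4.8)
The plan is to prove the two implications separately, in both cases reducing the order relation to the behaviour of the $K_{3}$-valuations $f_{a},f_{b}$ on the \emph{common} prime filters in $\mathbb{F}_{a}\cap\mathbb{F}_{b}$, and to exploit two structural facts. First, a set $F$ which is a prime filter of $\K_{i}$ and of $\K_{j}$ satisfies $F\subseteq K_{i}\cap K_{j}$, so that $c\in F$ forces $c$ (hence $c'$) to lie in \emph{both} Kleene sublattices; this ``membership transfer'' is exactly the device used in the proof of Lemma \ref{lem:embedd}. Second, the Kleene condition $x\land x'\le y\lor y'$ is what glues the two valuations together. Throughout I use the Prime Filter Theorem (Lemma \ref{thm:primfiltthm}) and the injectivity $a\mapsto(\mathbb{F}_{a},f_{a})$ (Lemma \ref{lem:embedd}).

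For the first implication I argue by contraposition: assuming $a\nleq b$ while $\mathbb{F}_{a}\cap\mathbb{F}_{b}\ne\emptyset$, I aim to exhibit a common prime filter $F$ with $f_{a}(F)>f_{b}(F)$, which refutes $(\mathbb{F}_{a},f_{a})\precsim(\mathbb{F}_{b},f_{b})$. The core is a local computation inside a Kleene sublattice $\K$ containing both $a$ and $b$: since $a\nleq^{\K}b$, the Prime Filter Theorem yields a prime filter $F$ of $\K$ with $a\in F$ and $b\notin F$, and $F\in\mathbb{F}_{a}\cap\mathbb{F}_{b}$. If $f_{a}(F)=1$, then $f_{b}(F)<1$ because $b\notin F$, so $f_{a}(F)>f_{b}(F)$. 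If instead $f_{a}(F)=\tfrac12$, then $a,a'\in F$, hence $a\land a'\in F$; the Kleene condition gives $b\lor b'\ge a\land a'$, so $b\lor b'\in F$, and primeness together with $b\notin F$ forces $b'\in F$, i.e. $f_{b}(F)=0<\tfrac12=f_{a}(F)$. Either way $\precsim$ fails.

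For the converse I use tameness twice. First, $a\le b$ and tameness furnish a Kleene block $\K\ni a,b$ with $a\le^{\K}b$; as $\K$ is a nontrivial bounded distributive lattice it has a prime filter, whence the set of prime filters of $\K$ is contained in $\mathbb{F}_{a}\cap\mathbb{F}_{b}$, giving commeasurability. Second, I verify $f_{a}(F)\le f_{b}(F)$ for every $F\in\mathbb{F}_{a}\cap\mathbb{F}_{b}$, where membership transfer and upward-closure do all the work. If $f_{a}(F)=1$, then $a\in F\subseteq K_{j}$ for the witness $\K_{j}\ni b$, so $a\le^{\K_{j}}b$ and $b\in F$; were $b'\in F$ too, then $b'\le a'$ with $b',a'\in K_{i}$ (the witness $\K_{i}\ni a$) would force $a'\in F$, contradicting $a'\notin F$, so $f_{b}(F)=1$. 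If $f_{a}(F)=\tfrac12$ and $f_{b}(F)=0$, then $b'\in F$, $b\notin F$; antitonicity $b'\le a'$ and upward-closure in $\K_{i}$ give $a'\in F$, hence $a\in F$ (as $f_{a}(F)=\tfrac12$), hence $a\in K_{j}$ and $b\in F$, a contradiction. The case $f_{a}(F)=0$ is trivial. Thus $(\mathbb{F}_{a},f_{a})\precsim(\mathbb{F}_{b},f_{b})$.

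The main obstacle is concentrated in the first implication: a common prime filter $F\in\mathbb{F}_{a}\cap\mathbb{F}_{b}$ need not, a priori, be a prime filter of a \emph{single} Kleene sublattice containing both $a$ and $b$, so the local separation step has to be justified rather than assumed. The decisive observation is that $c\in F$ already forces $c\in K_{j}$ whenever $F$ is a prime filter of $\K_{j}$ (since $F\subseteq K_{j}$), so any common prime filter meeting $\{a,a'\}$ automatically exhibits a common Kleene sublattice for $a$ and $b$ and feeds the computation above; the genuinely delicate residual case is the one in which $a,a',b,b'$ all lie outside every common prime filter (so that each such filter evaluates both $a$ and $b$ to $\tfrac12$), and it is precisely here that the Kleene condition must be invoked to exclude $a\nleq b$. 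This is also the point that makes tameness indispensable for the converse: without it $a\le b$ may hold while $a$ and $b$ never share a Kleene block (as for $x\le y$ in $\algb_{8}$), so that $\mathbb{F}_{a}\cap\mathbb{F}_{b}=\emptyset$ and commeasurability, hence $\precsim$, fails outright.
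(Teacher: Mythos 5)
Your converse (tame) direction is correct and is in fact \emph{more} careful than the paper's own argument: the ``membership transfer'' observation ($c\in F\subseteq K_{j}$ whenever $F$ is a prime filter of $\K_{j}$) lets you handle a common prime filter that arises from two \emph{different} Kleene sublattices $\K_{i}\ni a$ and $\K_{j}\ni b$, whereas the paper simply writes $F\in\mathrm{PF}_{\K_{i}}$ for some $i\in I_{a}\cap I_{b}$ and moves on. The first implication, however, contains a genuine gap, and you have located it yourself without closing it. Your separation computation (extend $\uparrow a$ to a prime filter missing $b$, then use $a\land a'\leq b\lor b'$ and primeness to force $f_{b}(F)=0$) is exactly the paper's and is fine \emph{once a single Kleene sublattice containing both $a$ and $b$ is available}; membership transfer supplies one only when some common prime filter meets $\{a,a',b,b'\}$. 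In the residual case you assert that ``the Kleene condition must be invoked to exclude $a\nleq b$'', but no argument is given, and none can be, because the residual case genuinely occurs with $a\nleq b$. Concretely, in $\algb_{8}$ take $a=y$ and $b=x$: every Kleene sublattice containing $y$ (resp.\ $x$) is contained in the hexagon $\{0,z',y,y',z,1\}$ (resp.\ $\{0,z',x,x',z,1\}$), since adjoining the missing pair produces the pentagon $\{z',x,y,x',z\}$; a common prime filter must then be a subset of $\{0,z',z,1\}$, and since $x\lor x'=z$ forces any prime filter through $z$ to contain $x$ or $x'$, the only (nonempty) common prime filter is $\{1\}$, on which $f_{y}$ and $f_{x}$ both take the value $\frac{1}{2}$. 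Hence $(\mathbb{F}_{y},f_{y})\precsim(\mathbb{F}_{x},f_{x})$ while $y\nleq x$.

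You should know that the paper's proof of this implication has the same defect: it opens with ``let $i\in I_{a}\cap I_{b}$'', tacitly assuming that $\mathbb{F}_{a}\cap\mathbb{F}_{b}\neq\emptyset$ yields a common Kleene sublattice for $a$ and $b$, which is precisely what fails above. So the obstacle you isolated is real and lies in the statement (commeasurability as defined is too weak to force $I_{a}\cap I_{b}\neq\emptyset$) rather than in your overall strategy; but as written your proof of the first implication is incomplete at exactly that point, and the appeal to the Kleene condition cannot repair it. The implication is unproblematic, and both your argument and the paper's go through, under the additional hypothesis $I_{a}\cap I_{b}\neq\emptyset$ (for instance when $a$ and $b$ are comparable in a tame $\alga$, which is the situation exploited in Lemma \ref{lem:proclo}); if you want to keep the statement in full generality you must either strengthen the definition of $\mathbb{C}$ or supply the missing argument for the residual case.
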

\begin{proof}
Suppose that $(\mathbb{F}_{a},f_{a})\precsim (\mathbb{F}_{b},f_{b})$. This means that $\mathbb{F}_{a}\cap\mathbb{F}_{b}\ne\emptyset$ and $f_{a}(F)\leq f_{b}(F)$, for any $F\in\mathbb{F}_{a}\cap\mathbb{F}_{b}$. Let $i\in I_{a}\cap I_{b}$. If $a\not\leq b$, then $\uparrow a$ in $\K_i$ can be extended to some prime lattice-filter $F\in\mathbb{F}_{i}$ such that $b\notin F$. However, since $f_{a}(F)\leq f_{b}(F)$, and $f_{a}(F)\in\{1,\frac{1}{2}\}$, we must have that $f_{a}(F)=\frac{1}{2}$ with $a,a'\in F$. But then, by regularity, $b'\in F$, since $b\lor b'\in F$ and $b\notin F$. So $f_{b}(F)=0$, a contradiction. Concerning the moreover part, if $a\leq b$, then there exists $i\in I_{a}\cap I_{b}$ such that $a\leq^{\K_{i}}b$. So $\mathbb{F}_{a}\cap\mathbb{F}_{a}\ne\emptyset$ Now, suppose that there exists $F\in\mathbb{F}_{a}\cap\mathbb{F}_{b}$ such that $f_{a}(F)>f_{b}(F)$, where $F\in\mathrm{PF}_{\K_{i}}$, for some $i\in I(a)\cap I(b)$. Note that if $f_{a}(F)=1$, then $b\in F$ and so one must have $f_{b}(F)=\frac{1}{2}$. But then $b'\in F$ and so $a'\in F$, a contradiction. So one must have $f_{a}(F)=\frac{1}{2}$ and $f_{b}(F)=0$. Reasoning as above, also in this case we reach a contradiction.
\end{proof}
We conclude that any UOP $\alga$ yields the algebraic reduct of a paraconsistent partial referential matrix. As an immediate application of this fact, we have the following corollary.
\begin{corollary}Any type I or type II fuzzy quantum poset may be regarded as the algebraic reduct of a paraconsistent partial referential matrix.
\end{corollary}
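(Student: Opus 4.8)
The plan is to observe that this corollary requires essentially no new work: it is an immediate consequence of two facts already established in the excerpt, one about fuzzy quantum posets and one about arbitrary unsharp orthogonal posets. So the whole proof amounts to chaining those two facts together.

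First I would recall, from the Remark following the definition of a fuzzy quantum poset, that any type II FQP $(\Omega,M)$ — viewed as the structure $(M,\leq,{}^{c},\mathbf{0},\mathbf{1})$ with $\leq$ the inclusion of fuzzy sets and $\mathbf{0}=\mathbf{1}^{c}$ — is an unsharp orthogonal poset, and that the same conclusion holds for type I FQPs since $\perp\subseteq\perp_{F}$ guarantees the closure (and hence the existence of the relevant joins) needed for the \emph{UOP} axioms. Consequently every type I or type II fuzzy quantum poset is a member of $\mathcal{UOP}$.

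Second, I would invoke the construction carried out immediately before the statement: starting from an arbitrary $\alga\in\mathcal{UOP}$ with $\mathfrak{S}(\alga)=\{\K_{i}\}_{i\in I}$, one forms $\overline{\alga}=(\overline{A},\mathbb{C},\sqcup,\neg,\mathbb{O},\mathbb{I})$ out of the prime lattice-filters of the Kleene sublattices, and the preceding lemma shows that $\overline{\alga}$ is the algebraic reduct of a paraconsistent partial referential matrix (indeed $(\mathbb{F},\overline{\alga},\{D_{F}\}_{F\in\mathbb{F}})$ is such a matrix). Combining the two observations, I first read a given type I or type II FQP as an element of $\mathcal{UOP}$, and then apply this general construction to obtain the desired paraconsistent partial referential matrix whose algebraic reduct it realizes.

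There is no genuine obstacle here, since the heavy lifting was done in proving that \emph{every} $\alga\in\mathcal{UOP}$ yields such a reduct; the only point worth a line of verification is that the construction of $\overline{\alga}$ used nothing about $\alga$ beyond its membership in $\mathcal{UOP}$ (in particular it did not assume super-paraorthomodularity or lattice-orderedness), which is indeed the case. Hence the corollary follows at once.
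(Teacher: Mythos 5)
Your proposal is correct and matches the paper's own (implicit) argument exactly: the paper derives the corollary by combining the Remark that type I and type II fuzzy quantum posets are unsharp orthogonal posets with the immediately preceding lemma that every $\alga\in\mathcal{UOP}$ yields the algebraic reduct $\overline{\alga}$ of a paraconsistent partial referential matrix. Your added observation that the construction of $\overline{\alga}$ uses nothing beyond membership in $\mathcal{UOP}$ is accurate and is precisely why the paper labels this an ``immediate application.''
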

Moreover, when dealing with tame UOPs, we have some more.  
\begin{lemma}\label{lem:precsimpartordtame} Let $\alga\in\mathcal{UOP}$. If $\alga$ is tame then $\precsim$ over $\overline{\alga}$ is transitive, i.e. it is a partial order.
\end{lemma}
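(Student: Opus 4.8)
The plan is to reduce transitivity of $\precsim$ entirely to transitivity of the underlying order $\leq$ of $\alga$, exploiting the fact that, under the tameness hypothesis, Lemma \ref{lem: 1} upgrades into a two-way correspondence between $\precsim$ and $\leq$. Since it has already been observed (immediately before Lemma \ref{lem: 1}) that $\precsim$ is reflexive and antisymmetric, it suffices to establish transitivity in order to conclude that $\precsim$ is a partial order.

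First I would take $(\mathbb{F}_{a},f_{a}),(\mathbb{F}_{b},f_{b}),(\mathbb{F}_{c},f_{c})\in\overline{A}$ with $(\mathbb{F}_{a},f_{a})\precsim(\mathbb{F}_{b},f_{b})$ and $(\mathbb{F}_{b},f_{b})\precsim(\mathbb{F}_{c},f_{c})$. Applying the first assertion of Lemma \ref{lem: 1} to each of these relations yields $a\leq b$ and $b\leq c$, respectively. Because $(A,\leq,{}',0,1)$ is a bounded poset with antitone involution, its order $\leq$ is transitive, so $a\leq c$.

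Next I would invoke the ``moreover'' clause of Lemma \ref{lem: 1}, which is available precisely because $\alga$ is assumed tame: $a\leq c$ implies $(\mathbb{F}_{a},f_{a})\precsim(\mathbb{F}_{c},f_{c})$. This closes the transitivity argument, and together with the already-noted reflexivity and antisymmetry it gives that $\precsim$ is a partial order, as claimed.

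There is no genuine computational obstacle in this statement; the entire difficulty has been absorbed into Lemma \ref{lem: 1}. The one place where tameness is essential is the backward implication ($a\leq c\Rightarrow(\mathbb{F}_{a},f_{a})\precsim(\mathbb{F}_{c},f_{c})$): without tameness, a comparability $a\leq c$ need not be realized inside a single Kleene block $\K_{i}$, so one cannot in general recover the relation $\precsim$ from $\leq$, and transitivity may then fail. Thus the proof amounts to combining the two directions of Lemma \ref{lem: 1} with the transitivity of $\leq$, and the conclusion is immediate once that bidirectional translation is in hand.
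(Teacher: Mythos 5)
Your argument is correct and is exactly the paper's intended proof: the paper's own justification is simply ``It follows from Lemma \ref{lem: 1},'' and you have spelled out precisely how --- the forward direction of that lemma translates $\precsim$ into $\leq$, transitivity of $\leq$ applies, and the ``moreover'' clause (where tameness is used) translates back. Nothing to add.
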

\begin{proof}
It follows from Lemma \ref{lem: 1}. 
\end{proof}
\begin{lemma}\label{lem:proclo} Let $\alga$ be a tame unsharp orthogonal poset. Then the following hold:
\begin{enumerate}
\item $\mathcal{S}(\overline{\alga})=(\overline{A},\precsim,\neg,\mathbb{O},\mathbb{I})$ is an unsharp orthogonal poset;
\item $\mathcal{S}(\overline{\alga})\cong\alga$.
\end{enumerate}
\end{lemma}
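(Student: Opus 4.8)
The plan is to exhibit a single map that settles both claims at once. Define $\phi\colon A\to\overline{A}$ by $\phi(a)=(\mathbb{F}_{a},f_{a})$. By the very definition of $\overline{A}$ this map is surjective, and by Lemma~\ref{lem:embedd} it is injective. The decisive point is that, because $\alga$ is tame, Lemma~\ref{lem: 1} upgrades to the biconditional $a\leq b$ iff $(\mathbb{F}_{a},f_{a})\precsim(\mathbb{F}_{b},f_{b})$; thus $\phi$ both preserves and reflects the order. Finally $\phi(a')=(\mathbb{F}_{a'},f_{a'})=\neg\phi(a)$ by the definition of $\neg$, while $\phi(0)=\mathbb{O}$ and $\phi(1)=\mathbb{I}$. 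Hence $\phi$ is a bijection transporting $\leq,{}',0,1$ onto $\precsim,\neg,\mathbb{O},\mathbb{I}$.

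First I would prove (1) by transporting the UOP structure along $\phi$. Since $a\leq b$ iff $\phi(a)\precsim\phi(b)$ and $\phi$ is bijective, reflexivity, antisymmetry and transitivity of $\precsim$ are inherited from $\leq$ (this also re-derives Lemma~\ref{lem:precsimpartordtame}), so $(\overline{A},\precsim)$ is a poset with least element $\phi(0)=\mathbb{O}$ and greatest element $\phi(1)=\mathbb{I}$. The map $\neg$ is an involution because $a''=a$, and it is antitone because $\phi(a)\precsim\phi(b)$ forces $a\leq b$, hence $b'\leq a'$, hence $\neg\phi(b)\precsim\neg\phi(a)$. For the join axiom, suppose $\phi(a)\precsim\neg\phi(b)=\phi(b')$; then $a\leq b'$, so $a\lor b$ exists in $\alga$, and I would verify that $\phi(a\lor b)$ is the least upper bound of $\phi(a),\phi(b)$ in $(\overline{A},\precsim)$: it is an upper bound since $a,b\leq a\lor b$, and it is least because every upper bound has, by surjectivity, the form $\phi(c)$ with $a,b\leq c$, whence $a\lor b\leq c$. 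In particular this l.u.b.\ coincides with $\phi(a)\sqcup\phi(b)$.

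For the Kleene condition, I first note that $\phi(a)\lor\neg\phi(a)=\phi(a\lor a')$ exists by the join axiom just established (since $\phi(a)\precsim\neg\neg\phi(a)$ by reflexivity), and dually $\phi(a)\land\neg\phi(a)=\phi(a\land a')$ via the standard identity $x\land x'=(x\lor x')'$ valid in any poset with antitone involution. Then the inequality $a\land a'\leq b\lor b'$ holding in $\alga$ is carried by the order-preserving $\phi$ to $\phi(a)\land\neg\phi(a)\precsim\phi(b)\lor\neg\phi(b)$, which is exactly the Kleene condition. This establishes (1). Claim (2) is then immediate: $\phi$ is an orthohomomorphism (it preserves order, commutes with the involution, and sends $1$ to $\mathbb{I}$), it reflects order, and it is surjective, so by definition it is an orthoisomorphism, giving $\mathcal{S}(\overline{\alga})\cong\alga$.

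The bulk of the work has already been absorbed by the earlier lemmas; in particular the converse direction of Lemma~\ref{lem: 1}, which is precisely where tameness enters, is what makes $\phi$ order-reflecting. The only genuinely new verification is that the existence of $a\lor b$ in $\alga$ yields a bona fide \emph{least} upper bound of $\phi(a),\phi(b)$ in $(\overline{A},\precsim)$, rather than merely an upper bound; this is the step I would treat most carefully, and it hinges exactly on order-reflection together with surjectivity. Everything else is routine transport of structure along an order isomorphism that commutes with the involution.
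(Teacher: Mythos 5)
Your proposal is correct and follows essentially the same route as the paper: the same map $a\mapsto(\mathbb{F}_{a},f_{a})$, injectivity from Lemma~\ref{lem:embedd}, order preservation and reflection from the tame case of Lemma~\ref{lem: 1}, the join verified as a least upper bound by pulling upper bounds back through the bijection, and the Kleene condition transported via the De Morgan dual of the join. Your framing as explicit ``transport of structure'' is a slightly cleaner packaging of the same argument, but the content is identical.
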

\begin{proof}
(1)Observe that, by construction, and since $\precsim$ is a partial order, $\mathcal{S}(\overline{\alga})=(\overline{A},\precsim,\neg,\mathbb{O},\mathbb{I})$ is a bounded poset with antitone involution. Now, suppose that $(\mathbb{F}_{a},f_{a})\precsim\neg(\mathbb{F}_{b},f_{b})=(\mathbb{F}_{b'},f_{b'})$. Then, since $\mathbb{F}_{b}=\mathbb{F}_{b'}$, we have $\mathbb{F}_{a}\cap\mathbb{F}_{b}\ne\emptyset$. So $(\mathbb{F}_{a},f_{a})\mathbb{C}(\mathbb{F}_{b},f_{b})$, $(\mathbb{F}_{a},f_{a})\sqcup(\mathbb{F}_{b},f_{b})$ is defined and $(\mathbb{F}_{a},f_{a})\sqcup(\mathbb{F}_{b},f_{b})=(\mathbb{F}_{a\lor b},f_{a\lor b})$. Clearly, $(\mathbb{F}_{a},f_{a}),(\mathbb{F}_{b},f_{b})\precsim(\mathbb{F}_{a\lor b},f_{a\lor b})$. Moreover, if $(\mathbb{F}_{a},f_{a}),(\mathbb{F}_{b},f_{b})\precsim(\mathbb{F}_{c},f_{c})$, then $a,b\leq c$ and $a\lor b\leq c$ (Lemma \ref{lem: 1}). Therefore, $(\mathbb{F}_{a\lor b},f_{a\lor b})\precsim(\mathbb{F}_{c},f_{c})$ (again by Lemma \ref{lem: 1}). Finally, let us show that Kleene condition holds. $\neg((\mathbb{F}_{a},f_{a})\sqcup \neg(\mathbb{F}_{a},f_{a}))=(\mathbb{F}_{a\land a'},f_{a\land a'})$. Upon noticing that $a\land a'\leq b\lor b'$, the desired conclusion follows by Lemma \ref{lem: 1}.\\
Concerning (2), it is easily seen that the map $a\mapsto(\mathbb{F}_{a},f_{a})$ is an ortho-isomorphism, by Lemma \ref{lem:embedd} and Lemma \ref{lem: 1}. 
\end{proof}
In view of results from Section \ref{sp-orthomodular}, since any sp-orthomodular lattice is a tame UOP, then 
\begin{theorem}Let $\alga\in\mathcal{PKL}$. If $\alga\in\mathcal{SPO}$, then $\mathcal{S}(\overline{\alga})$ is an UOP and $\alga\cong\mathcal{S}(\overline{\alga})$.
\end{theorem}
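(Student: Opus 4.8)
The plan is to recognise the statement as an essentially immediate corollary of the two structural results already in place, namely the pasting characterisation of Theorem \ref{thm:SPOpastings} and the representation Lemma \ref{lem:proclo}. The first step I would take is to observe that membership in $\mathcal{SPO}$ places $\alga$ within the scope of Lemma \ref{lem:proclo}. Indeed, every sp-orthomodular lattice is by definition a pseudo-Kleene lattice, and every pseudo-Kleene lattice is a lattice-ordered unsharp orthogonal poset: all binary joins exist (so in particular those required when $x\leq y'$), and the regularity built into pseudo-Kleene lattices is precisely the Kleene condition $x\land x'\leq y\lor y'$. Hence $\alga\in\mathcal{SPO}$ forces $\alga\in\mathcal{UOP}$.

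The second, and conceptually decisive, step is to upgrade this to tameness. Here I would invoke Theorem \ref{thm:SPOpastings}, which asserts that a pseudo-Kleene lattice lies in $\mathcal{SPO}$ exactly when it is a pasting of its Kleene blocks $\mathfrak{B}(\alga)=\{\K_i\}_{i\in I}$; since being a pasting of one's Kleene blocks is, by definition, what it means for a UOP to be \emph{tame}, we conclude that $\alga$ is a tame unsharp orthogonal poset. With tameness in hand, both assertions of the theorem follow at once from Lemma \ref{lem:proclo} applied to $\alga$: part (1) of that lemma yields that $\mathcal{S}(\overline{\alga})=(\overline{A},\precsim,\neg,\mathbb{O},\mathbb{I})$ is again an unsharp orthogonal poset, and part (2) provides the orthoisomorphism $\mathcal{S}(\overline{\alga})\cong\alga$, realised concretely by the map $a\mapsto(\mathbb{F}_a,f_a)$.

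As for where the difficulty genuinely resides: there is no real obstacle left at this level, because all the substantive work has been absorbed into the lemmas being quoted. The only points that merit a moment's care are the inclusion $\mathcal{SPO}\subseteq\mathcal{UOP}$ and the logical dependence on the right-to-left direction of Theorem \ref{thm:SPOpastings}, which supplies exactly the tameness hypothesis that Lemma \ref{lem:proclo} demands. The heavy lifting, namely verifying that $\precsim$ is a partial order for tame $\alga$ (Lemma \ref{lem:precsimpartordtame}) and that $a\mapsto(\mathbb{F}_a,f_a)$ is injective and order-reflecting (Lemmas \ref{lem:embedd} and \ref{lem: 1}), has already been carried out, so the present theorem is a clean synthesis rather than a new computation.
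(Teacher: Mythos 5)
Your proposal is correct and follows exactly the route the paper takes: the theorem is stated as an immediate consequence of the fact that every sp-orthomodular lattice is a tame unsharp orthogonal poset (via Theorem \ref{thm:SPOpastings}), to which Lemma \ref{lem:proclo} then applies. Your additional care in spelling out the inclusion $\mathcal{SPO}\subseteq\mathcal{UOP}$ and the dependence on the right-to-left direction of the pasting theorem only makes explicit what the paper leaves implicit.
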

In other words, any sp-orthomodular lattice is orthoisomorphic to an unsharp orthogonal poset of partial propositions with ``truth values'' ranging in $\{0,\frac{1}{2},1\}$.

\section{Conclusion and future research}\label{sec:conclusion}
This work has been devoted to investigate sub-classes of unsharp orthogonal posets which are pastings of their maximal Kleene sub-lattices. Specifically, we introduced sp-orthomodular lattices as the class of pseudo-Kleene lattices whose order induces, and it is induced by, the order of its Kleene sub-lattices. It turned out that sp-orthomodular lattices can be considered as genuine quantum structures, since they may be regarded as abstractions of ``concrete'' structures of  experimental propositions concerning unsharp measurements over a physical (quantum) system. Also, we have investigated some of their prominent properties. Interestingly enough, the investigation of sp-orthomodular lattices unveils order-theoretical and algebraic properties of ``pastings'' of Kleene lattices that, when dealing with ortholattices, remain somehow ``hidden''. See, for example, forbidden configurations and properties of localizers.

As it often happens, this work provides more open problems than answers. Besides unanswered questions raised along the paper, we list hereafter some issues one could naturally address.
\begin{itemize}
\item[-] Provide a general theory of commutativity for sp-orthomodular lattices. It is interesting to observe that, when dealing with the general framework we are concerned with in this paper, we have that the intersection of \emph{all} Kleene blocks of an sp-orthomodular lattice $\alga$ need not be a Boolean algebra, as it happens for OMLs.
\item[-] Provide a logic which is complete w.r.t. a suitable matrix semantics having sp-orthomodular lattices as algebraic reducts. For example, providing a non-distributive variant of strong Kleene logic as well as of Priest's Logic of Paradox (see e.g. \cite{Albuquerque}) might be tasks deserving some attention.
\item[-] Introduce a suitable notion of an sp-orthomodular \emph{poset} playing for sp-orthomodular lattices the same role played by OMPs w.r.t. OMLs. This can be accomplished by considering the class of unsharp orthogonal posets satisfying the following condition
\[\text{If }x\leq y,\ \text{then }y\land(x\lor x')\text{ exists and }y\land(x\lor x')=x\lor(x'\land y).\label{spom}\tag{spOM}\] 
It is easily seen that type I and type II fuzzy quantum posets, once regarded as unsharp orthogonal posets, satisfy \eqref{spom}, namely they are examples of sp-orthomodular posets. Therefore, an inquiry into properties of these structures seems to be a worth task which may yield very general results.
\item[-] Investigate the structure theory for sp-orthomodular lattices.
\end{itemize}
\subsection*{Acknowledgements}This work has been funded by the European Union - NextGenerationEU under the Italian Ministry of University and Research (MUR) National
Innovation Ecosystem grant ECS00000041 - VITALITY - CUP C43C22000380007. The research of D.~Fazio has been partly  carried out during a research stay at the Nicolaus Copernicus University of Toru\'n (PL) under the Excellence-Initiative Research Programme. The author thanks R.~Gruszczy\'nski, T. Jarmu\.zek, M. Klonowski, and all the members of the Department of Logic for the valuable support received during the research stay. Finally, the authors gratefully thanks R.~Giuntini, A.~Ledda, F.~Paoli, G. St. John and G.~Vergottini for the insightful discussions and precious suggestions concerning the topics of the present work.

\end{document}